\documentclass[12pt,reqno]{amsart}
\usepackage[utf8]{inputenc}

\title[Promotion and RSK in moon polyominoes]{Piecewise-linear promotion and RSK in rectangles and moon polyominoes}
\author{Joseph Johnson}
\address{Joseph Johnson, Department of Mathematics, North Carolina State University, Raleigh, NC 27695}
\email{jwjohns5@ncsu.edu}

\author{Ricky Ini Liu}
\address{Ricky Ini Liu, Department of Mathematics, University of Washington, Seattle, WA 98195}
\email{riliu@uw.edu}
\thanks{The second author was partially supported by grants from the National Science Foundation (DMS-1700302/2204415 and CCF-1900460).}
\date{\today}

\usepackage{amssymb}
\usepackage{amsfonts}
\usepackage{amsthm}
\usepackage{amsmath}
\usepackage{amscd}
\usepackage{enumerate}
\usepackage{t1enc}
\usepackage[mathscr]{eucal}
\usepackage{indentfirst}
\usepackage{graphicx}
\usepackage{graphics}
\usepackage{hyperref}
\usepackage{pict2e}
\usepackage{epic}
\numberwithin{equation}{section}
\usepackage[margin=2.9cm]{geometry}
\usepackage{epstopdf} 
\usepackage{transparent}
\usepackage{wasysym}

\usepackage{caption}

\usepackage{tikz-cd}
\usepackage{tikz}
\usetikzlibrary{decorations.pathreplacing,calligraphy}

\usepackage{mathdots}
\usepackage{todonotes}
\usepackage{verbatim}

 \colorlet{myGreen}{green!50!gray!120!}

\theoremstyle{plain}
\newtheorem{Th}{Theorem}[section]
\newtheorem{Lemma}[Th]{Lemma}
\newtheorem{Cor}[Th]{Corollary}
\newtheorem{Prop}[Th]{Proposition}

 \theoremstyle{definition}
\newtheorem{Def}[Th]{Definition}

\newtheorem{?}[Th]{Problem}
\newtheorem{Ex}[Th]{Example}
\newtheorem{Obs}[Th]{Observation}

\newcommand{\op}{\mathcal{O}(P)}
\newcommand{\cp}{\mathcal{C}(P)}

\DeclareMathOperator{\Pro}{Pro}
\DeclareMathOperator{\evac}{Evac}
\DeclareMathOperator{\RSK}{RSK}
\DeclareMathOperator{\swpro}{SWPro}

\DeclareMathOperator{\qstab}{QSTAB}
\DeclareMathOperator{\Ehr}{Ehr}

\newcommand{\proid}{\Pro^P}
\newcommand{\idpro}{\Pro^Q}

\newcommand{\ee}{\evac^P \circ \evac^Q}
\newcommand{\idproinv}{\left(\Pro^Q\right)^{-1}}
\newcommand{\ei}{\evac^P}
\newcommand{\ie}{\evac^Q}

\newcommand{\m}{\mathcal{M}}
\newcommand{\n}{\mathcal{N}}

\newcommand{\qm}{\qstab(\mathcal{M})}
\newcommand{\qn}{\qstab(\mathcal{N})}

\newcommand{\concat}{\oslash}


\tikzstyle{wB}=[circle, draw, fill=black, inner sep=0pt, minimum width=4.5pt]
\tikzstyle{wR}=[circle, draw, fill=red, inner sep=0pt, minimum width=4.5pt]
\tikzstyle{wBlue}=[circle, draw, fill=blue, inner sep=0pt, minimum width=4.5pt]

\tikzstyle{bigB}=[circle, draw, fill=black, inner sep=0pt, minimum width=6pt]
\tikzstyle{bigR}=[circle, draw, fill=red, inner sep=0pt, minimum width=6pt]
\tikzstyle{bigBlue}=[circle, draw, fill=blue, inner sep=0pt, minimum width=6pt]

\begin{document}

\maketitle

\begin{abstract} 
We study piecewise-linear and birational lifts of Sch\"utzenberger promotion, evacuation, and the RSK correspondence defined in terms of toggles. Using this perspective, we prove that certain chain statistics in rectangles shift predictably under the action of these maps. We then use this to construct piecewise-linear and birational versions of Rubey's bijections between fillings of equivalent moon polyominoes that preserve these chain statistics, and we show that these maps form a commuting diagram. We also discuss how these results imply Ehrhart equivalence and Ehrhart quasi-polynomial period collapse of certain analogues of chain polytopes for moon polyominoes.
\end{abstract}

\section{Introduction}

For any finite poset $P$, Cameron and Fon-Der-Flaass \cite{cameronfonderflaass}  introduce an action $\rho$ on the order ideals of $P$ called \emph{combinatorial rowmotion} that sends $I$ to the order ideal generated by the minimal elements of $P \setminus I$. Rowmotion has been studied by many authors due to its desirable dynamical properties on certain graded posets \cite{cameronfonderflaass,fonderflaass,grinbergroby1,grinbergroby2,josephroby1,josephroby2,propproby,strikerwilliams,thomaswilliams}. For example, on the product of two chains $R=[r] \times [s]$ (called the \emph{rectangle poset}) the order of rowmotion is $r+s$ \cite{cameronfonderflaass}. The study of rowmotion has also led to a number of results about homomesy and cyclic sieving phenomena---see \cite{einsteinpropp1,josephroby1,musikerroby,propproby,thomaswilliams}.

Rowmotion can also be described as a composition of local involutive transformations called \emph{toggles} \cite{cameronfonderflaass}. Einstein and Propp \cite{einsteinpropp2,einsteinpropp1} encode the order ideals of $P$ as lattice points in $\mathbb{R}_{\geq 0}^{P}$ and define piecewise-linear liftings of combinatorial toggles and rowmotion. They also note that piecewise-linear toggles and rowmotion can be lifted further to birational maps via a procedure called \emph{detropicalization}. (These notions can be studied from a noncommutative perspective as well; see \cite{josephroby2}.) Many properties of rowmotion on the combinatorial level are also true for its piecewise-linear and birational counterparts. For example, the period of piecewise-linear and birational rowmotion on the rectangle poset is still $r+s$ \cite{grinbergroby2}. 

The present authors in \cite[Lemma 4.1]{johnsonliu} proved the following \emph{chain shifting lemma} for rowmotion. Define the \emph{weight} of a path $C$ in a nonnegative labeling $x$ of a rectangle $R$ to be $\sum_{p \in C} x_p$. Then the maximum weight of sets of nonintersecting paths in certain intervals of $R$ shifts downward to a lower interval when we apply $\phi \circ \rho^{-1} \circ \phi^{-1}$, where $\rho$ is (piecewise-linear) rowmotion and $\phi$ is the \emph{transfer map} \cite{stanley2} from the order polytope to the chain polytope. See Figure~\ref{fig:introChainShifting}. (This phenomenon also holds on the noncommutative level for a single path as shown recently in \cite{grinbergroby3}.)

\begin{figure}
    \centering
    \begin{tikzpicture}[scale = 0.75]
    \begin{scope}[rotate=45]
    \draw (0,0) grid (2,3);
    
    \draw [blue, fill = blue, opacity = 0.17] (-0.2,0.8)--(2.2,0.8)--(2.2,3.2)--(-0.2,3.2)--cycle;
    
    \draw [red,ultra thick] (0,1)--(0,3)--(1,3);
    \draw [red,ultra thick] (1,1)--(2,1)--(2,3);
    
    \foreach \x in {0,...,2}{
    \foreach \y in {0,...,3}{
    \node[bigB] at (\x,\y) {};
    }
    }
    
    \node[wR] at (0,1) {};
    \node[wR] at (1,1) {};
    \node[wR] at (2,1) {};
    
    \node[wR] at (0,2) {};
    \node[wR] at (2,2) {};
    
    \node[wR] at (0,3) {};
    \node[wR] at (1,3) {};
    \node[wR] at (2,3) {};
    
    
    
    
    \end{scope}
    
    \node at (3.7,2.2) {$\phi \circ \rho^{-1} \circ \phi^{-1}$};
    \draw [-stealth, ultra thick] (2.7,1.768)--(4.7,1.768);
    
    \begin{scope}[rotate=45, shift = {(5.5,-5.5)}]
    \draw (0,0) grid (2,3);
    
    \draw [blue, fill = blue, opacity = 0.17] (-0.2,-0.2)--(2.2,-0.2)--(2.2,2.2)--(-0.2,2.2)--cycle;
    
    \draw [red,ultra thick] (0,0)--(0,1)--(1,1)--(1,2);
    \draw [red,ultra thick] (1,0)--(2,0)--(2,2);
    
    \foreach \x in {0,...,2}{
    \foreach \y in {0,...,3}{
    \node[bigB] at (\x,\y) {};
    }
    }
    
    \node[wR] at (0,0) {};
    \node[wR] at (1,0) {};
    \node[wR] at (2,0) {};
    
    \node[wR] at (0,1) {};
    \node[wR] at (1,1) {};
    \node[wR] at (2,1) {};
    
    \node[wR] at (1,2) {};
    \node[wR] at (2,2) {};
    
    
    
    
    \end{scope}
    \end{tikzpicture}
    \caption{The maximum weight of pairs of nonintersecting chains in the shaded rectangle on the left shifts to become the maximum weight of chains in the shaded rectangle on the right. The chains where these maxima are achieved need not correspond.}
    \label{fig:introChainShifting}
\end{figure}
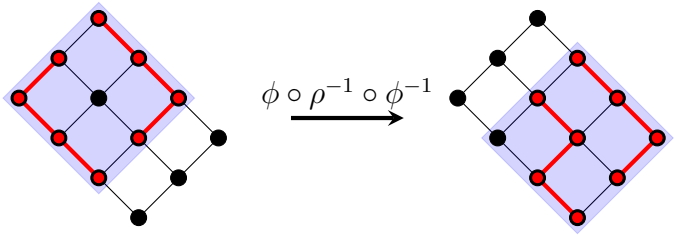

In this paper, we investigate the relationship between rowmotion and another combinatorial map with a birational analogue, namely the \emph{Robinson-Schensted-Knuth correspondence} (RSK). Piecewise-linear and birational liftings of $\RSK$ have been studied previously in, for instance, \cite{danilovkoshevoy,hopkins,noumiyamada}. These maps, along with related maps such as \emph{Sch\"utzenberger promotion} ($\Pro$) acting on semistandard Young tableaux, can be described in terms of toggles as well \cite{johnsonliu}. In Section~\ref{section:chainshifting} we use the toggle perspective to demonstrate how the chain shifting lemma for rowmotion from \cite{johnsonliu} is closely related to a similar chain shifting property of Sch\"utzenberger promotion. 

As an application of these results, we study fillings of \emph{moon polyominoes}, which are diagrams with convex rows and columns that are equivalent to partition diagrams under permuting rows and columns. Rubey \cite{rubey} defines a bijection between nonnegative fillings of equivalent moon polyominoes using Sch\"utzenberger promotion and $\RSK$. This bijection preserves the maximum weight of nonintersecting northeast chains contained in  maximal rectangles of the moon polyominoes.

In Section~\ref{section:fillings} we lift Rubey's results to the piecewise-linear and birational realms, giving new toggle-based proofs. This allows us to construct piecewise-linear bijections between certain analogues of chain polytopes for moon polyominoes that are equivalent under permuting rows and columns. As a result, we deduce that these polytopes have the same Ehrhart polynomial. (Since these polytopes are not in general lattice polytopes, this implies that they exhibit \emph{Ehrhart quasi-polynomial period collapse}.) Rubey also proves that applications of the combinatorial bijection between labelings of several equivalent moon polyominoes commute. In Section~\ref{section:commutationProperties} we use properties of evacuation to give a piecewise-linear and birational proof of this result as well as an analogous result involving rowmotion on the maximal rectangles of the moon polyominoes.

Our proofs rely only on properties of RSK and rowmotion proved in \cite{johnsonliu}, namely Greene's Theorem and the chain shifting lemma, as well as commutation of toggles, but the exact formula for toggles is not otherwise used. As such, our results and proofs all hold for both the piecewise-linear and birational versions of these maps. However, we exclusively phrase our results on the piecewise-linear level to avoid confusion and to more easily discuss polyhedral implications.

The structure of this paper is as follows.
In Section~\ref{section:promotionBackground} we cover background on piecewise-linear rowmotion, promotion, $\RSK$, chain shifting, and fillings of moon polyominoes. In Section~\ref{section:chainshifting} we prove a chain shifting lemma for Sch\"utzenberger promotion. In Section~\ref{section:fillings} we use the chain shifting property to define a piecewise-linear, volume preserving, and continuous lifting of Rubey's map, which restricts to map between certain rational polytopes associated to moon polyominoes. In Section~\ref{section:commutationProperties} we prove chain shifting lemmas for evacuation and Striker-Williams promotion and prove commutation properties for maps on fillings of moon polyominoes.

\section{Background}
\label{section:promotionBackground}

In this section we cover background on the rectangle poset, rowmotion, $\RSK$, promotion, and moon polyominoes.

\subsection{The Rectangle Poset}
\label{subsection:posets}

Let $[r] = \{1,2,\dots,r\}$ be the chain with $r$ elements. Our main poset of study is the \emph{rectangle poset} $R = [r] \times [s]$, the product of two chains. We define the following subsets of interest.

\begin{itemize}
    \item For fixed $i$, the $i$th \emph{up-diagonal} of $R$ is the set of all elements in $R$ of the form $(i,j)$. In Figure~\ref{fig:rectangleposet} the elements of an up-diagonal go southwest to northeast.
    \item For fixed $j$, the $j$th \emph{down-diagonal} of $R$ is the set of all elements in $R$ of the form $(i,j)$. In Figure~\ref{fig:rectangleposet} the elements of a down-diagonal go northwest to southeast.
    \item The $k$th \emph{rank} of $R$ is the set of all elements $(i,j) \in R$ such that $i+j = k$. In Figure~\ref{fig:rectangleposet}, the elements of a rank are horizontally aligned. (Note that the minimum element has rank $2$.)
    \item The $k$th \emph{file} of $R$ is the set of all elements $(i,j) \in R$ such that $i-j = k$. In Figure~\ref{fig:rectangleposet} the elements of a file are vertically aligned.
\end{itemize}

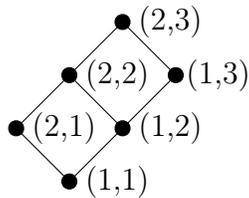
\begin{figure}
    \captionsetup{justification=centering}
    \begin{tikzpicture}[rotate = 45]
    \draw (0,0)--(2,0);
    \draw (0,1)--(2,1);
    
    \draw (0,0)--(0,1);
    \draw (1,0)--(1,1);
    \draw (2,0)--(2,1);
    
    \draw [fill=black] (0,0) circle [radius=0.1cm];
    \draw [fill=black] (1,0) circle [radius=0.1cm];
    \draw [fill=black] (2,0) circle [radius=0.1cm];
    
    \draw [fill=black] (0,1) circle [radius=0.1cm];
    \draw [fill=black] (1,1) circle [radius=0.1cm];
    \draw [fill=black] (2,1) circle [radius=0.1cm];
    
    \node at (0.45,-0.45) {(1,1)};
    \node at (1.45,-0.45) {(1,2)};
    \node at (2.4,-0.4) {(1,3)};
    
    \node at (0.45,0.55) {(2,1)};
    \node at (1.45,0.55) {(2,2)};
    \node at (2.45,0.55) {(2,3)};
    \end{tikzpicture}
    \caption{The rectangle $[2] \times [3]$.}
    \label{fig:rectangleposet}
\end{figure}

We frequently make use of locations of elements in $R$ in relation to other elements, up-diagonals, down-diagonals, and files. 
We say an element $(i_1,j_1)$ is \emph{weakly left} of $(i_2,j_2)$ if $i_1 - j_1 \geq i_2 - j_2$. We say $(i_1,j_1)$ is \emph{strictly left} of $(i_2,j_2)$ if $i_1 - j_1 > i_2 - j_2$. We similarly define \emph{weakly right} and \emph{strictly right}.

\subsection{Rowmotion and Liftings of Rowmotion}
In this subsection, we use $R$ to denote the rectangle poset and $P$ to denote a general poset.

Cameron and Fon-Der-Flaass \cite{cameronfonderflaass} define the following local involution on order ideals.

\begin{Def}
Let $P$ be a finite poset, let $J(P)$ be the set of order ideals of $P$, and let $p \in P$. The \emph{combinatorial toggle} at $p$ is the permutation of order ideals
\[t_p(I) = \begin{cases} I \cup \{p\} &\text{if $p \not\in I$ and $I \cup \{p\} \in J(P)$}, \\
I \setminus \{p\} &\text{if $p \in I$ and $I \setminus \{p\} \in J(P)$}, \\
I &\text{otherwise}.
\end{cases}\]
\end{Def}

Let $n$ be the number of elements of $P$. A \emph{linear extension} of $P$ is an order-preserving bijection $L\colon P \to [n]$.

\begin{Def}
\label{Def:combinatorialRowmotion}
Let $L$ be a linear extension of $P$. Then \emph{combinatorial rowmotion} is
\[\rho = t_{L^{-1}(1)} \circ t_{L^{-1}(2)} \circ \cdots \circ t_{L^{-1}(n)}.\]
\end{Def}
In words, rowmotion is the composition of toggles at all elements in $P$ in the order of a linear extension from the top of $P$ to the bottom. Toggles $t_p$ and $t_q$ commute with each other if and only if $p$ and $q$ do not form a cover relation in $P$. Consequently this definition is independent of the linear extension $L$. Combinatorial rowmotion can also be described without toggles: rowmotion maps an order ideal $I$ to the order ideal generated by the minimal elements of $P \setminus I$; see \cite{cameronfonderflaass}.

Rowmotion has been studied by many authors in connection to dynamical algebraic combinatorics. On most posets, rowmotion is ill-behaved. However, on certain graded posets, rowmotion exhibits a particularly small order. For example, rowmotion on $[r] \times [s]$ has order $r+s$; see \cite{fonderflaass}.

We can encode order ideals as lattice points in $\mathbb{R}^P$. We map an order ideal $I$ to the indicator vector of the order filter $P \setminus I$. Toggles and rowmotion permute the vertices of the following polytope, first defined in \cite{stanley2}.

\begin{Def}
The \emph{order polytope} $\op \subseteq \mathbb{R}^{P}$ of $P$ is the convex hull of the indicator vectors of the order filters of $P$. The inequalities defining $\op$ are $0 \leq x_p \leq 1$ for $p \in P$ and $x_p \leq x_q$ when $p \preceq q$.
\end{Def}

Toggles lift to piecewise-linear, volume-preserving, and continuous maps on $\mathbb{R}^{P}$ \cite{einsteinpropp1}.

\begin{Def}
Let $p \in P$ and let $x \in \mathbb{R}^{P}$. The \emph{piecewise-linear toggle} at $p$ is the map $t_p$ that changes the label at $p$ by
\[x_p \mapsto \min\limits_{q \gtrdot p} x_q + \max\limits_{q \lessdot p} x_q - x_p\]
and changes no other labels. We interpret the empty $\max$ as $0$ and the empty $\min$ as 1.
\end{Def}

(We abuse notation and use the same symbol for combinatorial toggles and piecewise-linear toggles.)
Note that $t_p$ only depends on the coordinates in the neighborhood of $p \in P$ in the Hasse diagram. Consequently for $p,q \in P$, $t_p \circ t_q = t_q \circ t_p$ if and only if neither $p \lessdot q$ nor $p \gtrdot q$ as on the combinatorial level. We frequently reference the following two cases.

\begin{Obs}
\label{Obs:togglesCommute}
Let $A$ and $B$ be compositions of toggles in $[r] \times [s]$, and fix $k \in \mathbb{Z}$.
\begin{enumerate}[(a)]
    \item If every toggle $t_{ij}$ in $A$ satisfies $i > k$ and every toggle $t_{ij}$ in $B$ satisfies $i < k$, then $A \circ B = B \circ A$.
    \item If every toggle $t_{ij}$ in $A$ satisfies $i - j > k$ and every toggle $t_{ij}$ in $B$ satisfies $i - j< k$, then $A \circ B = B \circ A$.
\end{enumerate}
\end{Obs}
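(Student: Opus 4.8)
The plan is to reduce both statements to the pairwise commutation criterion already recorded before the Observation: two piecewise-linear toggles $t_p$ and $t_q$ commute if and only if $\{p,q\}$ is not a cover relation. The first step is a purely formal remark about compositions: \emph{if every toggle appearing in $A$ commutes with every toggle appearing in $B$, then $A \circ B = B \circ A$.} To see this, write $A = t_{p_1} \circ \cdots \circ t_{p_m}$ and $B = t_{q_1} \circ \cdots \circ t_{q_n}$ and slide each $t_{q_\ell}$ leftward past every $t_{p_i}$ in turn; each such transposition is licensed by the assumed pairwise commutation, and after moving all of $B$ to the left we obtain $A \circ B = B \circ A$. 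Thus in both (a) and (b) it suffices to show that no toggle $t_{ij}$ of $A$ and toggle $t_{i'j'}$ of $B$ form a cover relation.

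Next I would record the cover relations of $R = [r] \times [s]$ explicitly. Since the order is the product order, $(i,j) \lessdot (i',j')$ precisely when $(i',j') = (i+1,j)$ or $(i',j') = (i,j+1)$, so $\{(i,j),(i',j')\}$ is a cover relation if and only if $(i-i',\,j-j') \in \{(\pm 1,0),(0,\pm 1)\}$, i.e.\ the two elements agree in one coordinate and differ by exactly $1$ in the other. From this single description I extract two numerical consequences that any cover relation must satisfy:
\[ |i - i'| \le 1 \qquad \text{and} \qquad |(i - j) - (i' - j')| \le 1, \]
the second because $(i-j)-(i'-j') = (i-i') - (j-j')$ equals $\pm 1$ in every one of the four cases.

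For part (a), take a toggle $t_{ij}$ of $A$ and $t_{i'j'}$ of $B$, so $i > k > i'$ with $i,i',k \in \mathbb{Z}$; hence $i \ge k+1$ and $i' \le k-1$, giving $|i-i'| \ge 2$, which violates the first necessary condition above, so the two elements do not form a cover relation and the toggles commute. Part (b) is identical after replacing the coordinate $i$ by the file statistic $i-j$: the hypotheses $i - j > k$ and $i' - j' < k$ force $i-j \ge k+1$ and $i'-j' \le k-1$, so $|(i-j)-(i'-j')| \ge 2$, contradicting the second necessary condition. In either case the reduction step of the first paragraph then delivers $A \circ B = B \circ A$. The only subtlety worth flagging is integrality: the jump from a strict inequality to a gap of at least $2$ (rather than merely a positive gap) uses that the coordinates and $k$ are integers, and this is exactly what rules out a difference of $1$; apart from this, the argument is formal and uses nothing about the toggle formula beyond the stated commutation criterion.
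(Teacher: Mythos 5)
Your proof is correct and matches the paper's (implicit) justification: the paper states this as an Observation following immediately from the pairwise criterion that $t_p$ and $t_q$ commute unless $p,q$ form a cover relation, and your argument simply makes explicit the two routine details (reducing commutation of compositions to pairwise commutation, and checking that the integer gap of at least $2$ in the relevant coordinate rules out any cover relation). Nothing is missing.
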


In other words, if the toggles in $A$ and the toggles in $B$ are separated by an up-diagonal or a file, then $A$ and $B$ commute with each other. Similar statements hold for down-diagonals and ranks. Just as toggles lift to the piecewise-linear realm, rowmotion also lifts in the same way as in Definition~\ref{Def:combinatorialRowmotion}.

Stanley \cite{stanley2} defines another polytope associated to a poset called the \emph{chain polytope}.

\begin{Def}
The \emph{chain polytope} $\cp \subseteq \mathbb{R}^{P}$ of $P$ is the convex hull of the indicator vectors of the antichains of $P$. Alternatively, $\cp$ is defined by the inequalities $x_p \geq 0$ for all $p \in P$ and $\sum\limits_{p \in C} x_p \leq 1$ for all (maximal) chains $C$ of $P$.
\end{Def}

Stanley defines a piecewise-linear, volume-preserving, and continuous map $\phi\colon \op \to \cp$ called the \emph{transfer map}.

\begin{Def}
The \emph{transfer map} $\phi\colon \op \to \cp$ is given by
\[x_p \mapsto x_p - \max\limits_{q \lessdot p} x_q.\]
\end{Def}
The inverse of the transfer map is
\[x_p \mapsto \max\limits_{q_1 \lessdot q_2 \lessdot \dots \lessdot q_k = p} \sum\limits_{i=1}^k x_{q_i}.\]
On the vertices of $\op$ and $\cp$, the transfer map gives a bijection between the order filters and the antichains of $P$. The transfer map equivariantly induces an \emph{antichain rowmotion} on $P$. See \cite{brouwerschrijver,propproby} for information on combinatorial antichain rowmotion and \cite{josephroby2} for the piecewise-linear and birational liftings. 

In \cite{johnsonliu}, the present authors prove a chain shifting action on $\mathcal{C}(R)$ for $R = [r] \times [s]$. For $u_1 \leq u_2$, $v_1 \leq v_2$, and $k \leq \min(u_2-u_1+1, v_2-v_1+1)$, let $\mathcal{P}_{u_1,v_1}^{u_2,v_2}(k)$ denote the set of all systems of $k$ nonintersecting lattice paths from $(u_1,v_1),(u_1,v_1+1),\dots,(u_1,v_1+k-1)$ to $(u_2,v_2-k+1),(u_2,v_2-k+2),\dots,(u_2,v_2)$. Given a labeling $x \in \mathbb{R}^R$ and some system of paths $\mathcal{L} \in \mathcal{P}_{u_1,v_1}^{u_2,v_2}(k)$, the \emph{weight} of $\mathcal{L}$ is the sum of the weights $x_{ij}$ of all vertices $(i,j)$ in the paths of $\mathcal{L}$. We let $H_{u_1,v_1}^{u_2,v_2}(x;k)$ denote the maximum weight over all systems of paths in $\mathcal{P}_{u_1,v_1}^{u_2,v_2}(k)$. Here the $H$ in the notation is meant to suggest the heaviest or maximum weight collection of chains. (For this reason, we take $H_{u_1,v_1}^{u_2,v_2}(x;k)$ to stabilize for $k \geq \min(u_2-u_1+1,v_2-v_1+1)$ by convention.)

Let $I$ be a size $n$ induced subposet of $R$ and let $L\colon I \to [n]$ be a linear extension. We define the \emph{piecewise-linear rowmotion associated to $I$}, denoted by $\rho_I$, to be
\[\rho_I = t_{L^{-1}(1)} \circ \cdots \circ t_{L^{-1}(n)}\]
where $\rho_I$ acts on $\mathbb{R}^R$. In other words, we toggle from the top of $I$ to the bottom in the order of a linear extension. When $I$ is a principal order ideal, we frequently write only the maximum element in the subscript. For example, the rowmotion associated to the order ideal generated by $(i,j)$ is $\rho_{ij}$.

We have the following chain shifting lemma. (The case $I=R$ was proved by the present authors in \cite[Lemma 4.1]{johnsonliu}.)

\begin{Lemma} [Piecewise-Linear Chain Shifting] 
\label{Lemma:orderidealshifting}
Suppose $1 < u \leq v \leq r$. If $I \subseteq R$ satisfies $[v-1] \times [s] \subseteq I$, then for any $k \in \mathbb{Z}_{\geq 0}$ and any $x \in \mathbb{R}^R$,
\[H_{u,1}^{v,s}(x;k) = H_{u-1,1}^{v-1,s}(\phi \circ \rho_I^{-1} \circ \phi^{-1}(x);k).\]
\end{Lemma}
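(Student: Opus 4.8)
The plan is to reduce the general statement to the known case $I = R$ proved in \cite[Lemma 4.1]{johnsonliu} (which corresponds to Figure~\ref{fig:introChainShifting}). The hypothesis $[v-1] \times [s] \subseteq I$ means that $I$ contains every element of $R$ in up-diagonals $1, 2, \dots, v-1$; the elements of $I$ outside of this are confined to up-diagonals $v, v+1, \dots, r$. Since the paths counted by $H_{u,1}^{v,s}$ and $H_{u-1,1}^{v-1,s}$ stay within up-diagonals $u$ through $v$ and $u-1$ through $v-1$ respectively (and $u > 1$), the only coordinates of $x$ that affect either weight maximum lie strictly to the right of up-diagonal $v-1$. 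My strategy is therefore to argue that the portion of $\rho_I^{-1}$ acting on up-diagonals $\geq v$ does not interfere with the chain statistics we care about, reducing the computation to a rowmotion that behaves like $\rho_R^{-1}$ on the relevant region.

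Concretely, I would factor the linear extension $L$ of $I$ so that $\rho_I = A \circ B$ (hence $\rho_I^{-1} = B^{-1} \circ A^{-1}$), where $A$ collects the toggles $t_{ij}$ with $i \geq v$ and $B$ collects the toggles with $i \leq v-1$. Because $[v-1]\times[s] \subseteq I$, the subposet $B$ toggles is exactly the full set of up-diagonals $1$ through $v-1$, so $B$ agrees with the bottom portion of rowmotion on $R$. First I would invoke Observation~\ref{Obs:togglesCommute}(b): the toggles in $A$ all satisfy $i - j \geq v - s$ while... more carefully, the two blocks are separated by the up-diagonal between $i = v-1$ and $i = v$, so by Observation~\ref{Obs:togglesCommute}(a) (separation by an up-diagonal) the blocks $A$ and $B$ commute, and more importantly $A^{-1}$ modifies only coordinates in up-diagonals $\geq v$. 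The key point to establish is that after applying $\phi^{-1}$, then $\rho_I^{-1}$, then $\phi$, the value of $H_{u,1}^{v,s}$ depends only on the coordinates in up-diagonals $u$ through $v$, and on those coordinates $\rho_I^{-1}$ acts identically to $\rho_R^{-1}$.

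The main obstacle, and the step requiring the most care, is that the transfer map $\phi$ and its inverse are \emph{global}: $\phi^{-1}(x)_p$ is a maximum over all chains up to $p$, so coordinates of $x$ in up-diagonals $< u$ could in principle leak into the relevant region. I would handle this by showing that the locality is compatible with how $\phi$ interacts with the chain statistic $H$. Specifically, the transfer map is precisely engineered so that weights of lattice paths in the chain polytope correspond to heights in the order polytope via Greene's theorem (cited as available from \cite{johnsonliu}); the maximum-weight nonintersecting path statistic $H_{u,1}^{v,s}$ is invariant under the portion of the picture lying outside up-diagonals $u$ through $v$ because the paths are anchored with $u_1 = u$ on the left. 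Thus I would argue that the statistic, viewed through $\phi$, only sees the restriction of the labeling to the rectangle $[u,v] \times [s]$, on which $\rho_I^{-1}$ coincides with ordinary rowmotion.

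Having localized everything to the rectangle spanned by up-diagonals $u$ through $v$, I would finish by applying the base case: on this effective rectangle the hypothesis $1 < u \leq v \leq r$ places us in exactly the setting of \cite[Lemma 4.1]{johnsonliu}, which yields
\[
H_{u,1}^{v,s}(x;k) = H_{u-1,1}^{v-1,s}\bigl(\phi \circ \rho_I^{-1} \circ \phi^{-1}(x);k\bigr),
\]
the desired shift of the source diagonal from $u$ down to $u-1$ and the target diagonal from $v$ down to $v-1$. The two claims I expect to verify most carefully are (i) that $\rho_I^{-1}$ acts as the rectangle rowmotion $\rho_R^{-1}$ on the coordinates feeding into the statistic — which relies on $[v-1]\times[s] \subseteq I$ so that all the lower toggles are present — and (ii) the locality of the composite $\phi \circ (\cdot) \circ \phi^{-1}$ with respect to the path statistic, which is the genuinely delicate point and where I anticipate the bulk of the work.
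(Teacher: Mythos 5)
Your overall strategy (reduce to the $I=R$ case of \cite[Lemma 4.1]{johnsonliu}) is the right one and is also what the paper does, but two steps as written do not hold up. First, the blocks $A$ (toggles with $i\geq v$) and $B$ (toggles with $i\leq v-1$) do \emph{not} commute: Observation~\ref{Obs:togglesCommute}(a) requires strict separation by an up-diagonal, whereas here the up-diagonals $v-1$ and $v$ are adjacent and contain cover relations such as $(v-1,j)\lessdot(v,j)$, whose toggles do not commute. The fact you actually need is different and requires no commutation: in $\rho_I^{-1}$ the toggles are applied from the bottom of $I$ upward, so all toggles at elements of the order ideal $[v-1]\times[s]$ are applied first, while every label in up-diagonals $\geq v$ still equals its value in $\phi^{-1}(x)$; hence $\rho_I^{-1}\circ\phi^{-1}(x)$ restricted to $[v-1]\times[s]$ is independent of which elements of up-diagonals $\geq v$ belong to $I$, and the same remains true after applying $\phi$ (which at a point of $[v-1]\times[s]$ reads only that point and the elements it covers).

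Second, the ``localization to up-diagonals $u$ through $v$'' --- which you yourself flag as the delicate point --- is both unjustified and unnecessary. The right-hand statistic $H_{u-1,1}^{v-1,s}(\phi\circ\rho_I^{-1}\circ\phi^{-1}(x);k)$, viewed as a function of $x$, genuinely depends on coordinates of $x$ in up-diagonals below $u$ (they enter through $\phi^{-1}$ and through the toggles at up-diagonals $\leq u-1$); the assertion that ``the statistic, viewed through $\phi$, only sees $x$ restricted to $[u,v]\times[s]$'' is a consequence of the lemma you are trying to prove, not an available tool. Moreover, $\rho_I^{-1}$ restricted to that window is not the rowmotion of the smaller rectangle, since toggles at up-diagonal $u$ read labels at up-diagonal $u-1$. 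None of this is needed: the cited Lemma 4.1 already gives the identity for arbitrary $1<u\leq v\leq r$ with $\rho=\rho_R$, so the only thing left to check is that replacing $\rho^{-1}$ by $\rho_I^{-1}$ does not change the coordinates that $H_{u-1,1}^{v-1,s}$ reads --- and those all lie in $[u-1,v-1]\times[s]\subseteq[v-1]\times[s]$, which is exactly what the corrected first point delivers. With those two repairs your argument collapses to the paper's short proof.
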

On the left hand side we have the maximum weight of all sets of $k$ nonintersecting lattice paths in $\mathcal{P}_{u,1}^{v,s}(k)$. On the right hand side we see that after applying $\phi \circ \rho^{-1} \circ \phi^{-1}$ to $x$, the endpoints of the lattice paths of the maximum weight sum have shifted down by one in the rectangle.

\begin{Ex}
We can visualize this in the following example. Consider the labeling of $R = [4] \times [3]$ appearing in Figure~\ref{fig:chainShifting} and its image under $\phi \circ \rho^{-1} \circ \phi^{-1}$ (here $I = R$) with $u=2$ and $v=4$. Adding the weights along the red chains in both labelings gives 1.15. Note that this is the maximum possible sum of weights among pairs of chains in both rectangles. Visually, the maximum weight of two nonintersecting chains appearing in an interval shifts down in the poset. The chains achieving the maximum weight in the shifted subrectangle need not be shifts of the maximum weight chains in the original subrectangle.
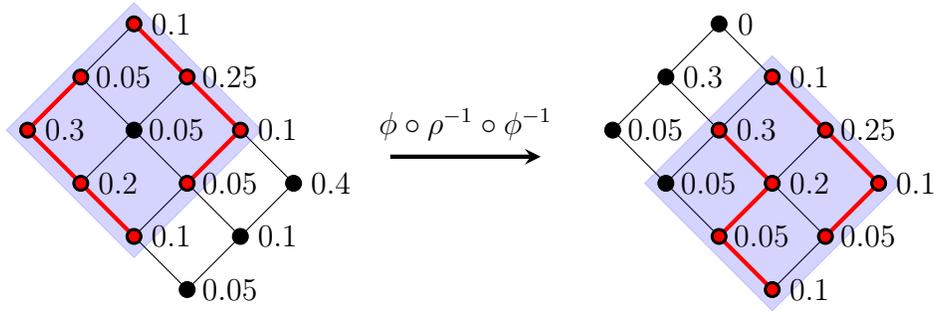
\begin{figure}
    \centering
    \begin{tikzpicture}
    \begin{scope}[rotate=45]
    \draw (0,0) grid (2,3);
    
    \draw [blue, fill = blue, opacity = 0.17] (-0.2,0.8)--(2.2,0.8)--(2.2,3.2)--(-0.2,3.2)--cycle;
    
    \draw [red,ultra thick] (0,1)--(0,3)--(1,3);
    \draw [red,ultra thick] (1,1)--(2,1)--(2,3);
    
    \foreach \x in {0,...,2}{
    \foreach \y in {0,...,3}{
    \node[bigB] at (\x,\y) {};
    }
    }
    
    \node[wR] at (0,1) {};
    \node[wR] at (1,1) {};
    \node[wR] at (2,1) {};
    
    \node[wR] at (0,2) {};
    \node[wR] at (2,2) {};
    
    \node[wR] at (0,3) {};
    \node[wR] at (1,3) {};
    \node[wR] at (2,3) {};
    
    \node at (0.4,-0.4) {0.05};
    \node at (1.35,-0.35) {0.1};
    \node at (2.35,-0.35) {0.4};
    
    \node at (0.35,0.65) {0.1};
    \node at (1.4,0.6) {0.05};
    \node at (2.35,0.65) {0.1};
    
    \node at (0.35,1.65) {0.2};
    \node at (1.4,1.6) {0.05};
    \node at (2.4,1.6) {0.25};
    
    \node at (0.35,2.65) {0.3};
    \node at (1.4,2.6) {0.05};
    \node at (2.35,2.65) {0.1};
    \end{scope}
    
    \node at (3.7,2.2) {$\phi \circ \rho^{-1} \circ \phi^{-1}$};
    \draw [-stealth, ultra thick] (2.7,1.768)--(4.7,1.768);
    
    \begin{scope}[rotate=45, shift = {(5.5,-5.5)}]
    \draw (0,0) grid (2,3);
    
    \draw [blue, fill = blue, opacity = 0.17] (-0.2,-0.2)--(2.2,-0.2)--(2.2,2.2)--(-0.2,2.2)--cycle;
    
    \draw [red,ultra thick] (0,0)--(0,1)--(1,1)--(1,2);
    \draw [red,ultra thick] (1,0)--(2,0)--(2,2);
    
    \foreach \x in {0,...,2}{
    \foreach \y in {0,...,3}{
    \node[bigB] at (\x,\y) {};
    }
    }
    
    \node[wR] at (0,0) {};
    \node[wR] at (1,0) {};
    \node[wR] at (2,0) {};
    
    \node[wR] at (0,1) {};
    \node[wR] at (1,1) {};
    \node[wR] at (2,1) {};
    
    \node[wR] at (1,2) {};
    \node[wR] at (2,2) {};
    
    \node at (0.35,-0.35) {0.1};
    \node at (1.4,-0.4) {0.05};
    \node at (2.35,-0.35) {0.1};
    
    \node at (0.4,0.6) {0.05};
    \node at (1.35,0.65) {0.2};
    \node at (2.4,0.6) {0.25};
    
    \node at (0.4,1.6) {0.05};
    \node at (1.35,1.65) {0.3};
    \node at (2.35,1.65) {0.1};
    
    \node at (0.4,2.6) {0.05};
    \node at (1.35,2.65) {0.3};
    \node at (2.25,2.75) {0};
    \end{scope}
    \end{tikzpicture}
    \caption{The maximum weight pair of nonintersecting paths in each blue rectangle is highlighted in red. By Lemma~\ref{Lemma:orderidealshifting}, these weights are the same.}
    \label{fig:chainShifting}
\end{figure}
\end{Ex}

\begin{proof}[Proof of Lemma~\ref{Lemma:orderidealshifting}]
In the case of $I=R$, it follows from \cite[Lemma~4.1]{johnsonliu} that
\[H_{u,1}^{v,s}(x;k) = H_{u-1,1}^{v-1,s}(\phi \circ \rho^{-1} \circ \phi^{-1}(x);k).\]
The weight on the right hand side is independent of the labels of coordinates outside of the order ideal $[v-1] \times [s]$, so replacing $\rho^{-1}$ in the right hand side with $\rho^{-1}_I$ for any $I \supseteq [v-1] \times [s]$ does not change the right hand side.
\end{proof}

In this paper, we exclusively prove results in the piecewise-linear setting for consistency. However, most of our results (except for those specifically referencing a polytope in Section~\ref{section:fillings}) lift to the birational level as well. We briefly review birational rowmotion.

Tropical expressions and functions in terms of $\max$, $\min$, $+$, and $-$ can be \emph{detropicalized} by replacing $\max$ with addition, addition with multiplication, and subtraction with division. Note that for all $a,b \in \mathbb{R}$,
\[\min(a,b) = -\max(-a,-b).\]
Then the minimum detropicalizes to the \emph{parallel sum} $\left(a^{-1} + b^{-1}\right)^{-1}$. Piecewise-linear toggles detropicalize to the following birational toggles \cite{einsteinpropp1}.

\begin{Def}
Let $P$ be a finite poset, let $p \in P$, and let $x \in \mathbb{R}^{P}_{>0}$. The \emph{birational toggle} at $p$ is the map $t_p$ that changes the $p$-coordinate of $x$ by
\[x_p \mapsto \left(\frac{1}{\sum\limits_{q \gtrdot p} \frac{1}{x_q}} \right) \left( \sum\limits_{q \lessdot p} x_q \right) \frac{1}{x_p}\]
and changes no other coordinates. (We interpret the empty sum as 1.)
\end{Def}

We can lift rowmotion as well by the definition analogous to Definition~\ref{Def:combinatorialRowmotion}. Typically, it is easier to work on the birational level than it is to work on the piecewise-linear level. Since any subtraction-free rational formula tropicalizes, results on the piecewise-linear level are frequently inherited from the birational level. In \cite{johnsonliu}, we proved the birational versions of Lemma~\ref{Lemma:orderidealshifting} and Greene's theorem (to be described below).
Most of the results in this paper will rely primarily on these two results and hence will hold in both the birational and the piecewise-linear settings.

\subsection{RSK and Promotion}
\label{subsection:rskandpromotion}
In this subsection we discuss piecewise-linear RSK in terms of toggles as described by \cite{hopkins, johnsonliu, pak}. We also describe piecewise-linear Greene's theorem and define piecewise-linear promotion. See \cite{sagan} for background on standard Young tableaux and classical RSK.

Classical $\RSK$ maps a matrix $A \in \mathbb{Z}_{\geq 0}^{n \times n}$ to a pair of semistandard Young tableaux $(P,Q)$ of the same shape, called the \emph{insertion tableau} and the \emph{recording tableau} respectively. We can represent a semistandard Young tableaux with a \emph{Gelfand-Tsetlin pattern}.

\begin{Def}
A \emph{Gelfand-Tsetlin pattern} is a triangular array of nonnegative integers

\begin{center}
\begin{tikzpicture}
\begin{scope}[xscale=-1]
\node at (0,0) {$g_{1,1}$};
\node at (1.8,0) {$g_{2,2}$};
\node at (3.6,0) {$g_{3,3}$};
\node at (5.4,0) {$\cdots$};
\node at (7.2,0) {$g_{n,n}$};

\node at (0.9,-0.5) {$g_{2,1}$};
\node at (2.7,-0.5) {$g_{3,2}$};
\node at (4.5,-0.5) {$\cdots$};
\node at (6.3,-0.5) {$g_{n,n-1}$};

\node at (1.8,-1) {$g_{3,1}$};
\node at (3.6,-1) {$\cdots$};
\node at (5.4,-1) {$g_{n,n-2}$};

\node at (2.7,-1.5) {$\iddots$};
\node at (4.5,-1.5) {$\ddots$};

\node at (3.6,-2) {$g_{n,1}$};
\end{scope}
\end{tikzpicture}
\end{center}
\noindent such that $g_{i,j} \geq g_{i,j-1}$ and $g_{i,j} \geq g_{i-1,j}$ for all $i,j$.
\end{Def}

If $P$ is a semistandard Young tableau, then 
the Gelfand-Tsetlin pattern associated to $P$ has row $i$ given by the shape of the boxes of $P$ with label at most $n-i+1$. Since $\RSK$ maps a matrix to two semistandard Young tableaux of the same shape, the Gelfand-Tsetlin patterns of these tableaux have the same first row. We form a matrix $\RSK(A)$ by gluing the Gelfand-Tsetlin patterns for $P$ and $Q$ together along their common first row. By convention, we glue so that the Gelfand-Tsetlin pattern of the $P$-tableau is a labeling of the set of points $(i,j) \in R$ that lie weakly left of $(n,n)$, and similarly the Gelfand-Tsetlin pattern of the $Q$-tableau is a labeling of the points lying weakly right of $(n,n)$. The inequalities in the definition of the Gelfand-Tsetlin pattern imply that the rows and columns of $\RSK(A)$ are weakly increasing.

Labelings of the rectangle poset $[r] \times [s]$ can be realized as $r \times s$ matrices. Hopkins \cite{hopkins} shows that this version of $\RSK$ has a description in terms of toggles in the rectangle poset. In \cite{johnsonliu} we show that this description can be rearranged so that the transfer map factors out, giving the following expression which we take to be the definition of RSK.

\begin{Def}
Let $r,s \in \mathbb{Z}_{> 0}$, $m=\min\{r,s\}$, and $R = [r] \times [s]$. Then $\RSK$ is defined to be the following composition:
\[\RSK = \rho_{r-m+1,s-m+1}^{-1} \circ \cdots \circ \rho_{r-2,s-2}^{-1} \circ \rho_{r-1,s-1}^{-1} \circ \phi^{-1}.\]
\end{Def}
For a labeling $x$ of $R$, we define the following two coordinate projections of $\RSK(x)$:
\begin{align*}
    P(x) &= (\RSK(x)_{ij}:i-j \geq r-s),\\
    Q(x) &= (\RSK(x)_{ij}:i-j \leq r-s).
\end{align*}
On the combinatorial level, $P(x)$ and $Q(x)$ are the Gelfand-Tsetlin patterns of $P$- and $Q$-tableaux. As an abuse of language, we refer to $P(x)$ and $Q(x)$ as the $P$- and $Q$-tableaux of $x$ on the piecewise-linear level.

Classical results about $\RSK$ also lift to the piecewise-linear and birational realms. For example, we have the following version of Greene's theorem \cite{johnsonliu, noumiyamada}.

\begin{Th}[Piecewise-Linear Greene's Theorem] \label{Th:plGreene}
Let $R = [r] \times [s]$, and choose $(i,j) \in R$ such that either $i = r$ or $j = s$. Then for any $x \in \mathbb R^{R}$ and $1\leq k \leq \min \{i,j\}$,
\[\sum_{t=0}^{k-1} \RSK(x)_{i-t,j-t} = H_{1,1}^{i,j}(x;k).\]
\end{Th}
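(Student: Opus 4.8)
The plan is to prove the statement by induction on $m=\min\{r,s\}$, peeling off the outermost rowmotion in the definition of $\RSK$. First I would reduce to a single boundary case: by the transpose symmetry of $\RSK$ (transposing the input matrix reflects the glued Gelfand--Tsetlin pattern across the main up-diagonal and interchanges the $P$- and $Q$-tableaux), the case $i=r$ follows from the case $j=s$, so I assume $j=s$ throughout. The base case is $k=1$: since every rowmotion $\rho_{r-p,s-p}^{-1}$ appearing in $\RSK$ acts only on the order ideal $[r-p]\times[s-p]$, which omits column $s$, the entries in column $s$ are never toggled, so $\RSK(x)_{i,s}=\phi^{-1}(x)_{i,s}$. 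By the formula for $\phi^{-1}$ this is the maximum weight of a saturated chain ending at $(i,s)$, i.e. the maximum weight northeast path from $(1,1)$ to $(i,s)$, which is exactly $H_{1,1}^{i,s}(x;1)$.

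For the inductive step, write $R'=[r-1]\times[s-1]$ and factor $\RSK=A\circ\rho_{r-1,s-1}^{-1}\circ\phi^{-1}$, where $A=\rho_{r-m+1,s-m+1}^{-1}\circ\cdots\circ\rho_{r-2,s-2}^{-1}$ is the composition of the remaining rowmotions. Each toggle in $A$ lies in a rectangle $[r-p]\times[s-p]$ with $p\ge 2$, and I would check that all neighbors referenced by these toggles again lie in $R'$; consequently $A$ acts on $R'$-labelings exactly as the corresponding operator for $\RSK$ on $R'$, so $A=\RSK_{R'}\circ\phi_{R'}$. Only the peeled rowmotion $\rho_{r-1,s-1}^{-1}$ sees the ambient boundary (row $r$ and column $s$). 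Setting $\tilde x=\phi_{R'}\bigl((\rho_{r-1,s-1}^{-1}\phi^{-1}(x))|_{R'}\bigr)$, this yields $\RSK(x)_{a,b}=\RSK_{R'}(\tilde x)_{a,b}$ for all $(a,b)\in R'$. Since the points $(i-t,s-t)$ with $t\ge 1$ all lie in $R'$ and the first of them, $(i-1,s-1)$, lies on the boundary of $R'$, the inductive hypothesis applied in $R'$ gives $\sum_{t=1}^{k-1}\RSK(x)_{i-t,s-t}=H_{1,1}^{i-1,s-1}(\tilde x;k-1)$, computed in $R'$.

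Combining the base case and the inductive computation reduces the theorem to the single identity
\[H_{1,1}^{i,s}(x;k)=H_{1,1}^{i,s}(x;1)+H_{1,1}^{i-1,s-1}(\tilde x;k-1),\]
which says that a heaviest family of $k$ nonintersecting paths in $R$ splits off its outermost path (of weight $\RSK(x)_{i,s}$), with the remaining $k-1$ paths realized as a heaviest nonintersecting family in $R'$ for the shifted labeling $\tilde x$. I expect this splitting identity to be the main obstacle. It is a Lindström--Gessel--Viennot--type statement in the max-plus setting, closely parallel to the chain shifting lemma (Lemma~\ref{Lemma:orderidealshifting}) but with the subrectangle rowmotion $\rho_{r-1,s-1}^{-1}$ carrying the ambient values of row $r$ and column $s$ rather than the full-width rowmotion used there; proving it amounts to showing that applying this rowmotion and the transfer map uncrosses and shifts the inner $k-1$ paths without changing the optimal weight. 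As with the chain shifting lemma, only the $\min$/$\max$ structure of the toggles enters, so the argument --- and hence the theorem --- holds verbatim in the birational setting.
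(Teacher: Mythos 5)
First, a point of comparison: the paper does not actually prove Theorem~\ref{Th:plGreene}; it is imported from \cite{johnsonliu} and \cite{noumiyamada}, so there is no internal proof to measure your argument against. Judged on its own terms, your reduction is structurally sound and follows the standard inductive strategy: the transposition reduction to $j=s$ is valid, the base case $\RSK(x)_{i,s}=\phi^{-1}(x)_{i,s}=H_{1,1}^{i,s}(x;1)$ is correct since no toggle in $\RSK$ touches the $s$th down-diagonal, and the identification of the tail $\rho_{r-m+1,s-m+1}^{-1}\circ\cdots\circ\rho_{r-2,s-2}^{-1}$ with $\RSK$ on $[r-1]\times[s-1]$ is justified because every toggle in that tail references only coordinates of $[r-1]\times[s-1]$ (this is the content of Proposition~\ref{miniRSK}).

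However, there is a genuine gap: the entire content of the theorem is concentrated in the splitting identity
\[H_{1,1}^{i,s}(x;k)=H_{1,1}^{i,s}(x;1)+H_{1,1}^{i-1,s-1}(\tilde x;k-1),\]
which you state but do not prove, and which you yourself flag as ``the main obstacle.'' This identity does not follow from the chain shifting lemma as stated: Lemma~\ref{Lemma:orderidealshifting} requires $[v-1]\times[s]\subseteq I$ (your $I=[r-1]\times[s-1]$ omits the last down-diagonal) and it preserves the number $k$ of paths, whereas here the count drops from $k$ to $k-1$ and the endpoint moves diagonally. Nor can you fall back on a Lindstr\"om--Gessel--Viennot cancellation in the piecewise-linear setting; as the paper notes before the proof of Lemma~\ref{Lemma:doubleRSK}, such arguments use subtraction and do not tropicalize. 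The assertion that $\phi\circ\rho_{r-1,s-1}^{-1}\circ\phi^{-1}$ ``uncrosses and shifts the inner $k-1$ paths without changing the optimal weight'' is precisely the hard max-plus statement that the cited proof in \cite{johnsonliu} establishes, and it is essentially equivalent in difficulty to the theorem itself. A complete argument would need to show directly---by tracking how optimal path families transform under this single rowmotion and the transfer map, or by an explicit max-plus computation---that the outer path detaches with weight $\phi^{-1}(x)_{i,s}$ while the remaining $k-1$ paths are optimally realized in $[r-1]\times[s-1]$ for $\tilde x$.
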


Very similar versions of this in the combinatorial realm appear in \cite{krattenthaler,rubey}.

\begin{Ex}
Consider the leftmost labeling $x$ in Figure~\ref{fig:RSKandPromotion}. One can compute $\RSK(x)$ by applying $\phi^{-1}$ followed by a sequence of toggles. Alternatively, we can compute coordinates of $\RSK(x)$ by Greene's theorem. The maximum weight among all paths from $(1,1)$ to $(3,3)$ in the leftmost labeling is 5, and so $\RSK(x)_{3,3} = 5$. The maximum weight of all pairs of nonintersecting paths from $\{(1,1),(1,2)\}$ to $\{(3,2),(3,3)\}$ is $6$. Hence
\[\RSK(x)_{2,2} = H_{1,1}^{3,3}(x;2) - H_{1,1}^{3,3}(x;1) = 6 - 5 = 1.\]
All coordinates of $\RSK(x)$ can be computed via this method.

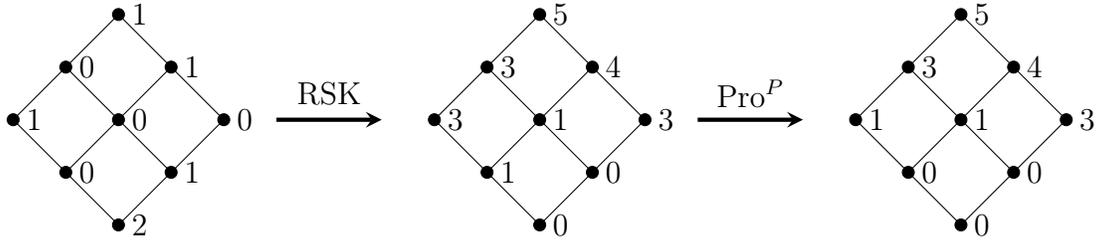
\begin{figure}
\centering
\begin{tikzpicture}[scale = 0.7]

\begin{scope}[shift={(0,0)}]
\draw (0,0)--(2,2);
\draw (-1,1)--(1,3);
\draw (-2,2)--(0,4);

\draw (0,0)--(-2,2);
\draw (1,1)--(-1,3);
\draw (2,2)--(0,4);

\node[wB] at (0,0) {};
\node[wB] at (1,1) {};
\node[wB] at (2,2) {};

\node[wB] at (-1,1) {};
\node[wB] at (0,2) {};
\node[wB] at (1,3) {};

\node[wB] at (-2,2) {};
\node[wB] at (-1,3) {};
\node[wB] at (0,4) {};

\node at (0.4,0) {2};
\node at (1.4,1) {1};
\node at (2.4,2) {0};

\node at (-0.6,1) {0};
\node at (0.4,2) {0};
\node at (1.4,3) {1};

\node at (-1.6,2) {1};
\node at (-0.6,3) {0};
\node at (0.4,4) {1};
\end{scope}

\draw [ultra thick, -stealth] (3,2)--(5,2);
\node at (4,2.5) {$\RSK$};

\begin{scope}[shift={(8,0)}]
\draw (0,0)--(2,2);
\draw (-1,1)--(1,3);
\draw (-2,2)--(0,4);

\draw (0,0)--(-2,2);
\draw (1,1)--(-1,3);
\draw (2,2)--(0,4);

\node[wB] at (0,0) {};
\node[wB] at (1,1) {};
\node[wB] at (2,2) {};

\node[wB] at (-1,1) {};
\node[wB] at (0,2) {};
\node[wB] at (1,3) {};

\node[wB] at (-2,2) {};
\node[wB] at (-1,3) {};
\node[wB] at (0,4) {};

\node at (0.4,0) {0};
\node at (1.4,1) {0};
\node at (2.4,2) {3};

\node at (-0.6,1) {1};
\node at (0.4,2) {1};
\node at (1.4,3) {4};

\node at (-1.6,2) {3};
\node at (-0.6,3) {3};
\node at (0.4,4) {5};
\end{scope}

\draw [ultra thick, -stealth] (11,2)--(13,2);
\node at (12,2.5) {$\proid$};

\begin{scope}[shift={(16,0)}]
\draw (0,0)--(2,2);
\draw (-1,1)--(1,3);
\draw (-2,2)--(0,4);

\draw (0,0)--(-2,2);
\draw (1,1)--(-1,3);
\draw (2,2)--(0,4);

\node[wB] at (0,0) {};
\node[wB] at (1,1) {};
\node[wB] at (2,2) {};

\node[wB] at (-1,1) {};
\node[wB] at (0,2) {};
\node[wB] at (1,3) {};

\node[wB] at (-2,2) {};
\node[wB] at (-1,3) {};
\node[wB] at (0,4) {};

\node at (0.4,0) {0};
\node at (1.4,1) {0};
\node at (2.4,2) {3};

\node at (-0.6,1) {0};
\node at (0.4,2) {1};
\node at (1.4,3) {4};

\node at (-1.6,2) {1};
\node at (-0.6,3) {3};
\node at (0.4,4) {5};
\end{scope}

\end{tikzpicture}
\caption{An example of piecewise-linear $\RSK$ and $\proid$ applied to a labeling.}
\label{fig:RSKandPromotion}
\end{figure}
\end{Ex}

Another important map on semistandard Young tableaux is Sch\"utzenberger promotion. The piecewise-linear analogue is the following.

\begin{Def}
\label{Def:PLPro}
Let $S$ be the subposet of elements weakly left of $(r,s)$ in $R$, and let $F_k$ be the $k$th file of $S$. Then \emph{piecewise-linear promotion} $\Pro\colon\mathbb{R}^S \to \mathbb{R}^S$ is given by
\[\Pro = \rho_{F_{r-s+1}} \circ \cdots \circ \rho_{F_{r-1}}.\]
We write $\proid \colon \mathbb R^R \to \mathbb R^R$ for the map that applies $\Pro$ to $S$ and keeps the labels of $R \setminus S$ fixed. (We define $\idpro$ analogously by transposition.)
\end{Def}

This definition does not depend on the order of the toggles in any given $F_k$ since elements in a file do not form cover relations with one another. If a labeling of $S$ is a Gelfand-Tsetlin pattern, then $\rho_{F_k}$ is the $k$th Bender-Knuth involution \cite[Proposition 2.4]{kirillov}. See Figure~\ref{fig:RSKandPromotion} for an example of this map. 

Rubey \cite{rubey} shows that on nonnegative integer labelings of $R$, $\RSK^{-1} \circ \proid \circ \RSK$ cyclically shifts the down-diagonal sums of the labeling and preserves the up-diagonal sums. In Section~\ref{section:chainshifting} we give a piecewise-linear proof of this chain shifting result for all labelings of $R$ with nonnegative real entries using toggles.

\subsection{Moon Polyominoes}

A \emph{polyomino} is a finite subset of $\mathbb{Z} \times \mathbb{Z}$. We represent polyominoes with boxes and draw them tilted, so that boxes form up- and down-diagonals analogous to the rectangle poset as in Figure~\ref{fig:polyominoes}. We say a polyomino is \emph{convex} if each up- and down-diagonal of the polyomino is connected. A polyomino $\mathcal{M}$ is \emph{intersection-free} if for any two $i_1,i_2 \in \mathbb{Z}$, either
\[\{j \in \mathbb{Z}:(i_1,j) \in \mathcal{M}\} \subseteq \{j \in \mathbb{Z}:(i_2,j) \in \mathcal{M}\}\]
or the reverse inclusion holds. A \emph{moon polyomino} is a convex and intersection-free polyomino. For example, in Figure~\ref{fig:polyominoes}, the left diagram is a moon polyomino, but the other two are not: in the center diagram, the top up diagonal is not convex, while in the right diagram, the two up-diagonals do not form an inclusion relation (as subsets of $\mathbb Z$). A \emph{filling} of a moon polyomino $\m$ is a labeling $x \in \mathbb{Z}_{\geq 0}^{\m}$ of its boxes, as shown in the left diagram of Figure~\ref{fig:polyominoes}.

Any maximal subrectangle $R$ of a moon polyomino naturally has a rectangle poset structure. Combinatorial statistics related to maximal rectangles are of particular interest in the study of moon polyominoes \cite{rubey}.

\begin{Def}
Let $\mathcal{M}$ be a moon polyomino, $k \in \mathbb{Z}_{\geq 0}$, and $x \in \mathbb{Z}^\mathcal{M}_{\geq 0}$ a filling of $\mathcal{M}$.
\begin{enumerate}[(a)]
    \item A \emph{weak northeast chain} of $x$ of weight $k$ is a chain in some rectangle $R \subseteq \mathcal{M}$ such that the sum of all labels in the chain is $k$.
    \item A \emph{strict southeast chain} of $x$ of size $k$ is an antichain of $k$ elements in some rectangle $R \subseteq \mathcal{M}$ such that each element of the antichain has a nonzero label.
\end{enumerate}
\end{Def}

Analogous to weights in the rectangle poset, we let $H_\m(x;k)$ denote the maximum weight of $k$ disjoint northeast chains contained in a common rectangle in $\m$. We let $L_\m(x)$ denote the maximum size among all (strict) southeast chains in $\m$. Here the $L$ in the notation is meant to suggest the longest or maximum size southeast chain.

\begin{figure}
    \centering
    \begin{tikzpicture}[scale = 0.68, rotate = 135]
    \label{polyominoes}
    \draw [very thick] (0,0)--(3,0);
    \draw [very thick] (0,-1)--(3,-1);
    \draw [very thick] (0,-2)--(3,-2);
    \draw [very thick] (1,-3)--(2,-3);
    
    \draw [very thick] (0,0)--(0,-2);
    \draw [very thick] (1,0)--(1,-3);
    \draw [very thick] (2,0)--(2,-3);
    \draw [very thick] (3,0)--(3,-2);
    
    \node at (0.5,-0.5) {2};
    \node at (1.5,-0.5) {0};
    \node at (2.5,-0.5) {0};
    
    \node at (0.5,-1.5) {0};
    \node at (1.5,-1.5) {1};
    \node at (2.5,-1.5) {0};
    
    \node at (1.5,-2.5) {1};
    
    
    
    
    \end{tikzpicture}\begin{tikzpicture}[scale = 0.68]
    \draw [opacity = 0] (1,-4)--(1,-1);
    
    \draw [opacity = 0] (0,-2)--(2,-2);
    
    \end{tikzpicture}\begin{tikzpicture}[scale = 0.68,rotate = 135]
    
    \draw [very thick] (0,0)--(2,0);
    \draw [very thick] (0,-1)--(2,-1);
    \draw [very thick] (0,-2)--(2,-2);
    \draw [very thick] (1,-3)--(2,-3);
    
    \draw [very thick] (0,0)--(0,-2);
    \draw [very thick] (1,0)--(1,-3);
    \draw [very thick] (2,0)--(2,-1);
    \draw [very thick] (2,-2)--(2,-3);
    
    
    \end{tikzpicture}\begin{tikzpicture}[scale = 0.68]
    \draw [opacity = 0] (1,-4)--(1,-1);
    
    \draw [opacity = 0] (0,-2)--(2,-2);
    
    \end{tikzpicture}\begin{tikzpicture}[scale = 0.68,rotate = 135]
    
    \draw [very thick] (0,0)--(1,0);
    \draw [very thick] (0,-1)--(2,-1);
    \draw [very thick] (0,-2)--(2,-2);
    \draw [very thick] (1,-3)--(2,-3);
    
    \draw [very thick] (0,0)--(0,-2);
    \draw [very thick] (1,0)--(1,-3);
    \draw [very thick] (2,-1)--(2,-3);
    
    \end{tikzpicture}
    \caption{Three polyominoes and a filling. The first is a moon polyomino (with a filling) while the other two are not.}
    \label{fig:polyominoes}
\end{figure}
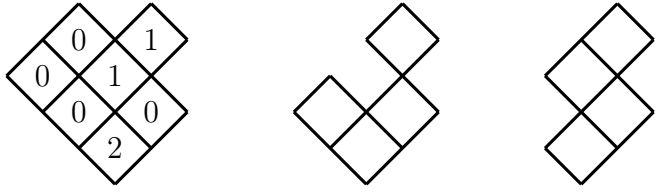

\begin{Ex}
In the filling in Figure~\ref{fig:polyominoes}, $H_\m(x;1) = 3$. Note that the two 1s and the 2 do not all form a single northeast chain since they do not lie in a common rectangle. Also $L_\m(x) = 1$. The 2 does not contribute more than 1 to the longest strict southeast chain since strict chains are not weighted by entries; all nonzero entries count the same.
\end{Ex}

\begin{Def}
We say that two moon polyominoes $\mathcal{M}$ and $\mathcal{N}$ are \emph{equivalent} if $\mathcal{N}$ can be obtained from $\mathcal{M}$ by permuting up-diagonals and down-diagonals.
\end{Def}

Let $\m$ and $\n$ be moon polyominoes, let $R$ be a maximal rectangle of $\m$, and let $\mathcal{D}$ be the union of all down-diagonals of $\m$ intersecting $R$. Suppose further that we can obtain $\n$ from $\m$ as the image of the following map on $\m$:
\[(i,j) \mapsto \begin{cases} (i,j-1)&\text{if $(i,j) \in \mathcal{D} \setminus R$,} \\
(i,j)&\text{otherwise}.\end{cases}\]
In words, we shift the boxes in the down-diagonals containing $R$ that lie outside of $R$ downward. For instance, we could transform $\m$ into $\n$ as follows:
\begin{center}
\begin{tikzpicture}[scale = 0.3]
\begin{scope}[shift={(0,0)}]
\draw (1,-1)--(2,0);
\draw (0,0)--(3,3);
\draw (-2,0)--(3,5);
\draw (-3,1)--(2,6);
\draw (-4,2)--(1,7);
\draw (-4,4)--(-2,6);

\draw (-2,0)--(-4,2);
\draw (1,-1)--(-4,4);
\draw (2,0)--(-3,5);
\draw (2,2)--(-2,6);
\draw (3,3)--(0,6);
\draw (3,5)--(1,7);

\draw [ultra thick](-2,0)--(3,5)--(1,7)--(-4,2)--cycle;
\end{scope}

\draw [ultra thick, -stealth] (4,3)--(6,3);

\begin{scope}[shift={(12,0)}]
\draw (0,-2)--(1,-1);
\draw (-1,-1)--(2,2);
\draw (-2,0)--(3,5);
\draw (-3,1)--(2,6);
\draw (-4,2)--(1,7);
\draw (-5,3)--(-3,5);


\draw (0,-2)--(-5,3);
\draw (1,-1)--(-4,4);
\draw (1,1)--(-3,5);
\draw (2,2)--(-1,5);
\draw (2,4)--(0,6);
\draw (3,5)--(1,7);

\draw [ultra thick](-2,0)--(3,5)--(1,7)--(-4,2)--cycle;
\end{scope}
\end{tikzpicture}
\end{center}
Then $\m$ and $\n$ are equivalent. Any equivalence of moon polyominoes can be described in terms of shifts of down-diagonals or up-diagonals of this form.

In \cite{rubey}, Rubey defines a bijection from fillings of $\m$ to fillings of $\n$ that preserves $H_\m(x;k)$ and $L_\m(x)$, which we now describe. Let $\Omega = \RSK^{-1} \circ \proid \circ \RSK$ on fillings of $R$ (Rubey denotes this map by $\pi$). Let $\m$, $\n$, and $R$ be as above and let $\mathcal{E}$ denote the down-diagonals of $\n$ intersecting $R$. Extend the action of $\Omega$ on fillings of $R$ to a bijection between fillings of $\m$ and $\n$ as follows:
\[\Omega_{\m \to \n}(x)_{ij} = \begin{cases} \Omega(x)_{ij}&\text{if $(i,j) \in R$,} \\
x_{i,j+1}&\text{if $(i,j) \in \mathcal{E} \setminus R$,} \\
x_{ij}&\text{if $(i,j) \in \n \setminus \mathcal{E}$.} \end{cases}\]
In other words, apply $\Omega$ inside of $R$ and shift the labels outside of $R$ appropriately. (One can symmetrically define an extension of $\RSK^{-1} \circ \idpro \circ \RSK$ on $R$ to shift up-diagonals.)

In Section \ref{section:fillings} we show that $\Omega_{\m \to \n}$ lifts to a piecewise-linear, volume-preserving, continuous map between non-lattice polytopes, and we give piecewise-linear generalizations of Rubey's preserved statistics. Rubey also shows that applications of $\Omega_{\m \to \n}$ to distinct maximal rectangle of a moon polyomino commute with one another. We prove the piecewise-linearization of this in Section~\ref{section:commutationProperties}. Our methods differ significantly from Rubey's. We use very few of the combinatorial properties of $\RSK$; instead we rely near exclusively on commutation of toggles, the chain shifting properties of rowmotion, and piecewise-linear Greene's theorem. Consequently all proofs in Sections~\ref{section:chainshifting} and \ref{section:commutationProperties} are valid in the combinatorial, piecewise-linear, and birational realms.

\section{Promotion and Chain Shifting}
\label{section:chainshifting}
Rubey's chain shifting property can be derived completely on the piecewise-linear (or birational) level in terms of the rowmotion chain shifting property and RSK. All proofs in this section are valid in both the piecewise-linear and birational realms. 

We begin by studying a slight variant of the map $\RSK$. To each $(a,b) \in [r+1] \times [s+1]$, define
\[\RSK_{a,b} = \rho^{-1}_{a-\min(a,b)+1,b-\min(a,b)+1} \circ \dots \circ \rho^{-1}_{a-1,b-1} \circ \phi^{-1},\]
so that $\RSK = \RSK_{r,s}$. 
Note that we allow $a = r+1$ and $b = s+1$, so that $(a,b)$ may lie just outside of the rectangle $R$. Let $\mathcal{RSK}_{a,b}$ denote ${\RSK_{a,b}} \circ \phi$, the toggle part of $\RSK_{a,b}$. Note that
\[\mathcal{RSK}_{r+1,s} = \mathcal{RSK}_{r,s-1} \circ \rho^{-1}_{r,s-1}.\]
See Figure~\ref{fig:RSKNesting}. When $(a,b)$ lies inside of the rectangle, we have the following proposition.

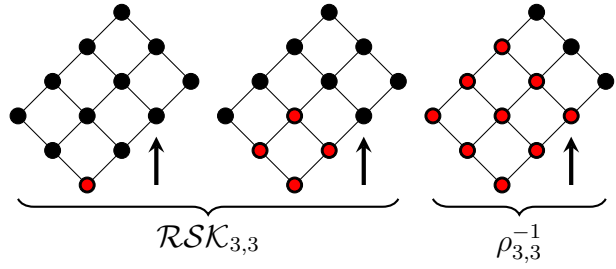
\begin{figure}
    \centering
    \begin{tikzpicture}[scale = 0.65]
    
        \tikzstyle{fancyR}=[circle, draw, fill=red, inner sep=0pt, minimum width=4.6pt]
        \foreach \z in {0,3,6}{
        \begin{scope}[rotate = 45, shift = {(\z,-\z)}]
            \draw (0,0) grid (3,2);
            
            \foreach \x in {0,...,3}{
                \foreach \y in {0,...,2}{
                    \node[bigB] at (\x,\y) {};
                }
            }
        \end{scope}
        }
        
        \begin{scope}[rotate = 45, shift={(0,0)}]
        
        \node[fancyR] at (0,0) {};
        
        \draw [-stealth,shift={(1,-1)},ultra thick] (0,0)--(0.7,0.7);
        
        \draw [decorate,
        decoration = {brace,mirror,amplitude=6pt}, thick] (-1.2,0.8) --  (4.3,-4.7);
        \node at (0.95,-2.55) {$\mathcal{RSK}_{3,3}$};
        \end{scope}
        
        \begin{scope}[rotate = 45, shift={(3,-3)}]
        
        \node[fancyR] at (0,0) {};
        \node[fancyR] at (1,0) {};
        
        \node[fancyR] at (0,1) {};
        \node[fancyR] at (1,1) {};
        
        \draw [-stealth,shift={(1,-1)},ultra thick] (0,0)--(0.7,0.7);
        \end{scope}
        
        \begin{scope}[rotate = 45, shift={(6,-6)}]
        
        \node[fancyR] at (0,0) {};
        \node[fancyR] at (1,0) {};
        \node[fancyR] at (2,0) {};
        
        \node[fancyR] at (0,1) {};
        \node[fancyR] at (1,1) {};
        \node[fancyR] at (2,1) {};
        
        \node[fancyR] at (0,2) {};
        \node[fancyR] at (1,2) {};
        \node[fancyR] at (2,2) {};
        
        \draw [decorate,
        decoration = {brace,mirror,amplitude=6pt}, thick] (-1.2,0.8) --  (1.3,-1.7);
        \node at (-0.6,-1.1) {$\rho^{-1}_{3,3}$};
        
        \draw [-stealth,shift={(1,-1)},ultra thick] (0,0)--(0.7,0.7);
        \end{scope}
        
    \end{tikzpicture}
    \caption{An example of $\mathcal{RSK}_{r+1,s}$ where $r = 3$ and $s = 4$. Toggles are composed in the order indicated by the arrow beneath each rectangle, from the rightmost rectangle to the leftmost.}
    \label{fig:RSKNesting}
\end{figure}

\begin{Prop}
\label{miniRSK}
Let $(a,b) \in R$ and let $\pi\colon\mathbb{R}^R \to \mathbb{R}^{[a] \times [b]}$ be a coordinate projection. Then for all $(i,j) \in R$,
\[\RSK_{a,b}(x)_{ij} = \begin{cases} {\RSK} \circ \pi(x)_{ij} &\text{if  $(i,j) \in [a] \times [b]$}, \\
\phi^{-1}(x)_{ij} &\text{otherwise},\end{cases}\]
where the $\RSK$ on the right hand side acts on the rectangle $[a] \times [b]$.
\end{Prop}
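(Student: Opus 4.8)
The plan is to separate $\RSK_{a,b} = \rho^{-1}_{a-\min(a,b)+1,b-\min(a,b)+1} \circ \dots \circ \rho^{-1}_{a-1,b-1} \circ \phi^{-1}$ into its transfer-map part $\phi^{-1}$ and its rowmotion part $\mathcal{RSK}_{a,b} = \RSK_{a,b}\circ\phi$, and to show that neither part ``sees'' the ambient rectangle $R$ outside the subrectangle $[a]\times[b]$. Concretely, I would establish two claims: (i) every coordinate of $\RSK_{a,b}(x)$ indexed by $(i,j)\notin[a]\times[b]$ equals $\phi^{-1}(x)_{ij}$; and (ii) the coordinates of $\RSK_{a,b}(x)$ indexed by $[a]\times[b]$ agree with ordinary $\RSK$ computed inside the rectangle $[a]\times[b]$ applied to $\pi(x)$. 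Since $\min(a,b)$ is exactly the smaller side length of $[a]\times[b]$, the list of order ideals $[a-t]\times[b-t]$ for $t=1,\dots,\min(a,b)-1$ appearing in $\RSK_{a,b}$ is identical to the list appearing in the definition of $\RSK$ on $[a]\times[b]$, so (ii) reduces to checking that the individual maps behave the same way in the two ambient posets.

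For claim (i), I would first observe that each rowmotion $\rho^{-1}_{a-t,b-t}$ is a composition of toggles at elements of the order ideal $[a-t]\times[b-t]\subseteq[a-1]\times[b-1]$, and that a toggle changes only its own coordinate. Hence after $\phi^{-1}$ is applied, none of the subsequent rowmotions alters any coordinate outside $[a-1]\times[b-1]$, and in particular none outside $[a]\times[b]$, giving (i) immediately. For the transfer map itself I would use the formula $\phi^{-1}(x)_{ij}=\max \sum_{t} x_{q_t}$ over saturated chains terminating at $(i,j)$: every such chain consists of elements $\preceq(i,j)$, all of which lie in $[i]\times[j]$. Thus for $(i,j)\in[a]\times[b]$ the value $\phi^{-1}(x)_{ij}$ depends only on $\pi(x)$ and coincides with the inverse transfer map of $[a]\times[b]$ applied to $\pi(x)$.

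The heart of the argument, and the step I expect to require the most care, is showing that each toggle appearing in the rowmotion part acts identically whether interpreted as a toggle of $R$ or of $[a]\times[b]$. All of these toggles occur at coordinates $(i,j)\in[a-1]\times[b-1]$, so $i\le a-1$ and $j\le b-1$. Their covers in $R$ are $(i+1,j)$ and $(i,j+1)$, and since $i+1\le a\le r$ and $j+1\le b\le s$ both of these also lie in $[a]\times[b]$; likewise the elements covered, $(i-1,j)$ and $(i,j-1)$, lie in $[a]\times[b]$ whenever they exist in $R$ and are absent in both posets otherwise. Consequently each such toggle reads exactly the same neighborhood in $R$ and in $[a]\times[b]$, with no discrepancy in the empty-$\max$/empty-$\min$ conventions, and so acts identically on the $[a]\times[b]$ coordinates in either poset. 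Because every toggle in the rowmotion part reads and writes only coordinates of $[a]\times[b]$, the whole composition restricted to $[a]\times[b]$ equals the same composition of rowmotions performed inside $[a]\times[b]$. Combining this with the transfer-map computation from the previous paragraph yields $\RSK_{a,b}(x)_{ij}=\RSK(\pi(x))_{ij}$ for $(i,j)\in[a]\times[b]$, which is claim (ii) and completes the proof.
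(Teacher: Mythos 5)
Your proposal is correct and follows essentially the same route as the paper: the coordinates outside $[a]\times[b]$ are untouched because all toggles in $\mathcal{RSK}_{a,b}$ lie in $[a-1]\times[b-1]$, and the coordinates inside agree because each toggle (and the inverse transfer map at those points) reads only data from $[a]\times[b]$. You simply spell out the locality of the toggle formula and the empty-$\max$/$\min$ conventions in more detail than the paper does.
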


\begin{proof}
Since $\RSK_{a,b}$ contains no toggles in up-diagonals $a$ through $r$ or in down-diagonals $b$ through $s$, we know that for $(i,j) \in R \setminus \left([a] \times [b] \right)$,
\[\RSK_{a,b}(x)_{ij} = \phi^{-1}(x)_{ij}.\]
For $(i,j) \in [a] \times [b]$, note that toggles appearing in $\RSK_{a,b}$ do not depend on elements in $R \setminus \left([a] \times [b]\right)$, so $\RSK_{a,b}(x)_{ij} = \RSK(x)_{ij}$.
\end{proof}


If we compose instances of $\mathcal{RSK}_{a,b}$ and $\mathcal{RSK}_{c,b}^{-1}$, a great deal of cancellation occurs since these maps are nearly inverses of each other. When $a$ and $c$ differ by 1, we can give an explicit description of the toggles that do not cancel.

As in Section~\ref{section:promotionBackground}, let $S$ be the induced subposet of $[r] \times [s]$ of elements weakly left of $(r,s)$, and let $F_k$ be the file indexed by $k$. 
If we now let $F_k^{ij} = F_k \cap ([i] \times [j])$, then define
\[\Pro_{ij} = \rho_{F_{i-j+1}^{ij}} \circ \cdots \circ \rho_{F_{i-2}^{ij}} \circ \rho_{F_{i-1}^{ij}}\]
on $\mathbb R^S$, and similarly define $\proid_{ij}$ on $\mathbb R^R$. Hence $\proid_{r,s} = \proid$. We also define $\idpro_{ij}$ by transposing appropriately.

\begin{Lemma}
\label{Lemma:proidRSK}
Let $(a,b) \in R$. Then
\[\mathcal{RSK}_{a+1,b} \circ \mathcal{RSK}_{a,b}^{-1} = {\RSK_{a+1,b}} \circ \RSK_{a,b}^{-1}= \proid_{a,b}.\]
\end{Lemma}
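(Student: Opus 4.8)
\textbf{Recall the setup.} Write $m=\min(a,b)$. The plan is to reduce both sides to short words in the toggles and to simplify using only involutivity of toggles ($t_p^2=\mathrm{id}$) and the commutation rules of Observation~\ref{Obs:togglesCommute}; the exact toggle formula is never needed. The first equality $\mathcal{RSK}_{a+1,b}\circ\mathcal{RSK}_{a,b}^{-1}=\RSK_{a+1,b}\circ\RSK_{a,b}^{-1}$ is immediate, since $\mathcal{RSK}_{c,d}=\RSK_{c,d}\circ\phi$ and the middle factors of $\phi$ and $\phi^{-1}$ cancel. So it suffices to prove $\mathcal{RSK}_{a+1,b}\circ\mathcal{RSK}_{a,b}^{-1}=\proid_{a,b}$.

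First I would record the peeling recursion
\[\mathcal{RSK}_{c,d}=\mathcal{RSK}_{c-1,d-1}\circ\rho^{-1}_{c-1,d-1},\]
which is immediate from the definition: the first-applied factor of $\mathcal{RSK}_{c,d}$ is $\rho^{-1}_{c-1,d-1}$, and the remaining nested ideals $(c-1-t,d-1-t)$ are exactly those of $\mathcal{RSK}_{c-1,d-1}$ (using $\min(c-1,d-1)=\min(c,d)-1$). Applying this to $(a+1,b)$ and to $(a,b)$ gives $\mathcal{RSK}_{a+1,b}=\mathcal{RSK}_{a,b-1}\circ\rho^{-1}_{a,b-1}$ and $\mathcal{RSK}_{a,b}^{-1}=\rho_{a-1,b-1}\circ\mathcal{RSK}_{a-1,b-1}^{-1}$, so that
\[\mathcal{RSK}_{a+1,b}\circ\mathcal{RSK}_{a,b}^{-1}=\mathcal{RSK}_{a,b-1}\circ\bigl(\rho^{-1}_{a,b-1}\circ\rho_{a-1,b-1}\bigr)\circ\mathcal{RSK}_{a-1,b-1}^{-1}.\]
The middle factor is where the combinatorics lives. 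I claim that for $c\ge 1$,
\[\rho^{-1}_{a,c}\circ\rho_{a-1,c}=t_{a,c}\circ t_{a,c-1}\circ\cdots\circ t_{a,1},\]
i.e. it is the toggle of the top up-diagonal $D=\{(a,1),\dots,(a,c)\}$ of $[a]\times[c]$ applied from the bottom up. To see this, note that no element of $[a-1]\times[c]$ covers an element of $D$ (the only covers of $(a,j)$ are $(a,j+1)$ and $(a+1,j)$), so listing $D$ from top to bottom followed by any linear extension of $[a-1]\times[c]$ is itself a linear extension of $[a]\times[c]$ read from the top. Since rowmotion is independent of the chosen linear extension, $\rho_{a,c}=\rho_{a-1,c}\circ\bigl(t_{a,1}\circ\cdots\circ t_{a,c}\bigr)$, and rearranging gives the claim. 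Writing $T_D$ for this up-diagonal toggle with $c=b-1$, I obtain
\[\mathcal{RSK}_{a+1,b}\circ\mathcal{RSK}_{a,b}^{-1}=\mathcal{RSK}_{a,b-1}\circ T_D\circ\mathcal{RSK}_{a-1,b-1}^{-1}.\]

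Finally I would induct on $\min(a,b)$. The base cases $a=1$ and $b=1$ are direct: when $b=1$ both sides are the identity, and when $a=1$ both sides reduce to inverse rowmotion on the single chain $[1]\times[b-1]$. For the inductive step, insert $\mathcal{RSK}_{a,b-1}^{-1}\circ\mathcal{RSK}_{a,b-1}$ after $T_D$ and use the inductive hypothesis in the form $\mathcal{RSK}_{a,b-1}\circ\mathcal{RSK}_{a-1,b-1}^{-1}=\proid_{a-1,b-1}$, reducing the problem to the conjugation identity
\[\mathcal{RSK}_{a,b-1}\circ T_D\circ\mathcal{RSK}_{a,b-1}^{-1}=\proid_{a,b}\circ\proid_{a-1,b-1}^{-1}.\]

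\textbf{Main obstacle.} This last identity is the crux, and its content is purely commutation. Writing $F_k^{a,b}$ for the restricted files, one checks that $\rho_{F_{a-1}^{a,b}}=t_{a,1}$ and $\rho_{F_k^{a,b}}=\rho_{F_k^{a-1,b-1}}\circ t_{a,a-k}$ for $a-b+1\le k\le a-2$, because within a single file no two boxes form a cover relation, so the extra box $(a,a-k)$ commutes past the rest of its file. Hence, after cancelling the common file-rowmotions of $[a-1]\times[b-1]$, the right-hand side is built exactly from the up-diagonal toggles $t_{a,1},\dots,t_{a,b-1}$ comprising $T_D$. The remaining work is to verify, using Observation~\ref{Obs:togglesCommute}, that conjugating $T_D$ by $\mathcal{RSK}_{a,b-1}$ redistributes these toggles into the files in precisely this order. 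I expect this bookkeeping---tracking how each up-diagonal toggle passes the nested rowmotions of $\mathcal{RSK}_{a,b-1}$, since $t_{a,j}$ fails to commute only with the toggle at $(a-1,j)$---to be the delicate part; everywhere else the argument uses nothing beyond $t_p^2=\mathrm{id}$ and the commutation rules, so it holds verbatim in the combinatorial, piecewise-linear, and birational settings.
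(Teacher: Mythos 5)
Your reductions are all correct as far as they go: the cancellation of $\phi$, the peeling recursion $\mathcal{RSK}_{c,d}=\mathcal{RSK}_{c-1,d-1}\circ\rho^{-1}_{c-1,d-1}$, the identification $\rho^{-1}_{a,b-1}\circ\rho_{a-1,b-1}=T_D$ for the top up-diagonal $D$, and the base cases $\min(a,b)=1$ all check out. But there is a genuine gap exactly where you flag one: the conjugation identity $\mathcal{RSK}_{a,b-1}\circ T_D\circ\mathcal{RSK}_{a,b-1}^{-1}=\proid_{a,b}\circ\proid_{a-1,b-1}^{-1}$ is asserted, not proved, and given your inductive hypothesis it is logically equivalent to the lemma itself --- so essentially all of the content has been deferred to the step you describe as "bookkeeping." Moreover, the heuristic you give for it does not go through as written: in $\proid_{a,b}\circ\proid_{a-1,b-1}^{-1}$ the common file-rowmotions of $[a-1]\times[b-1]$ do not simply cancel, because they are interleaved with the toggles $t_{a,1},\dots,t_{a,b-1}$ and each $t_{a,a-k}$ fails to commute with the file containing $t_{a-1,a-k}$; similarly, pushing $T_D$ through $\mathcal{RSK}_{a,b-1}$ forces a nontrivial interleaving with the top up-diagonal of $\rho_{a-1,b-2}$, whose output then has to be tracked through the deeper layers. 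Carrying this out is at least as much work as proving the lemma directly.

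For comparison, the paper avoids the induction and the conjugation entirely by peeling every layer at once: writing $I_k=\{k\}\times[b-a+k-1]$ for the top up-diagonal of the corresponding nested ideal, each factor of $\mathcal{RSK}_{a+1,b}$ decomposes as $\rho^{-1}_{a-k,b-k-1}=\rho^{-1}_{I_{a-k}}\circ\rho^{-1}_{a-k-1,b-k-1}$, and since each $\rho^{-1}_{a-\ell,b-\ell}$ lives in up-diagonals at most $a-\ell$ while every $I_{a-k}$ it must pass lies in up-diagonal $a-k\ge a-\ell+2$, Observation~\ref{Obs:togglesCommute}(a) collects all the $\rho^{-1}_{I_{a-k}}$ on the outside, yielding $\bigl(\rho^{-1}_{I_{a-m+2}}\circ\cdots\circ\rho^{-1}_{I_a}\bigr)\circ\mathcal{RSK}_{a,b}$. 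The only remaining point is the static check that this product of up-diagonal rowmotions equals $\proid_{a,b}$: both are compositions of the same toggles, each used once, and the file ordering and the up-diagonal ordering agree on every cover pair. If you want to keep your inductive route, you must actually carry out the interleaving argument for the conjugation identity; otherwise I would recommend switching to the simultaneous peeling, which reduces the whole proof to one application of the commutation observation.
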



To visualize Lemma~\ref{Lemma:proidRSK} in the case $(a,b)=(r,s)$, refer to Figure~\ref{fig:RSKNesting} for a depiction of the toggles in $\mathcal {RSK}_{r+1,s}$. Peel off the top up-diagonal of toggles in each rectangle and commute them to the left past the other toggles as much as possible. This yields ${\proid} \circ \mathcal{RSK}$. (See also Figure~\ref{fig:doubleRSKCancelation} for an alternate depiction.)

\begin{proof}
Let $m = \min(a+1,b)$. For $k \leq a$, let $I_k = \{k\} \times [b-a+k-1]$, so that in particular $\proid_{a,b}$ can be written as $\rho^{-1}_{I_{a-m+2}} \circ \dots \circ \rho^{-1}_{I_{a-1}} \circ \rho^{-1}_{I_{a}}$. Then 
\begin{align*}
    \mathcal{RSK}_{a+1,b} &= \rho^{-1}_{a-m+2,b-m+1} \circ  \dots \circ \rho^{-1}_{a-1,b-2} \circ \rho^{-1}_{a,b-1} \\
    &= \left(\rho^{-1}_{I_{a-m+2}} \circ \rho^{-1}_{a-m+1,b-m+1}\right) \circ \dots \circ (\rho^{-1}_{I_{a-1}} \circ \rho^{-1}_{a-2,b-2}) \circ \left( \rho^{-1}_{I_{a}} \circ \rho^{-1}_{a-1,b-1}\right).
\end{align*}

We note that each $I_{a-k}$ lies in the $(a-k)$th up-diagonal. Each $\rho^{-1}_{a-\ell,b-\ell}$ to the left of $\rho^{-1}_{I_{a-k}}$ is separated from $I_{a-k}$ by at least one up-diagonal. By Observation~\ref{Obs:togglesCommute} we can then rewrite this as
\[\left(\rho^{-1}_{I_{a-m+2}} \circ \dots \circ \rho^{-1}_{I_{a}}\right) \circ \left(\rho^{-1}_{a-m+1,b-m+1} \circ \dots \circ \rho^{-1}_{a-1,b-1}\right) = {\proid_{a,b}} \circ \mathcal{RSK}_{a,b}. \qedhere\]
\end{proof}

In the case that $(a,b)=(r,s)$, conjugating Lemma~\ref{Lemma:proidRSK} by $\RSK$ gives the following corollary. 
\begin{Cor} \label{cor:omega}
\[\RSK^{-1}_{r,s} \circ \RSK_{r+1,s} = \RSK^{-1} \circ \proid \circ \RSK = \Omega.\]
\end{Cor}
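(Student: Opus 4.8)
The plan is to obtain the corollary directly from Lemma~\ref{Lemma:proidRSK} by specialization and conjugation, with no genuinely new computation required. First I would set $(a,b)=(r,s)$ in Lemma~\ref{Lemma:proidRSK}. Since $\proid_{r,s}=\proid$ and $\RSK=\RSK_{r,s}$ by definition, the lemma reads
\[\RSK_{r+1,s} \circ \RSK_{r,s}^{-1} = \proid.\]
This already isolates $\proid$ as a product of two instances of the variant map $\RSK_{a,b}$, which is the only input I will need.

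Next I would conjugate this identity by $\RSK$. Substituting the displayed expression for $\proid$ into $\RSK^{-1} \circ \proid \circ \RSK$ and using $\RSK^{-1}=\RSK_{r,s}^{-1}$ gives
\[\RSK^{-1} \circ \proid \circ \RSK = \RSK_{r,s}^{-1} \circ \RSK_{r+1,s} \circ \RSK_{r,s}^{-1} \circ \RSK_{r,s} = \RSK_{r,s}^{-1} \circ \RSK_{r+1,s},\]
where the inner pair $\RSK_{r,s}^{-1} \circ \RSK_{r,s}$ cancels to the identity. This establishes the first equality of the corollary, namely $\RSK^{-1}_{r,s} \circ \RSK_{r+1,s} = \RSK^{-1} \circ \proid \circ \RSK$.

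Finally, the second equality $\RSK^{-1} \circ \proid \circ \RSK = \Omega$ is immediate from the definition of $\Omega$ in Section~\ref{section:promotionBackground}, where $\Omega = \RSK^{-1} \circ \proid \circ \RSK$ on fillings of $R$. I expect no real obstacle here: all the substantive content—peeling off the top up-diagonal of toggles and commuting it past the remaining rowmotion toggles via Observation~\ref{Obs:togglesCommute}—was already carried out in the proof of Lemma~\ref{Lemma:proidRSK}. The corollary itself is purely a matter of specializing $(a,b)$ to $(r,s)$ and conjugating, so the proof is a short algebraic manipulation rather than an argument requiring new ideas.
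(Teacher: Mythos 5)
Your proof is correct and matches the paper's own argument exactly: the paper likewise obtains the corollary by specializing Lemma~\ref{Lemma:proidRSK} to $(a,b)=(r,s)$ and conjugating by $\RSK$, with the identification $\Omega = \RSK^{-1} \circ \proid \circ \RSK$ coming straight from the definition in Section~\ref{section:promotionBackground}.
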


Using Lemma~\ref{Lemma:proidRSK} we can compute any composition $\mathcal{RSK}_{a,b} \circ \mathcal{RSK}_{c,b}^{-1}$. For the following proposition, we state the transposed version since it will be of more use to us.


\begin{Prop}
\label{Prop:RSKCancellation}
Let $a \in [r+1]$ and let $b,c \in [s+1]$. If $c \geq b$, then 
\[\mathcal{RSK}_{a,c} \circ \mathcal{RSK}_{a,b}^{-1} = \RSK_{a,c} \circ \RSK^{-1}_{a,b} = \idpro_{a,c-1} \circ \idpro_{a,c-2} \circ \cdots \circ \idpro_{a,b}\]
In particular, for any $b$ and $c$, $\mathcal{RSK}_{a,c} \circ \mathcal{RSK}_{a,b}^{-1}$ is equivalent to a toggle sequence containing only toggles at elements strictly right of $(a,\min(b,c))$.
\end{Prop}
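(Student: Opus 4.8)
The plan is to first reduce everything to the toggle parts $\mathcal{RSK}$, then prove the displayed identity for $c \geq b$ by telescoping the transpose of Lemma~\ref{Lemma:proidRSK}, and finally extract the ``in particular'' statement from the local structure of the maps $\idpro_{a,k}$.

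The equality $\mathcal{RSK}_{a,c} \circ \mathcal{RSK}_{a,b}^{-1} = \RSK_{a,c} \circ \RSK_{a,b}^{-1}$ I expect to be automatic: since $\RSK_{a,b} = \mathcal{RSK}_{a,b} \circ \phi^{-1}$ by definition, the middle factor $\phi^{-1} \circ \phi$ cancels upon composing $\RSK_{a,c} \circ \RSK_{a,b}^{-1}$, exactly as in the statement of Lemma~\ref{Lemma:proidRSK}. So it suffices to work with the toggle parts. For the main identity, I would first transpose Lemma~\ref{Lemma:proidRSK} to get $\mathcal{RSK}_{a,k+1} \circ \mathcal{RSK}_{a,k}^{-1} = \idpro_{a,k}$ for each $k$, and then telescope over $c \geq b$:
\[\mathcal{RSK}_{a,c} \circ \mathcal{RSK}_{a,b}^{-1} = \left(\mathcal{RSK}_{a,c}\circ\mathcal{RSK}_{a,c-1}^{-1}\right)\circ\cdots\circ\left(\mathcal{RSK}_{a,b+1}\circ\mathcal{RSK}_{a,b}^{-1}\right) = \idpro_{a,c-1}\circ\cdots\circ\idpro_{a,b},\]
the intermediate factors $\mathcal{RSK}_{a,k}^{\pm 1}$ cancelling in pairs.

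For the ``in particular'' clause the key structural observation is that $\idpro_{a,k}$ toggles only elements strictly right of $(a,k)$. This follows by transposing the corresponding fact for $\proid_{ij}$, which is immediate from its definition: $\proid_{ij} = \rho_{F^{ij}_{i-j+1}} \circ \cdots \circ \rho_{F^{ij}_{i-1}}$ involves only files of index exceeding $i-j$, that is, elements strictly left of $(i,j)$. Now when $c \geq b$ every factor $\idpro_{a,k}$ above has $k \geq b$, so the set of toggled elements, namely $\{\, (i,j) : i-j < a-k \,\}$, is contained in $\{\, (i,j) : i-j < a-b \,\}$ because $a-k \leq a-b$; hence the whole composition is a toggle sequence using only elements strictly right of $(a,b) = (a,\min(b,c))$. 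For $c < b$ I would apply the $c \geq b$ case to $\mathcal{RSK}_{a,b} \circ \mathcal{RSK}_{a,c}^{-1}$ and invert: inverting a composition of toggles reverses their order but preserves their locations (each toggle is an involution), so $\mathcal{RSK}_{a,c} \circ \mathcal{RSK}_{a,b}^{-1}$ is again a toggle sequence, now supported strictly right of $(a,c) = (a,\min(b,c))$.

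The telescoping and the cancellation of $\phi$ are routine once the transpose of Lemma~\ref{Lemma:proidRSK} is in hand; I expect the only real care to be in the bookkeeping of the ``strictly left/right'' conventions (keeping straight that larger $i-j$ means further left) and in the order-reversal for the $c<b$ case, which is the main, if modest, obstacle.
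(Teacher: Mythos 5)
Your proof is correct and follows essentially the same route as the paper: telescoping the transpose of Lemma~\ref{Lemma:proidRSK} for $c \geq b$ and reducing the case $c < b$ to it by inverting the toggle sequence. Your explicit verification of the ``in particular'' clause via the support of each $\idpro_{a,k}$ is a detail the paper leaves implicit, but the argument is the same.
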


\begin{proof} If $b \geq c$, consider the inverse:
\[\left( \mathcal{RSK}_{a,c} \circ \mathcal{RSK}_{a,b}^{-1} \right)^{-1} = \mathcal{RSK}_{a,b} \circ \mathcal{RSK}_{a,c}^{-1}.\]
Toggles are involutions, so the inverse of a composition of toggles is the sequence of toggles applied in the reverse order. Thus it suffices to prove the case $b \leq c$. In this case, write $\mathcal{RSK}_{a,c} \circ \mathcal{RSK}_{a,b}^{-1}$ as
\begin{multline*}
(\mathcal{RSK}_{a,c} \circ \mathcal{RSK}_{a,c-1}^{-1}) \circ (\mathcal{RSK}_{a,c-1} \circ \mathcal{RSK}_{a,c-2}^{-1}) \circ \dots \circ (\mathcal{RSK}_{a,b+1} \circ \mathcal{RSK}_{a,b}^{-1})\\
    = \idpro_{a,c-1} \circ \cdots \circ \idpro_{a,b}
\end{multline*}
by the transpose of Lemma~\ref{Lemma:proidRSK}.
\end{proof}

\begin{Ex} 
\label{Ex:doubleRSKExample}
Consider the case of $r = a = b = 3$ and $s = c = 4$. In Figure~\ref{fig:doubleRSKCancelation} the yellow subrectangles of toggles cancel since toggles are involutions. The blue rectangles also cancel since they can commute past the remaining toggles between them.

\begin{figure}
    \centering
    \begin{tikzpicture}[scale = 0.65]
    
        \tikzstyle{fancyR}=[circle, draw, fill=red, inner sep=0pt, minimum width=4.6pt]
        \foreach \z in {0,3,6,9,13}{
        \begin{scope}[rotate = 45, shift = {(\z,-\z)}]
            \draw (0,0) grid (3,2);
            
            \foreach \x in {0,...,3}{
                \foreach \y in {0,...,2}{
                    \node[bigB] at (\x,\y) {};
                }
            }
        \end{scope}
        }
        
        \begin{scope}[rotate = 45, shift={(0,0)}]
        \draw [fill = blue, opacity = 0.3] (-0.3,-0.3)--(0.3,-0.3)--(0.3,0.3)--(-0.3,0.3)--cycle;
        
        \node[fancyR] at (0,0) {};
        \node[fancyR] at (1,0) {};
        
        \draw [-stealth,shift={(1,-1)},ultra thick] (0,0)--(0.7,0.7);
        \end{scope}
        
        \begin{scope}[rotate = 45, shift={(3,-3)}]
        \draw [fill = yellow, opacity = 0.35] (-0.3,-0.3)--(1.3,-0.3)--(1.3,1.3)--(-0.3,1.3)--cycle;
        
        \node[fancyR] at (0,0) {};
        \node[fancyR] at (1,0) {};
        \node[fancyR] at (2,0) {};
        
        \node[fancyR] at (0,1) {};
        \node[fancyR] at (1,1) {};
        \node[fancyR] at (2,1) {};
        
        \draw [-stealth,shift={(1,-1)},ultra thick] (0,0)--(0.7,0.7);
        \end{scope}
        
        \begin{scope}[rotate = 45, shift={(6,-6)}]
        \draw [fill = yellow, opacity = 0.35] (-0.3,-0.3)--(1.3,-0.3)--(1.3,1.3)--(-0.3,1.3)--cycle;
        
        \node[fancyR] at (0,0) {};
        \node[fancyR] at (1,0) {};
        
        \node[fancyR] at (0,1) {};
        \node[fancyR] at (1,1) {};
        
        \draw [stealth-,shift={(1,-1)},ultra thick] (0,0)--(0.7,0.7);
        \end{scope}
        
        \begin{scope}[rotate = 45, shift={(9,-9)}]
        \draw [fill = blue, opacity = 0.3] (-0.3,-0.3)--(0.3,-0.3)--(0.3,0.3)--(-0.3,0.3)--cycle;
        
        \node[fancyR] at (0,0) {};
        
        \draw [stealth-,shift={(1,-1)},ultra thick] (0,0)--(0.7,0.7);
        \end{scope}
        
        \node at (16,1.7) {\Large{=}};
        
        \begin{scope}[rotate = 45, shift={(13,-13)}]
        \node[fancyR] at (1,0) {};
        \node[fancyR] at (2,0) {};
        
        \node[fancyR] at (2,1) {};
        
        \draw [-stealth,shift={(1.8,-1.2)},ultra thick] (0,0)--(-0.7,0.7);
        \end{scope}
        
    \end{tikzpicture}
    \caption{The composition $\RSK_{3,4} \circ \RSK^{-1}_{3,3}$ of toggles in $R = [3] \times [4]$. Toggles shown in red are composed in the order indicated by the arrow beneath the rectangle, from the rightmost rectangle to the leftmost. Toggles in the highlighted regions cancel out.}
    \label{fig:doubleRSKCancelation}
\end{figure}
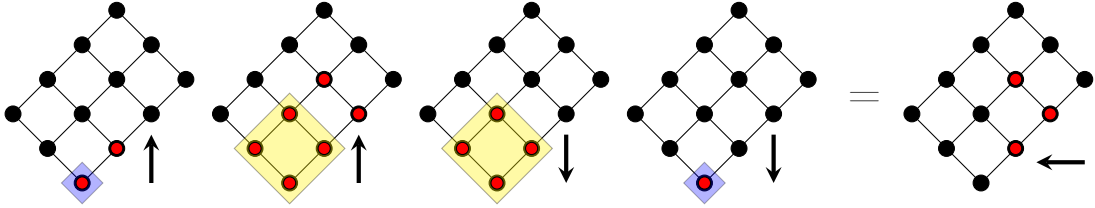
\end{Ex}

We can use Proposition~\ref{Prop:RSKCancellation} to prove that $\RSK_{a,b}^{-1} \circ \RSK_{a,c}$ preserves certain chain statistics.

\begin{Lemma}
\label{Lemma:doubleRSK}
Let $a \in [r]$ and let $b,c \in [s+1]$. Then for all $\ell \leq j \leq  \min(b,c)$,
\[H_{1,\ell}^{r,j}(x;k) = H_{1,\ell}^{r,j}\left(\RSK^{-1}_{a,b} \circ \RSK_{a,c}(x);k\right).\]
\end{Lemma}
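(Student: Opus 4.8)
The plan is to reduce the statement to an identity between the images of $x$ under $\RSK_{a,b}$ and $\RSK_{a,c}$, and then to read off the chain statistics using Greene's theorem (Theorem~\ref{Th:plGreene}). Throughout, write $M = \RSK^{-1}_{a,b} \circ \RSK_{a,c}$, $w = \RSK_{a,b}(x)$, and $z = \RSK_{a,c}(x)$, so that $\RSK_{a,b}(M(x)) = z$. First I would reduce to the case $b \le c$ (so $\min(b,c)=b$): a map preserves a statistic if and only if its inverse does, and $M^{-1} = \RSK^{-1}_{a,c} \circ \RSK_{a,b}$ is precisely the instance of the claim with the roles of $b$ and $c$ exchanged, so applying the $b \le c$ case to $M^{-1}$ and substituting $x \mapsto M(x)$ yields the $b \ge c$ case.

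The structural heart of the argument is to show that $w$ and $z$ agree outside the region strictly right of $(a,b)$. For up-diagonals $i > a$ this is immediate from Proposition~\ref{miniRSK}, since there both $w_{ij}$ and $z_{ij}$ equal $\phi^{-1}(x)_{ij}$. On the set $\mathcal{A} = \{(i',j') \in [a]\times[b] : i'-j' \ge a-b\}$ (the $P$-side of the subrectangle $[a]\times[b]$), I would argue as follows: for each $j' \le b \le c$, applying Greene's theorem to the subrectangles $[a]\times[b]$ and $[a]\times[c]$ (each has $a$ as its top up-diagonal, and each contains $[a]\times[j']$) gives, via Proposition~\ref{miniRSK},
\[\sum_{t=0}^{k-1} w_{a-t,\,j'-t} = H_{1,1}^{a,j'}(x;k) = \sum_{t=0}^{k-1} z_{a-t,\,j'-t}\]
for every $k$. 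Since these file partial sums agree for all $k$, the entries agree: $w = z$ on $\mathcal{A}$. (This is consistent with, and can alternatively be extracted from, the localization in Proposition~\ref{Prop:RSKCancellation}, which says $w$ and $z$ differ only at elements strictly right of $(a,b)$.) Applying Greene's theorem a second time, now to $M(x)$, gives $\sum_{t} \RSK_{a,b}(M(x))_{a-t,j'-t} = \sum_t z_{a-t,j'-t} = H_{1,1}^{a,j'}(x;k)$, and hence the clean conclusion
\[H_{1,1}^{a,j'}(M(x);k) = H_{1,1}^{a,j'}(x;k) \qquad \text{for all } j' \le b \text{ and all } k.\]
I would also record that $M$ fixes the chain coordinates in all up-diagonals $i > a$: in order coordinates the toggles comprising $M$ all lie in up-diagonals $i < a$, and neither $\phi$ nor $\phi^{-1}$ propagates a change upward across up-diagonal $a$.

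It remains to upgrade these corner statistics (anchored at $(1,1)$ and reaching only up-diagonal $a$) to the general statistics $H_{1,\ell}^{r,j}$ with $j \le b$, and \emph{this is the step I expect to be the main obstacle}. The intended mechanism is a Lindström--Gessel--Viennot concatenation across up-diagonal $a$: a maximum-weight family of $k$ nonintersecting paths from the $(1,\ell)$-anchors to the $(r,j)$-anchors is cut at up-diagonal $a$ into a lower family (confined to up-diagonals $\le a$ and columns $\le j \le b$) and an upper family (confined to up-diagonals $> a$), so that $H_{1,\ell}^{r,j}(y;k)$ equals the maximum over crossing data of a lower weight plus an upper weight. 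The upper weight depends only on coordinates that $M$ fixes, so the problem reduces to showing that $M$ preserves the lower statistics $H_{1,\ell}^{a,\cdot}$ with arbitrary starting column $\ell$. These general-$\ell$ statistics must in turn be recovered from the preserved corner data — equivalently, from the fact that $M(x)$ and $x$ share the same $P$-tableau on $\mathcal{A}$ for the $[a]\times[b]$ subrectangle. Making this recovery precise (a further LGV/complementation reduction of arbitrarily anchored northeast-chain maxima to the corner-anchored family), and handling the fact that the shared up-diagonal $a$ itself may change, is the delicate point. I note that an alternative organization is available via the telescoping $M = \prod_{d=b}^{c-1}\bigl(\RSK^{-1}_{a,d}\circ\RSK_{a,d+1}\bigr)$, where Lemma~\ref{Lemma:proidRSK} identifies each factor as the conjugated promotion $\RSK^{-1}_{a,d}\circ \idpro_{a,d}\circ \RSK_{a,d}$; since $d \ge b \ge j$ for every factor, it suffices to treat a single step, for which the constituent rowmotions can be fed into the chain shifting lemma (Lemma~\ref{Lemma:orderidealshifting}). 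As with all results in this section, the argument uses only Greene's theorem, chain shifting, and commutation of toggles, so it holds verbatim on the piecewise-linear and birational levels.
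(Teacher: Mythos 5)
Your reduction to $b \le c$, the observation (via Proposition~\ref{Prop:RSKCancellation} and Proposition~\ref{miniRSK}) that $\RSK_{a,b}(x)$ and $\RSK_{a,c}(x)$ agree at all coordinates weakly left of $(a,b)$, and the resulting identity $H_{1,1}^{a,j'}(x;k) = H_{1,1}^{a,j'}(M(x);k)$ are all correct, and the first of these matches the paper. But there is a genuine gap exactly where you flag one: everything you actually establish concerns corner statistics $H_{1,1}^{a,\cdot}$, whereas the lemma asserts preservation of $H_{1,\ell}^{r,j}$ for arbitrary $\ell$ and top index $r$, and neither promotion --- from $a$ up to $r$, or from $\ell=1$ to general $\ell$ --- is carried out. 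The concatenation across up-diagonal $a$ does not reduce to the statistics you have preserved: cutting $k$ nonintersecting paths at up-diagonal $a$ produces lower families with arbitrary, generally non-consecutive, prescribed endpoints on that up-diagonal, which are not of the form $H_{1,\ell}^{a,j'}(\cdot;k)$, and up-diagonal $a$ itself is not fixed by $M$. Worse, the ``LGV/complementation reduction'' you invoke to recover general-$\ell$ statistics from corner data is precisely the argument the paper warns against immediately before this lemma: it uses subtraction and does not tropicalize, so your closing claim that the argument holds piecewise-linearly is unsupported. The subtraction-free version of that recovery is Lemma~\ref{Lemma:samePTableaux}, but that lemma is deduced from Corollary~\ref{Cor:shiftsinonedirection}, which is itself a corollary of the present lemma, so appealing to it here would be circular.

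The paper's proof avoids all of this with two ideas absent from your proposal. For the base case $\ell=1$ it conjugates by $\RSK_{r,b}$: since $\RSK_{a,c}\circ\RSK_{a,b}^{-1}$ is supported strictly right of $(a,b)$ while $\RSK_{r,b}\circ\RSK_{a,b}^{-1}$ is supported strictly left, they commute, yielding $\RSK_{r,b}^{-1}\circ\RSK_{a,c}\circ\RSK_{a,b}^{-1}\circ\RSK_{r,b} = \RSK_{a,b}^{-1}\circ\RSK_{a,c}$; applying Greene's theorem at level $r$ (not $a$) then gives preservation of $H_{1,1}^{r,j}$ for the full rectangle directly, with no concatenation across up-diagonal $a$. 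For $\ell>1$ it inducts on $\ell$ by writing $\mathcal{RSK}_{a,b}^{-1}\circ\mathcal{RSK}_{a,c} = \rho_{r,b-1}\circ\mathcal{RSK}_{a-1,b-1}^{-1}\circ\mathcal{RSK}_{a-1,c-1}\circ\rho_I^{-1}$, where $I$ is the order ideal generated by $(a-1,c-1)$ and $(r,b-1)$, and applying the chain shifting lemma (Lemma~\ref{Lemma:orderidealshifting}) to the two outer rowmotions to trade $H_{1,\ell}^{r,j}$ for $H_{1,\ell-1}^{r,j-1}$. Your one-sentence telescoping remark gestures toward this second idea, but it neither supplies this decomposition nor the chain-shifting bookkeeping that makes it work.
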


On the birational level, Lemma~\ref{Lemma:doubleRSK} can be proven easily using Greene's theorem and the Lindstr\"om-Gessel-Viennot lemma. However, this proof uses subtraction and therefore does not tropicalize to the piecewise-linear case. By using only Greene's theorem and the chain shifting lemma, we can give a proof that also works in the piecewise-linear realm.

\begin{proof}
Without loss of generality, suppose that $b \leq c$. We start with the base case $\ell = 1$. By Proposition~\ref{Prop:RSKCancellation}, $\RSK_{a,c} \circ \RSK^{-1}_{a,b}$ is equivalent to a composition of toggles that are strictly right of $(a,b)$. Similarly, $\RSK_{r,b} \circ \RSK_{a,b}^{-1}$ is equivalent to a composition of toggles that are strictly left of $(a,b)$. Since there is a file of elements separating these, they commute with each other by Observation~\ref{Obs:togglesCommute}. Therefore
\[({\RSK_{a,c}} \circ \RSK_{a,b}^{-1}) \circ ({\RSK_{r,b}} \circ \RSK_{a,b}^{-1}) = ({\RSK_{r,b}} \circ \underline{\RSK_{a,b}^{-1}) \circ (\RSK_{a,c}} \circ \RSK_{a,b}^{-1}).\]
Solving for the underlined quantity gives
\[\RSK_{r,b}^{-1} \circ \RSK_{a,c} \circ \RSK_{a,b}^{-1} \circ \RSK_{r,b} = \RSK_{a,b}^{-1} \circ \RSK_{a,c}.\]

By Greene's Theorem (Theorem~\ref{Th:plGreene}) and Proposition~\ref{miniRSK},
\[H_{1,1}^{r,j}(x;k) = \sum\limits_{m = 0}^{k-1}\RSK_{r,b}(x)_{r-m,j-m}.\]
Recall that $\RSK_{a,c} \circ \RSK^{-1}_{a,b}$ is equivalent to a composition of toggles that are strictly right of $(a,b)$. Since $r \geq a$ and $j \leq b$, $(r-m,j-m)$ is weakly left of $(a,b)$, so these entries are unaffected by $\RSK_{a,c} \circ \RSK^{-1}_{a,b}$. Hence
\begin{align*}
    \sum\limits_{\ell = 0}^{k-1}\RSK_{r,b}(x)_{r-m,j-m} &= \sum\limits_{m = 0}^{k-1} \RSK_{a,c} \circ \RSK^{-1}_{a,b} \circ \RSK_{r,b}(x)_{r-m,j-m}\\
    &= H_{1,1}^{r,j}(\RSK_{r,b}^{-1} \circ \RSK_{a,c} \circ \RSK_{a,b}^{-1} \circ \RSK_{r,b}(x);k)\\
    &= H_{1,1}^{i,j}\left(\RSK^{-1}_{a,b} \circ \RSK_{a,c}(x);k\right)
\end{align*}
by another application of Greene's Theorem.


For the induction step, let $I$ be the order ideal generated by $(a-1,c-1)$ and $(r,b-1)$, and let $J = I \setminus ([a-1] \times [c-1])$. Then
\begin{align*}
    \mathcal{RSK}_{a,b}^{-1} \circ \mathcal{RSK}_{a,c} 
    &= (\rho_{a-1,b-1} \circ \mathcal{RSK}_{a-1,b-1}^{-1}) \circ \rho_{J} \circ \rho_{J}^{-1} \circ (\mathcal{RSK}_{a-1,c-1} \circ \rho_{a-1,c-1}^{-1}).
\end{align*}
Since $J$ lies strictly above the $(a-1)$st up-diagonal, while all toggles in $\mathcal{RSK}_{a-1,b-1}^{-1}$ and $\mathcal{RSK}_{a-1,c-1}$ lie strictly below the $(a-1)$st up-diagonal, by Observation~\ref{Obs:togglesCommute} we can commute the RSK's to the inside to obtain
\begin{multline*}
    \rho_{a-1,b-1} \circ \rho_{J} \circ \mathcal{RSK}_{a-1,b-1}^{-1} \circ  \mathcal{RSK}_{a-1,c-1} \circ \rho_{J}^{-1} \circ \rho_{a-1,c-1}^{-1} \\
    =\rho_{r,b-1} \circ \mathcal{RSK}_{a-1,b-1}^{-1} \circ \mathcal{RSK}_{a-1,c-1} \circ \rho_{I}^{-1}.
\end{multline*}
Applying Lemma~\ref{Lemma:orderidealshifting} with $\rho_{r,b-1}$ and using the inductive hypothesis gives
\begin{align*}
    H_{1,\ell}^{r,j}&\left(\RSK^{-1}_{a,b} \circ \RSK_{a,c}(x);k\right)\\
    &= H_{1,\ell}^{r,j}\left(\phi \circ \rho_{r,b-1} \circ \mathcal{RSK}_{a-1,b-1}^{-1} \circ \mathcal{RSK}_{a-1,c-1} \circ \rho_{I}^{-1} \circ \phi^{-1}(x);k\right) \\
    &= H_{1,\ell-1}^{r,j-1}\left(\phi \circ  \mathcal{RSK}_{a-1,b-1}^{-1} \circ \mathcal{RSK}_{a-1,c-1} \circ \rho_{I}^{-1} \circ \phi^{-1}(x);k\right)\\
    &=H_{1,\ell-1}^{r,j-1}\left(\phi \circ  \rho_I^{-1} \circ \phi^{-1}(x);k\right).
\end{align*}
Applying Lemma~\ref{Lemma:orderidealshifting} again with $\rho_I$ shows that this equals $H_{1,\ell}^{r,j}\left(x;k\right)$, as desired.
\end{proof}

For $\Omega=\RSK^{-1} \circ \proid \circ \RSK$, we have two distinct chain shifting lemmas, which both hold on the piecewise-linear and birational levels. While they are difficult to prove in the coordinates $x$ and $\phi^{-1}(x)$, it is natural to prove them in the RSK coordinates instead. 

\begin{Cor} \label{Cor:shiftsinonedirection} \label{Cor:preservedinonedirection}
Let $x \in \mathbb R^R$, and $k \in \mathbb Z_{\geq 0}$.
\begin{enumerate}[(a)]
\item For all $1 \leq u \leq v \leq r$,
\[H_{u,1}^{v,s}(x;k) = H_{u,1}^{v,s}(\Omega(x);k).\]
\item For all $1 < u \leq v \leq s$,
\[H_{1,u}^{r,v}(x;k) = H_{1,u-1}^{r,v-1}\left( \Omega(x);k\right).\]
\end{enumerate}
\end{Cor}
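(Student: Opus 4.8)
The plan is to work throughout with the variant maps $\RSK_{a,b}$ and to exploit the identity $\Omega = \RSK_{r,s}^{-1}\circ\RSK_{r+1,s}$ from Corollary~\ref{cor:omega}, combined with Lemma~\ref{Lemma:doubleRSK}, its transpose, and the (transposed) chain shifting Lemma~\ref{Lemma:orderidealshifting}. The two parts are genuinely different in character: (a) is a pure \emph{preservation} of chains that are full in the down-diagonal direction, while (b) is a \emph{shift} of chains that are full in the up-diagonal direction. This difference will be mirrored in how $\Omega$ is decomposed.

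For part (a) I would simply invoke the transpose of Lemma~\ref{Lemma:doubleRSK}. Since $\RSK$, promotion, the statistics $H$, and chain shifting are all symmetric under transposing the rectangle (interchanging up- and down-diagonals and $\proid$ with $\idpro$, as the paper does throughout), Lemma~\ref{Lemma:doubleRSK} applied in $[s]\times[r]$ and translated back to $R$ states that $\RSK_{b,s}^{-1}\circ\RSK_{c,s}$ preserves $H_{\ell,1}^{j,s}(\,\cdot\,;k)$ for all $\ell\le j\le\min(b,c)$. Taking $b=r$ and $c=r+1$ makes this map exactly $\Omega$ and gives $\min(b,c)=r$, so that $H_{u,1}^{v,s}(x;k)=H_{u,1}^{v,s}(\Omega(x);k)$ for all $u\le v\le r$, which is precisely (a). The only thing to verify is the transpose bookkeeping: that $H_{1,\ell}^{r,j}$ transposes to $H_{\ell,1}^{j,s}$ and that $\RSK_{a,c}$ transposes to $\RSK_{c,a}$.

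For part (b) the key is to split $\Omega$ into one factor that the \emph{shifted} statistic cannot see and a single rowmotion that produces the shift. Using the identity $\mathcal{RSK}_{r+1,s} = \mathcal{RSK}_{r,s-1}\circ\rho_{r,s-1}^{-1}$ recorded before Proposition~\ref{miniRSK}, I would rewrite
\[\Omega = \RSK_{r,s}^{-1}\circ\RSK_{r+1,s} = \left(\RSK_{r,s}^{-1}\circ\RSK_{r,s-1}\right)\circ\left(\phi\circ\rho_{r,s-1}^{-1}\circ\phi^{-1}\right).\]
Writing $y = \phi\circ\rho_{r,s-1}^{-1}\circ\phi^{-1}(x)$, so that $\Omega(x) = \RSK_{r,s}^{-1}\circ\RSK_{r,s-1}(y)$, the first factor is exactly the map treated by Lemma~\ref{Lemma:doubleRSK} with $a=r$, $b=s$, $c=s-1$; hence it preserves $H_{1,\ell}^{r,j}$ for $\ell\le j\le s-1$, and with $\ell=u-1$, $j=v-1$ (legitimate since $1\le u-1\le v-1\le s-1$) this gives
\[H_{1,u-1}^{r,v-1}(\Omega(x);k) = H_{1,u-1}^{r,v-1}(y;k).\]
On the other hand, since $[r]\times[v-1]\subseteq[r]\times[s-1]$, the transpose of Lemma~\ref{Lemma:orderidealshifting} applied with the principal order ideal $[r]\times[s-1]$ generated by $(r,s-1)$ yields
\[H_{1,u}^{r,v}(x;k) = H_{1,u-1}^{r,v-1}\!\left(\phi\circ\rho_{r,s-1}^{-1}\circ\phi^{-1}(x);k\right) = H_{1,u-1}^{r,v-1}(y;k).\]
Combining the two displays gives $H_{1,u}^{r,v}(x;k) = H_{1,u-1}^{r,v-1}(\Omega(x);k)$, which is (b).

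The main obstacle is isolating the correct decomposition of $\Omega$ in part (b): one must peel off precisely the single rowmotion $\rho_{r,s-1}$ so that, after conjugating by $\phi$, the \emph{same} labeling $y$ is produced both by Lemma~\ref{Lemma:doubleRSK} (which absorbs the double-$\RSK$ factor without moving the shifted statistic) and by a single application of chain shifting (which supplies the $u\to u-1$, $v\to v-1$ shift). Contrast this with the proof of Lemma~\ref{Lemma:doubleRSK} itself, where two chain shifts cancel to give preservation; here the whole content is arranging that exactly one chain shift survives. I would also take care that Lemma~\ref{Lemma:doubleRSK} is being used with $b>c$, which is permitted because its proof reduces the general case to $b\le c$ by passing to inverses.
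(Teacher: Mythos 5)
Your proposal is correct and follows essentially the same route as the paper: part (a) via the transposed Lemma~\ref{Lemma:doubleRSK} applied to $\Omega=\RSK_{r,s}^{-1}\circ\RSK_{r+1,s}$, and part (b) via the decomposition $\mathcal{RSK}_{r+1,s}=\mathcal{RSK}_{r,s-1}\circ\rho_{r,s-1}^{-1}$, using Lemma~\ref{Lemma:doubleRSK} to absorb the double-RSK factor and Lemma~\ref{Lemma:orderidealshifting} to supply the shift. Your intermediate labeling $y=\phi\circ\rho_{r,s-1}^{-1}\circ\phi^{-1}(x)$ is exactly the labeling appearing in the paper's displayed equation, and your index bookkeeping is accurate.
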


\begin{proof}
Recall from Corallary~\ref{cor:omega} that $\Omega = \RSK_{r,s}^{-1} \circ \RSK_{r+1,s}$.
Part (a) then follows from the transposed version of Lemma~\ref{Lemma:doubleRSK}. For part (b), note that
\[\mathcal{RSK}_{r,s}^{-1} \circ \mathcal{RSK}_{r+1,s} = \mathcal{RSK}_{r,s}^{-1} \circ \mathcal{RSK}_{r,s-1} \circ \rho^{-1}_{r,s-1}.\]
By Lemma~\ref{Lemma:doubleRSK} we have
\[H_{1,u-1}^{r,v-1}\left( \phi \circ \mathcal{RSK}_{r,s}^{-1} \circ \mathcal{RSK}_{r,s-1} \circ \rho^{-1}_{r,s-1} \circ \phi^{-1}(x);k \right) = H_{1,u-1}^{r,v-1}\left( \phi \circ \rho^{-1}_{r,s-1} \circ \phi^{-1}(x);k \right).\]
The result then follows from Lemma~\ref{Lemma:orderidealshifting}.
\end{proof}

Recall that the $P$-tableau of a labeling $x$ of $[r] \times [s]$ is the set of coordinates of $\RSK(x)$ that lie weakly left of $(r,s)$. Knowing the $P$-tableau of a labeling tells us more chain statistics than just those appearing in $\RSK$. Using $\Omega$, we can prove the following lemma.

\begin{Lemma}
\label{Lemma:samePTableaux}
Let $x$ and $\widetilde{x}$ be labelings of $R$. The following are equivalent:
\begin{enumerate}
    \item $P(x) = P(\widetilde{x})$.
    \item For all $1 \leq u \leq v \leq s$ and $k \in \mathbb{Z}_{\geq 0}$
    \[H_{1,u}^{r,v}(x;k) = H_{1,u}^{r,v}(\widetilde{x};k).\]
\end{enumerate}
\end{Lemma}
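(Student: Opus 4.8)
The plan is to characterize the $P$-tableau entirely in terms of the chain statistics $H_{1,u}^{r,v}(\cdot\,;k)$, from which the equivalence follows immediately. The key observation is that the $P$-tableau consists of the coordinates $\RSK(x)_{ij}$ with $(i,j)$ weakly left of $(r,s)$, i.e.\ those with $i - j \geq r - s$. I want to express each such coordinate as a combination of the $H$-statistics appearing in statement (2), which range over intervals of the form $[1,u]\times[r,v]$ anchored at row $r$.

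First I would recall that piecewise-linear Greene's Theorem (Theorem~\ref{Th:plGreene}) gives, for any $(i,j)$ with $i = r$ or $j = s$,
\[
\sum_{t=0}^{k-1} \RSK(x)_{i-t,j-t} = H_{1,1}^{i,j}(x;k).
\]
The right-hand sides here are exactly the statistics $H_{1,1}^{r,j}(x;k)$ when $i=r$, which are a special case ($u=1$) of the family in statement (2). These determine the anti-diagonal partial sums $\sum_{t=0}^{k-1}\RSK(x)_{r-t,j-t}$, and by taking successive differences in $k$ they determine every individual entry $\RSK(x)_{r-t,j-t}$ along each down-diagonal meeting the bottom row. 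But these down-diagonals sweep out precisely the part of $R$ weakly left of $(r,s)$, namely all $(i,j)$ with $i-j \geq r-s$, which is exactly the $P$-tableau. Thus statement (2) at $u=1$ already forces $P(x) = P(\widetilde{x})$, giving the implication $(2)\Rightarrow(1)$.

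For the converse $(1)\Rightarrow(2)$, I would show that the full family of statistics $H_{1,u}^{r,v}(x;k)$ (not just $u=1$) is determined by the $P$-tableau. The natural tool is Corollary~\ref{Cor:shiftsinonedirection}(b), which states $H_{1,u}^{r,v}(x;k) = H_{1,u-1}^{r,v-1}(\Omega(x);k)$ for $1 < u \leq v \leq s$. Iterating $\Omega$ shifts the interval leftward until it reaches the boundary case $u=1$, reducing an arbitrary $H_{1,u}^{r,v}$ to a statistic of the form $H_{1,1}^{r,v-u+1}(\Omega^{u-1}(x);k)$, which by Greene's Theorem is a partial sum of $P$-tableau entries of $\Omega^{u-1}(x)$. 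The crux is therefore to verify that $\Omega$ preserves the $P$-tableau: since $\Omega = \RSK^{-1} \circ \proid \circ \RSK$ and $\proid$ acts only on the subposet $S$ weakly left of $(r,s)$ — that is, it modifies only the $P$-coordinates while fixing the $Q$-coordinates — applying $\RSK^{-1}$ recovers a labeling whose $P$-tableau under $\RSK$ is again supported on the same down-diagonals. More carefully, $P(\Omega(x))$ is obtained from $P(x)$ by applying piecewise-linear promotion, so $P(x)=P(\widetilde x)$ gives $P(\Omega(x)) = P(\Omega(\widetilde x))$, and inductively $P(\Omega^{u-1}(x)) = P(\Omega^{u-1}(\widetilde x))$.

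The main obstacle I anticipate is making the base-case reduction in the converse fully rigorous: I must confirm that each $H_{1,u}^{r,v}$ genuinely collapses to the $u=1$ case under repeated application of Corollary~\ref{Cor:shiftsinonedirection}(b) without the interval degenerating or leaving the valid range $1 \leq u \leq v \leq s$, and that the resulting $u=1$ statistic is indeed a partial sum of $P$-tableau entries (rather than entries depending on the $Q$-side). Once the invariance $P(\Omega(x))$-as-a-function-of-$P(x)$ is established, both implications close cleanly, and the argument lives entirely on the piecewise-linear (and birational) level, using only Greene's Theorem and the chain shifting corollary.
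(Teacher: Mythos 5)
Your proposal is correct and follows essentially the same route as the paper: $(2)\Rightarrow(1)$ and the $u=1$ case of $(1)\Rightarrow(2)$ via Greene's Theorem (Theorem~\ref{Th:plGreene}), and the general case by iterating Corollary~\ref{Cor:shiftsinonedirection}(b) to reduce to $H_{1,1}^{r,v-u+1}(\RSK^{-1}\circ(\proid)^{u-1}\circ\RSK(x);k)$, together with the observation that the toggles in $\proid$ lie strictly left of $(r,s)$ and hence depend only on $P(x)$. The range check you flag as a potential obstacle is immediate (iterating $(u,v)\mapsto(u-1,v-1)$ lands at $(1,v-u+1)$ with $1\le v-u+1\le s$), so there is no gap.
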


\begin{proof}
Obviously (2) implies (1) by Greene's Theorem, and (1) implies the case of (2) where $u=1$. If $u > 1$, then by Corollary~\ref{Cor:shiftsinonedirection}(b),
\[H_{1,u}^{r,v}(x;k) = H_{1,1}^{r,v-u+1}(\RSK^{-1} \circ \left(\proid\right)^{u-1} \circ \RSK(x);k).\]
Toggles in $\left(\proid\right)^{u-1}$ lie strictly left of $(r,s)$, and so they depend only on coordinates weakly left of $(r,s)$. But the coordinates weakly left of $(r,s)$ are in $P(x)$. Consequently if $P(x) = P(\widetilde{x})$, then these $P$-tableaux remain equal after applications of $\proid$.
\end{proof}

In \cite{johnsonliu}, the present authors show that one can use Lemma~\ref{Lemma:orderidealshifting} to compute all coordinates of ${\RSK} \circ \phi \circ \rho^{-1} \circ \phi^{-1}(x)$ except those in the file containing $(r,s)$. 
In this sense, the chain shifting lemma quantifiably ``knows'' most of the information about piecewise-linear rowmotion. For $\RSK^{-1} \circ \RSK_{r+1,s}$, we have the following stronger statement.

\begin{Cor}
The function $\Omega \colon \mathbb{R}^R \to \mathbb{R}^R$ is the unique function satisfying Corollary~\ref{Cor:shiftsinonedirection}. 
\end{Cor}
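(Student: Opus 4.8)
The plan is to prove that $\Omega$ is the unique function $f\colon \mathbb{R}^R \to \mathbb{R}^R$ satisfying the two chain-shifting identities of Corollary~\ref{Cor:shiftsinonedirection}; that is, if $f$ satisfies both
\[
H_{u,1}^{v,s}(x;k) = H_{u,1}^{v,s}(f(x);k) \quad (1 \le u \le v \le r)
\]
and
\[
H_{1,u}^{r,v}(x;k) = H_{1,u-1}^{r,v-1}(f(x);k) \quad (1 < u \le v \le s)
\]
for all $x$ and $k$, then $f = \Omega$. Since $\Omega$ itself satisfies both by Corollary~\ref{Cor:shiftsinonedirection}, it suffices to show that these two families of identities determine the image point $f(x)$ uniquely. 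The natural strategy is to argue that the two families together pin down enough chain statistics of $f(x)$ to recover all of its coordinates via Greene's Theorem, after passing to RSK coordinates.

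First I would reduce the uniqueness question to recovering $\RSK(f(x))$ coordinate by coordinate, since $\RSK$ is a bijection (it is a composition of invertible toggles and $\phi^{-1}$). By Greene's Theorem (Theorem~\ref{Th:plGreene}), the entries of $\RSK(y)$ along down-diagonals are determined by the quantities $H_{1,1}^{i,j}(y;k)$ with $i=r$ or $j=s$. Part (b) of the hypothesis, applied iteratively (shifting $u$ down to $1$ as in the proof of Lemma~\ref{Lemma:samePTableaux}), lets me express each $H_{1,u}^{r,v}(f(x);k)$ in terms of the known quantity $H_{1,u}^{r,v}(x;k)$; in particular taking the full column strips determines all statistics $H_{1,1}^{r,j}(f(x);k)$, which by Greene's Theorem computes the entire $P$-tableau $P(f(x))$ — equivalently all coordinates of $\RSK(f(x))$ weakly left of $(r,s)$. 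Symmetrically, part (a) of the hypothesis determines all statistics $H_{1,1}^{i,s}(f(x);k)$, hence all coordinates of $\RSK(f(x))$ weakly right of $(r,s)$, i.e.\ the $Q$-tableau $Q(f(x))$.

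The point where the two halves meet is the shared file containing $(r,s)$, and this is the step I expect to be the main obstacle: the coordinate $\RSK(f(x))_{r,s}$ lies on the common first row of the $P$- and $Q$-tableaux and must be assigned consistently by both families of identities. Here I would use that $\RSK(y)_{r,s} = H_{1,1}^{r,s}(y;1)$ by Greene's Theorem applied at $(r,s)$, and check that part (a) (with $u=v=r$, giving the statistic $H_{r,1}^{r,s}$) together with part (b) produces a single well-defined value — this is forced because $\Omega$ satisfies both and is a genuine function, so no inconsistency can arise, but writing down the cross-check carefully is the delicate bookkeeping. Once every coordinate of $\RSK(f(x))$ is shown to depend only on the chain statistics of $x$ (which are fixed data, independent of $f$), we conclude $\RSK(f(x)) = \RSK(\Omega(x))$ for all $x$, and applying $\RSK^{-1}$ gives $f = \Omega$.

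Concretely, the key steps in order are: (i) observe $\Omega$ satisfies the identities, so only uniqueness remains; (ii) show part (b) determines $P(f(x))$ entirely, via the iteration $H_{1,u}^{r,v}(f(x);k) = H_{1,1}^{r,v-u+1}((\proid)^{u-1}\text{-type image})$ already appearing in Lemma~\ref{Lemma:samePTableaux} combined with Greene; (iii) show part (a) symmetrically determines $Q(f(x))$; (iv) verify the two determinations agree on the file through $(r,s)$; and (v) conclude $\RSK(f(x))$ is fully determined by $x$, hence $f = \Omega$. The conceptual content is that the $P$-tableau and $Q$-tableau of $\Omega(x)$ are each captured by one of the two chain-shifting families, and RSK glues them into a complete and unique description.
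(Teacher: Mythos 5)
Your proposal is correct and takes essentially the same route as the paper: apply Greene's theorem to convert the two chain-shifting families into the down-diagonal partial sums of $\RSK(f(x))$, so that every coordinate of $\RSK(f(x))$ --- and hence $f(x)$ itself --- is forced by data depending only on $x$. One small correction: part (b) only yields $H_{1,1}^{r,j}(f(x);k)$ for $j<s$ (it requires $u>1$ and $v\le s$), so the file through $(r,s)$ is determined solely by part (a) via the $Q$-tableau, and the consistency check you flag as the main obstacle never actually arises.
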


\begin{proof}
Suppose $f\colon\mathbb{R}^R \to \mathbb{R}^R$ satisfies Corollary~\ref{Cor:shiftsinonedirection}. 
We will show that this information is sufficient to compute ${\RSK} \circ f$.

Let $x \in \mathbb{R}^R$, $j < s$, and $k \in \mathbb{Z}_{\geq 0}$. We seek to compute ${\RSK} \circ f(x)_{r-k,j-k}$. By Theorem~\ref{Th:plGreene},
\[\sum_{t=0}^{k}{\RSK} \circ f(x)_{r-t,j-t} = H_{1,1}^{r,j}(f(x);k+1).\]
By Corollary~\ref{Cor:shiftsinonedirection}(a), this equals $H_{1,2}^{r,j+1}(x;k+1)$,
and so
\[{\RSK} \circ f(x)_{r-k,j-k} = H_{1,2}^{r,j+1}(x;k+1) - H_{1,2}^{r,j+1}(x;k).\]
A similar argument using Corollary~\ref{Cor:preservedinonedirection}(b) computes ${\RSK} \circ f(x)_{i-k,s-k}$ for $i \leq r$.
\end{proof}

\section{The Ehrhart Theory of Northeast Chains} \label{section:fillings}

In this section, we associate to each moon polyomino a rational polytope and show that if two moon polyominoes are equivalent, then their corresponding polytopes have the same Ehrhart series. We cover the relevant definitions and background as needed.

\subsection{Stable set polytopes}
Let $G = (V,E)$ be a graph with vertex set $V$ and edge set $E$. A \emph{clique} in $G$ is a set $C \subseteq V$ such that any two vertices in $C$ are connected by an edge.
\begin{Def}
The \emph{clique constraint stable set polytope} $\qstab(G)$ is the intersection of half spaces in $\mathbb{R}^{V}$ of the form $x_v \geq 0$ for all $v \in V$ and $\sum\limits_{v \in C} x_v \leq 1$ for all (maximal) cliques $C$ of $G$.
\end{Def}
The inequality for a clique $C$ implies the corresponding inequality for any subset $S \subseteq C$ (due to nonnegativity of $x$), so it suffices to let $C$ range over maximal cliques.

While $\qstab(G)$ is always a rational polytope, it is not always a lattice polytope. $\qstab(G)$ is a lattice polytope if and only if $G$ is a \emph{perfect graph}---that is, neither the graph nor its complement contains an induced odd cycle of length at least 5 \cite{grotschellovaszschrijver}.

Let $\m$ be a moon polyomino. We define a graph $G_\m$ on the boxes of $\m$ by drawing an edge from $(u_1, v_1)$ to $(u_2, v_2)$ if $u_1 \leq v_1$, $u_2 \leq v_2$, and $[u_1, u_2] \times [v_1,v_2] \subseteq \m$. In other words, two boxes are connected by an edge if they are comparable in some rectangle poset $R \subseteq \m$. We write $\qm$ for $\qstab(G_\m)$. 

\begin{Ex}
Consider the moon polyomino $\mathcal{M}$ in Figure~\ref{fig:fiveCycle}. In $G_{\mathcal M}$:
\begin{itemize}
    \item There is no edge from $(2,1)$ to $(1,2)$ because these elements are incomparable in the subrectangle containing them. 
    \item There is an edge from $(1,1)$ to $(2,2)$ because these boxes are comparable in the rectangle $[1,2] \times [1,2] \subseteq \mathcal{M}$. Similarly we have an edge from $(2,2)$ to $(3,3)$.
    \item There is no edge from $(1,1)$ to $(3,3)$ because $(1,3) \not \in \mathcal{M}$, so $(1,1)$ and $(3,3)$ do not lie in a common rectangle.
\end{itemize}

Note that due to the induced $5$-cycle shown, $\qstab(\mathcal{M})$ is not a lattice polytope (in fact, $\mathcal M$ is the smallest such moon polyomino). Indeed, $\qstab(\mathcal M)$ contains the rational vertex $v$ where $v_{11}=v_{22}=v_{33}=v_{44}=v_{41}=\frac12$ and all other coordinates are $0$.


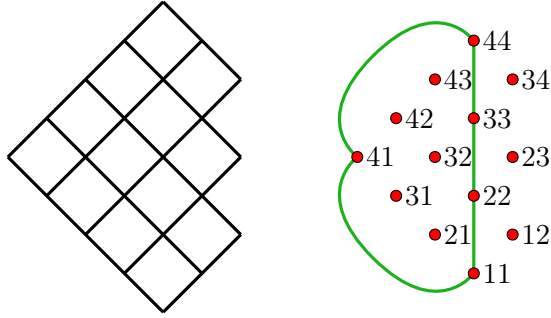
\begin{figure}
    \centering
    \begin{tikzpicture}[scale = 0.73, rotate = 45]
    
    \draw [very thick] (0,0)--(4,0);
    \draw [very thick] (0,-1)--(4,-1);
    \draw [very thick] (0,-2)--(4,-2);
    \draw [very thick] (0,-3)--(3,-3);
    \draw [very thick] (0,-4)--(2,-4);
    
    \draw [very thick] (0,0)--(0,-4);
    \draw [very thick] (1,0)--(1,-4);
    \draw [very thick] (2,0)--(2,-4);
    \draw [very thick] (3,0)--(3,-3);
    \draw [very thick] (4,0)--(4,-2);
    
    \begin{scope}[shift={(4,-4)}]
    \draw [myGreen, very thick] (3.5,-0.5)--(0.5,-3.5);
    \draw [myGreen, very thick] plot [smooth, tension=2] coordinates { (3.5,-0.5) (2,0.6) (0.5,-0.5)};
    \draw [myGreen, very thick] plot [smooth, tension=2] coordinates { (0.5,-3.5) (-0.6,-2) (0.5,-0.5)};
    
    \draw [fill=red] (0.5,-0.5) circle [radius=0.1cm];
    \draw [fill=red] (1.5,-0.5) circle [radius=0.1cm];
    \draw [fill=red] (2.5,-0.5) circle [radius=0.1cm];
    \draw [fill=red] (3.5,-0.5) circle [radius=0.1cm];
    
    \draw [fill=red] (0.5,-1.5) circle [radius=0.1cm];
    \draw [fill=red] (1.5,-1.5) circle [radius=0.1cm];
    \draw [fill=red] (2.5,-1.5) circle [radius=0.1cm];
    \draw [fill=red] (3.5,-1.5) circle [radius=0.1cm];
    
    \draw [fill=red] (0.5,-2.5) circle [radius=0.1cm];
    \draw [fill=red] (1.5,-2.5) circle [radius=0.1cm];
    \draw [fill=red] (2.5,-2.5) circle [radius=0.1cm];
    
    \draw [fill=red] (0.5,-3.5) circle [radius=0.1cm];
    \draw [fill=red] (1.5,-3.5) circle [radius=0.1cm];
    
    \begin{small}
    \node at (0.8,-0.8) {41};
    \node at (1.8,-0.8) {42};
    \node at (2.8,-0.8) {43};
    \node at (3.8,-0.8) {44};
    
    \node at (0.8,-1.8) {31};
    \node at (1.8,-1.8) {32};
    \node at (2.8,-1.8) {33};
    \node at (3.8,-1.8) {34};
    
    \node at (0.8,-2.8) {21};
    \node at (1.8,-2.8) {22};
    \node at (2.8,-2.8) {23};
    
    \node at (0.8,-3.8) {11};
    \node at (1.8,-3.8) {12};
    \end{small}
    \end{scope}
    
    \end{tikzpicture}
    \caption{A moon polyomino with 13 boxes and an induced 5-cycle in its associated graph. This is the smallest moon polyomino for which the associated clique constraint stable set polytope is not a lattice polytope.}
    \label{fig:fiveCycle}
\end{figure}
\end{Ex}

\subsection{Ehrhart theory}
The Ehrhart theory of $\qm$ will be particularly relevant, so we review some definitions and results here. Let $P \subseteq \mathbb{R}^d$ be a convex polytope. The \emph{Ehrhart function} of $P$ is the function $i_P\colon\mathbb{Z}_{\geq 0} \to \mathbb{Z}_{\geq 0}$ defined by
\[i_P(k) = \#\{\mathbb{Z}^d \cap kP\}.\]
When $P$ is a lattice polytope, $i_P(k)$ is a polynomial in $k$ called the \emph{Ehrhart polynomial} of $P$. When the vertices of $P$ are only known to be rational, the Ehrhart function is instead a \emph{quasi-polynomial} --- a function of the form
\[a_d(k)k^d + a_{d-1}(k)k^{d-1} + \cdots +a_0(k)\]
where each $a_i(k)$ is a periodic function.  In either case, the generating function of the Ehrhart function is called the \emph{Ehrhart series} of $P$ and is denoted $\Ehr_P(t)$.

For $\qstab(\mathcal{M})$, lattice points in the $k$th dilate are integer points $x$ satisfying $x_{ij} \geq 0$
for $(i,j) \in \mathcal{M}$ and
$\sum_{(i,j) \in C} x_{ij} \leq k$
for all maximal cliques $C$ in $G_\mathcal{M}$. Since each clique in $G_\mathcal{M}$ forms a chain in some maximal rectangle of $\m$, the lattice points in $k\qstab(\mathcal{M})$ are nonnegative fillings $x$ such that the maximum weight of all weak northeast chains (that is, $H_\m(x;1)$) is bounded above by $k$. Hence we have the following lemma.

\begin{Lemma}
The Ehrhart quasi-polynomial of $QSTAB(\mathcal{M})$ counts the number of nonnegative integer fillings $x$ of $\mathcal{M}$ with $H_\m(x;1) \leq k$.
\end{Lemma}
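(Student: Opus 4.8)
The plan is to unwind the definitions so that the lemma becomes essentially immediate from the setup already established in the excerpt. The Ehrhart quasi-polynomial $i_{\qm}(k)$ counts, by definition, the lattice points $x \in \mathbb{Z}^{\m}$ lying in the $k$th dilate $k\qm$. So the entire content of the proof is to identify these lattice points with the fillings $x$ satisfying $H_\m(x;1) \leq k$.

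First I would recall the half-space description of $\qstab(G_\m)$: a point $x$ lies in $\qm$ if and only if $x_{ij} \geq 0$ for all $(i,j) \in \m$ and $\sum_{(i,j) \in C} x_{ij} \leq 1$ for every maximal clique $C$ of $G_\m$. Scaling by $k$, a point $x$ lies in $k\qm$ if and only if $x_{ij} \geq 0$ and $\sum_{(i,j) \in C} x_{ij} \leq k$ for every maximal clique $C$; intersecting with $\mathbb{Z}^{\m}$ restricts to nonnegative integer fillings. This reduces the problem to translating the clique inequalities into the chain statistic $H_\m(x;1)$.

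Next I would use the definition of $G_\m$: two boxes are adjacent precisely when they are comparable inside some maximal rectangle $R \subseteq \m$. Hence a clique in $G_\m$ is a set of boxes that are pairwise comparable in a common rectangle, which is exactly a chain in some rectangle poset $R \subseteq \m$ --- that is, a weak northeast chain. Conversely every weak northeast chain is a clique. Since $H_\m(x;1)$ was defined as the maximum weight $\sum_{(i,j)\in C} x_{ij}$ over all such chains $C$, the condition that $\sum_{(i,j)\in C} x_{ij} \leq k$ holds for all maximal cliques $C$ is equivalent to the single inequality $H_\m(x;1) \leq k$. (It suffices to check maximal cliques because, by nonnegativity of $x$, the inequality for a clique implies it for any subclique, exactly as noted after the definition of $\qstab(G)$.) Combining these identifications shows that the integer points of $k\qm$ are precisely the nonnegative integer fillings with $H_\m(x;1) \leq k$, giving the count claimed.

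There is no real obstacle here; the lemma is a direct unwinding of definitions, and indeed most of the argument is already spelled out in the paragraph preceding the statement. The only point requiring any care is the correspondence between maximal cliques of $G_\m$ and chains in maximal rectangles of $\m$, which follows directly from the definition of the edge set of $G_\m$ in terms of containment of rectangles. Thus the proof amounts to assembling the definition of $\qm$, the clique-chain dictionary, and the definition of $H_\m(x;1)$.
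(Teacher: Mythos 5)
Your proof is correct and follows essentially the same route as the paper, which likewise unwinds the half-space description of $k\qstab(\mathcal M)$ and identifies cliques of $G_\m$ with weak northeast chains to reduce the clique inequalities to the single condition $H_\m(x;1)\leq k$. No gaps.
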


\subsection{Shifts in moon polyominoes}
Recall from Section~\ref{section:promotionBackground} that Rubey \cite{rubey} constructs a map between fillings of moon polyominoes $\m$ and $\n$ differing by a shift of down-diagonals intersecting a maximal rectangle $R$. Given a filling of $\m$, we obtain a new filling of $\n$ by applying $\Omega$ to the labels of $R$ and shifting certain boxes and labels parallel to the sides of $R$. Using the piecewise-linear versions of $\RSK$ and $\proid$, we can define a piecewise-linear version of $\Omega = \RSK^{-1} \circ \proid \circ \RSK$ as well as the extended map $\Omega_{\m \to \n}$. (If instead $\m$ and $\n$ differ by a shift of up-diagonals with respect to a maximal rectangle $R$, we can similarly define $\Omega_{\m \to \n}$ using $\RSK^{-1} \circ \idpro \circ \RSK$ on $R$.)

For a rectangle $S = [i_1,i_2] \times [j_1,j_2] \subseteq \m$ and $d \in \mathbb{Z}_{\geq 0}$, let
\[H_S(x;d) = H_{i_1,j_1}^{i_2,j_2}(x;d).\]
This map acts on $\qstab(\m)$ according to the following theorem.
\begin{Th} \label{maptheorem} Let $\m$ and $\n$ be moon polyominoes related by a shift of down-diagonals. Then $\Omega_{\m \to \n}$ is a piecewise-linear, volume-preserving, continuous map $\mathbb{R}^\m \to \mathbb{R}^\n$ such that
\begin{enumerate}[(a)]
    \item for all $d \in \mathbb{Z}_{\geq 0}$ and for every maximal rectangle $S$ of $\m$ with corresponding maximal rectangle $S'$ in $\n$, $H_S(x;d) = H_{S'}(\Omega_{\m \to \n}(x);d)$;
    \item for all $k \in \mathbb{Z}_{\geq 0}$, $\Omega_{\m \to \n}(k\qm) = k\qn$; and
    \item for all $k \in \mathbb{Z}_{\geq 0}$, the restriction $\Omega_{\m \to \n}\colon k\qm \cap \mathbb{Z}^\m \to k\qn \cap \mathbb{Z}^\n$ is a bijection.
\end{enumerate}
In particular,
\[\Ehr_{\qm}(t) = \Ehr_{\qn}(t).\]
\end{Th}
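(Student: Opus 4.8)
The plan is to prove parts (a), (b), and (c) and then read off the Ehrhart equality as a formal consequence of (c). I would first dispose of the analytic claims about $\Omega_{\m \to \n}$: the map $\Omega = \RSK^{-1} \circ \proid \circ \RSK$ is a composition of piecewise-linear toggles together with $\phi$ and $\phi^{-1}$, each of which is piecewise-linear, volume-preserving, and continuous, while the reindexing that shifts the labels of $\n \setminus R$ is a coordinate permutation with these same properties; hence so is $\Omega_{\m \to \n}$. At this stage I would also record two bookkeeping facts for later: every map in sight sends integer vectors to integer vectors (the toggle $x_p \mapsto \min + \max - x_p$ and the transfer maps all preserve integrality) and sends the nonnegative orthant into itself, and $\Omega_{\m \to \n}$ is invertible with an inverse of the same type.

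The heart of the argument is part (a), and I would organize it by classifying the maximal rectangles $S$ of $\m$ by their position relative to $R$ and the shifted down-diagonals $\mathcal{D}$. Because $R$ is itself a maximal rectangle, every other $S$ is either \emph{tall and narrow} (its down-diagonals lie among those of $R$, so that the part of $S$ outside $R$ sits inside $\mathcal{D} \setminus R$ and is genuinely shifted by one down-diagonal) or \emph{short and wide} (its up-diagonals lie among those of $R$, so that $S$ meets none of $\mathcal{D} \setminus R$ and $S' = S$ as a set of cells, only the labels inside $R$ being altered), with the remaining rectangles either equal to $R$ or disjoint from the modified region. For $S = R$ the identity $H_R(x;d) = H_R(\Omega(x);d)$ is exactly Corollary~\ref{Cor:shiftsinonedirection}(a) with $u=1$, $v=r$. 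For the short-and-wide rectangles I would use that $\Omega$ preserves every statistic $H_{u,1}^{v,s}$ spanning the full column range of $R$ (Corollary~\ref{Cor:shiftsinonedirection}(a)), together with the fact that such statistics control the relevant $P$-tableau data (in the spirit of Lemma~\ref{Lemma:samePTableaux}), to conclude that the wider statistic $H_S$ is also preserved. For the tall-and-narrow rectangles I would invoke Corollary~\ref{Cor:shiftsinonedirection}(b), whose built-in shift by one down-diagonal is precisely the geometric shift carrying $S$ to $S'$.

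The hard part will be the tall-and-narrow case, where a maximum-weight system of nonintersecting chains in $S$ may cross the boundary between $R$ and $\mathcal{D} \setminus R$, so that one cannot simply quote a statistic internal to $R$. My strategy here mirrors the induction in the proof of Lemma~\ref{Lemma:doubleRSK}: peel off one down-diagonal at a time and slide the endpoints of the relevant chains using Greene's theorem (Theorem~\ref{Th:plGreene}) and the order-ideal chain-shifting lemma (Lemma~\ref{Lemma:orderidealshifting}), matching the contribution inside $R$ (transformed by $\Omega$) against the contribution in $\mathcal{D} \setminus R$ (moved by the rigid shift) without ever appealing to explicit toggle formulas. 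This is the step where the combinatorial gluing must be made precise.

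Granting part (a), parts (b) and (c) and the conclusion are formal. A point $x$ lies in $k\qm$ exactly when $x \geq 0$ and every weak northeast chain has weight at most $k$, i.e. $H_S(x;1) \leq k$ for every maximal rectangle $S$; by part (a) and the bijection $S \mapsto S'$ between the maximal rectangles of $\m$ and $\n$, and since $\Omega_{\m \to \n}$ preserves nonnegativity and is invertible, this holds iff $\Omega_{\m \to \n}(x) \in k\qn$, giving $\Omega_{\m \to \n}(k\qm) = k\qn$, which is part (b). Because $\Omega_{\m \to \n}$ and its inverse preserve integrality, its restriction to lattice points of each dilate is a bijection $k\qm \cap \mathbb{Z}^\m \to k\qn \cap \mathbb{Z}^\n$, which is part (c). Counting lattice points then yields $i_{\qm}(k) = i_{\qn}(k)$ for every $k \in \mathbb{Z}_{\geq 0}$, and equal Ehrhart functions force equal Ehrhart series, so $\Ehr_{\qm}(t) = \Ehr_{\qn}(t)$.
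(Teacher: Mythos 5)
Your framing of the problem is right, and the analytic preliminaries, parts (b) and (c), and the Ehrhart conclusion are handled essentially as in the paper. The gap is in the heart of part (a), and you have in fact flagged it yourself: for a maximal rectangle $S \neq R$ that overlaps $R$ and extends beyond it, a maximum-weight system of chains in $S$ passes through both $S \cap R$ (where the labels are transformed by $\Omega$) and $S \setminus R$ (where the labels merely shift with their boxes), and nothing you cite controls such ``straddling'' statistics. Corollary~\ref{Cor:shiftsinonedirection} only governs statistics $H_{u,1}^{v,s}$ and $H_{1,u}^{r,v}$ internal to $R$, and the induction of Lemma~\ref{Lemma:doubleRSK} that you propose to mimic likewise never leaves $R$; ``peeling off one down-diagonal at a time'' does not obviously terminate in a statement about chains that exit $R$. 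Note also that the same difficulty already afflicts your ``short and wide'' rectangles (their chains also use cells of $S \setminus R$), so the supposedly easy half of your case analysis has the identical hole, and Lemma~\ref{Lemma:samePTableaux} alone cannot repair it because that lemma only converts equality of $P$-tableaux into equality of chain statistics \emph{within a single rectangle}.

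The missing idea is Lemma~\ref{Lemma:placticMonoid} (well-definedness of the plactic monoid). Writing the labels of $S$ as $x_1 \concat x_2 \concat x_3$ with $x_2$ the labels of $S \cap R$, the paper first shows via Corollary~\ref{Cor:shiftsinonedirection}(a) and Lemma~\ref{Lemma:samePTableaux} that $x_2$ and its image $\widetilde{x}_2$ under $\Omega$ have the same $P$-tableau; Lemma~\ref{Lemma:placticMonoid} then says that the $P$-tableau of a concatenation depends only on the $P$-tableaux of the pieces, so $x_1 \concat x_2 \concat x_3$ and $x_1 \concat \widetilde{x}_2 \concat x_3$ have the same $P$-tableau; and Lemma~\ref{Lemma:samePTableaux} converts this back into equality of all statistics $H_S(\,\cdot\,;d) = H_{S'}(\,\cdot\,;d)$, with the symmetric argument via Corollary~\ref{Cor:preservedinonedirection}(b) and $Q$-tableaux handling the rectangles that do not shift. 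Without a gluing statement of this strength your argument does not close, so you would need either to prove Lemma~\ref{Lemma:placticMonoid} (which in this paper requires the evacuation machinery deferred to Section~\ref{section:commutationProperties}) or to supply an equivalent replacement for it.
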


\begin{Ex}
Consider the labeling in Figure~\ref{fig:mapExample} and apply $\Omega_{\m \to \n}$, where $\m$ is the moon polyomino on the left and $\n$ on the right.

\begin{figure}
\centering
\begin{tikzpicture}

\begin{scope}[shift={(0,0)}]
\draw (0,0)--(1,1);
\draw (-2,0)--(1,3);

\draw (0,0)--(-1,1);
\draw (2,0)--(-1,3);

\node[wB] at (2,0) {};

\node[wB] at (0,0) {};
\node[wB] at (1,1) {};

\node[wB] at (-2,0) {};
\node[wB] at (-1,1) {};
\node[wB] at (0,2) {};
\node[wB] at (1,3) {};

\node[wB] at (-1,3) {};

\node at (2.35,0) {$c$};

\node at (0.4,0.05) {$b$};
\node at (1.35,1) {$e$};

\node at (-1.65,0) {$a$};
\node at (-0.6,1.05) {$d$};
\node at (0.4,2) {$f$};
\node at (1.35,3.1) {$h$};

\node at (-0.6,3) {$g$};

\draw [ultra thick] (0,0)--(1,1)--(0,2)--(-1,1)--cycle;
\end{scope}

\draw [ultra thick, -stealth] (3,1)--(6,1);
\node at (4.5,1.4) {$\Omega_{\m \to \n}$};

\begin{scope}[shift={(9,0)}]
\draw (0,0)--(1,1);
\draw (-2,0)--(1,3);

\draw (1,1)--(0,2);
\draw (1,-1)--(-2,2);

\node[wB] at (1,-1) {};

\node[wB] at (0,0) {};
\node[wB] at (1,1) {};

\node[wB] at (-2,0) {};
\node[wB] at (-1,1) {};
\node[wB] at (0,2) {};
\node[wB] at (1,3) {};

\node[wB] at (-2,2) {};

\node at (1.35,-1) {$C$};

\node at (0.4,0) {$B$};
\node at (1.35,1) {$E$};

\node at (-1.65,0) {$A$};
\node at (-0.6,1) {$D$};
\node at (0.4,2) {$F$};
\node at (1.35,3) {$H$};

\node at (-1.6,2) {$G$};
\end{scope}

\end{tikzpicture}
\caption{The labeling of moon polyominoes in Example~\ref{Ex:mapExample}, related by an application of $\Omega_{\m \to \n}$.}
\label{fig:mapExample}
\end{figure}
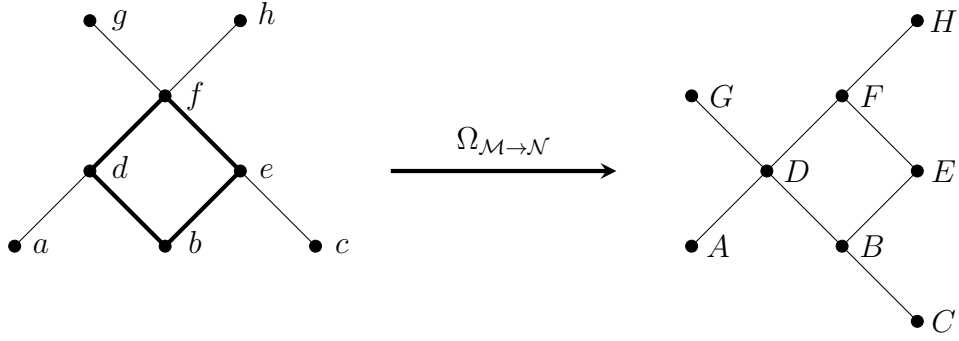
We have
\begin{eqnarray*}
    A&=&a \\
    B&=& \min(b,f)+\max(d,e) - d \\
    C&=&c \\
    D&=& \min(d,e)+\max(b,f) - b \\
    E&=&\min(d,e)+\max(b,f) - f\\
    F&=& \min(b,f)+\max(d,e) - e \\
    G&=&g \\
    H&=&h.
\end{eqnarray*}
Note that since $\max(d,e) \geq d$, $B \geq 0$. By similar arguments, all other coordinates are nonnegative. All coordinates are integers if the original labeling is a lattice point. Consider the following sum:
\[B+D = \min(b,f)+\max(d,e) - d +\min(d,e)+\max(b,f) - b = e+f.\]
So $\Omega$ algebraically shifts the weight $e+f$ downward in the bolded rectangle. Adding in the shifted coordinates $C$ and $G$, we obtain
\[B+C+D+G = c+e+f+g.\]
Intuitively, shifting the weights $C$ and $G$ causes the chain statistics to ``line up'' after applying $\Omega_{\m \to \n}$ so that the weight of this chain is preserved. In general, the maximum weight of all chains in each maximal rectangle is preserved, and so the maximum weights among all northeast chains in both labelings are the same.
\label{Ex:mapExample}
\end{Ex}

Let $x_1$ be a labeling of $[i] \times [s]$ and let $x_2$ be a labeling of $[j] \times [s]$. In many of the remaining results in this paper, we will stack the labelings $x_1$ and $x_2$ to form a labeling $x_1 \concat x_2$  as in the following picture:

\begin{center}
\begin{tikzpicture}[scale = 0.6]

\begin{scope}[rotate = 45]
\draw[step=1.0,black,thin] (0,0) grid (1,2);
\node at (0.5,0.5) {$x_1$};
\node at (0.5,1.5) {$x_2$};
\end{scope}

\end{tikzpicture}
\end{center}
In other words, we will shift the labeling $x_2$ to be a labeling of $[i+1,i+j] \times [s]$ and concatenate it with $x_1$ to give a new labeling $x_1 \concat x_2$ of $[i+j] \times [s]$. To prove Theorem~\ref{maptheorem} we need the following lemma.

\begin{Lemma}
\label{Lemma:placticMonoid}
If $x_1$ and $\widetilde{x}_1$ are labelings of a rectangle with the same $P$-tableaux, and similarly for $x_2$ and $\widetilde{x}_2$, then $x_1 \concat x_2$ and $\widetilde{x}_1 \concat \widetilde{x}_2$ have the same $P$-tableaux.

\end{Lemma}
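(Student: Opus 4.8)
The plan is to verify the hypothesis of Lemma~\ref{Lemma:samePTableaux} for the two concatenations. Writing $R=[i+j]\times[s]$ for the ambient rectangle, it suffices to show that $H_{1,u}^{i+j,v}(x_1\concat x_2;k)=H_{1,u}^{i+j,v}(\widetilde x_1\concat\widetilde x_2;k)$ for all $1\le u\le v\le s$ and all $k$. The idea is to compute each such statistic purely from statistics of the two blocks in isolation, since those agree by hypothesis (again via Lemma~\ref{Lemma:samePTableaux} applied to $x_1,\widetilde x_1$ and to $x_2,\widetilde x_2$).

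First I would cut each path system along the interface between the rows occupied by $x_1$ (rows $1$ through $i$) and those occupied by $x_2$ (rows $i+1$ through $i+j$). Because the paths in a system counted by $H_{1,u}^{i+j,v}(\,\cdot\,;k)$ are monotone and vertex-disjoint, they cross this interface in $k$ distinct columns $c_1<\cdots<c_k$, and the total weight splits as the weight of the top half plus the weight of the bottom half. Optimizing the two halves independently for a fixed crossing vector $\vec c$ and then over $\vec c$ gives
\[H_{1,u}^{i+j,v}(x_1\concat x_2;k)=\max_{c_1<\cdots<c_k}\Bigl(T_{x_1}(\vec c)+B_{x_2}(\vec c)\Bigr),\]
where $T_{x_1}(\vec c)$ is the maximum weight of $k$ nonintersecting paths in $[i]\times[s]$ from $(1,u),\dots,(1,u+k-1)$ to $(i,c_1),\dots,(i,c_k)$, and $B_{x_2}(\vec c)$ is the analogous maximum in the rectangle occupied by $x_2$ from the crossing columns down to $(i+j,v-k+1),\dots,(i+j,v)$. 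The two halves glue into a single legal system precisely because they meet along the common columns $\vec c$, so this composition in the $(\max,+)$ sense is an equality.

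The crux is to show the right-hand side depends only on $P(x_1)$ and $P(x_2)$. When $\vec c$ is a block of consecutive columns $w-k+1,\dots,w$, the summands are exactly $H_{1,u}^{i,w}(x_1;k)$ and $H_{1,w-k+1}^{j,v}(x_2;k)$, which are controlled by $P(x_1)$ and $P(x_2)$ via Lemma~\ref{Lemma:samePTableaux}; gluing optimal consecutive-crossing halves gives $H_{1,u}^{i+j,v}(x_1\concat x_2;k)\ge\max_w\bigl(H_{1,u}^{i,w}(x_1;k)+H_{1,w-k+1}^{j,v}(x_2;k)\bigr)$ immediately. The difficulty is the reverse inequality, since an optimal system may cross the interface in a non-consecutive set of columns, and for non-consecutive sinks $\vec c$ the quantity $T_{x_1}(\vec c)$ is not literally among the statistics appearing in Lemma~\ref{Lemma:samePTableaux}.

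I expect these non-consecutive crossings to be the main obstacle, and I would resolve them by establishing a generalized Greene statement: for sinks at arbitrary columns on the bottom boundary, $T_{x_1}(\vec c)$ is determined by the consecutive-window statistics $H_{1,u'}^{i,v'}(x_1;k')$, hence by $P(x_1)$, and symmetrically for $B_{x_2}$. One route is to show directly that the maximum in the display is attained at a consecutive crossing block; failing that, I would induct on the number of paths $k$ and on the total gap $\sum_\ell(c_{\ell+1}-c_\ell-1)$, using a tropical Lindstr\"om-Gessel-Viennot (Pl\"ucker) relation among nonintersecting path maxima to rewrite a spread family in terms of families of strictly smaller spread. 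Such relations, together with the bookkeeping needed to keep all intermediate quantities anchored to the bottom boundary, should follow from the chain shifting lemma (Lemma~\ref{Lemma:orderidealshifting}) and toggle commutation (Observation~\ref{Obs:togglesCommute}) in the style of Lemma~\ref{Lemma:doubleRSK}. Granting this, both $T_{x_1}(\vec c)$ and $B_{x_2}(\vec c)$ depend only on the respective $P$-tableaux, so the displayed maximum is unchanged upon replacing $x_1,x_2$ with $\widetilde x_1,\widetilde x_2$, and Lemma~\ref{Lemma:samePTableaux} then yields $P(x_1\concat x_2)=P(\widetilde x_1\concat\widetilde x_2)$.
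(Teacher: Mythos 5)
Your setup is sound: reducing to Lemma~\ref{Lemma:samePTableaux}, cutting each path system at the interface between the two blocks, and observing that the weight splits over the crossing columns $c_1<\cdots<c_k$ are all correct, and you have correctly identified where the difficulty lives. The problem is that the difficulty you identify is not resolved; it is essentially the whole lemma. The claim that $T_{x_1}(\vec c)$ for a \emph{non-consecutive} sink vector $\vec c$ is determined by $P(x_1)$ is (up to a routine limiting argument) equivalent to the statement being proved: if you concatenate $x_1$ with a single row carrying very large entries exactly in the columns $\vec c$, then the statistic $H_{1,u}^{i+1,s}(\,\cdot\,;k)$ of the concatenation recovers $T_{x_1}(\vec c)$, so ``$T_{x_1}(\vec c)$ depends only on $P(x_1)$'' and ``$P(x_1\concat x_2)$ depends only on $P(x_1)$ and $x_2$'' are two faces of the same assertion. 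Deferring it to an unproven ``generalized Greene statement'' therefore leaves the proof circular in spirit. Moreover, the two routes you sketch are both problematic: the maximum is \emph{not} in general attained at a consecutive crossing block (already for two $1\times s$ rows the disjointness constraints can force non-consecutive crossings), and the tropical Lindstr\"om--Gessel--Viennot/Pl\"ucker route is exactly the argument the paper flags as unavailable -- the determinantal identities use subtraction and do not tropicalize, which is why Lemma~\ref{Lemma:doubleRSK} itself required a different proof.

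For comparison, the paper sidesteps arbitrary-endpoint path statistics entirely. Proposition~\ref{Prop:RSKcoordinates} gives a transfer-matrix decomposition only for the entries of $\RSK_{i,s}(x_1\concat x_2)$ weakly left of $(i,s)$, where only \emph{single-path} maxima appear (so the decomposition into ``top half plus bottom half'' is unproblematic and manifestly a function of $P(x_1)$ and $x_2$). Then Proposition~\ref{Prop:RSKCancellation} shows that $\RSK\circ\RSK_{i,s}^{-1}$ is a composition of toggles lying strictly left of $(i,s)$, so each toggle only ever reads data already known to depend on $P(x_1)$ and $x_2$; this gives independence of $\overline{Q}(x_1)$. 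Independence of $\overline{Q}(x_2)$ is then obtained by duality, via the $180^\circ$ rotation and Lemma~\ref{Lemma:evacuationrotates}. If you want to salvage your approach, you would need to actually prove the generalized Greene statement for arbitrary sinks on the bottom boundary without subtraction, which is a substantial piece of work not contained in your sketch.
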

On the combinatorial level, this lemma states that the plactic monoid is well-defined as a monoid. Since the proof of Lemma~\ref{Lemma:placticMonoid} using toggles involves evacuation, we defer the proof until Section~\ref{sec:placticMonoid}.

\subsection{Proof of Ehrhart equivalence}

We are now ready to relate the stable set polytopes of $\m$ and $\n$.

\begin{proof}[Proof of Theorem~\ref{maptheorem}]
Let $\mathcal{M}$ and $\mathcal{N}$ differ by a single shift of their down-diagonals with respect to a maximal rectangle $R$. When $\Omega_{\m \to \n}$ acts on $R$, we can express it as a composition of toggles and the transfer map on $R$. Since toggles and the transfer map are piecewise-linear, volume-preserving, and continuous, their composition is as well. The transfer map and toggles also map lattice points to lattice points, so it suffices to prove (a) and (b).

For (a), suppose that we have two maximal rectangles $R$ and $S$ in $\m$ as in Figure~\ref{fig:mapTheoremProofFigure}. We apply $\Omega_{\m \to \n}$ to $R$. Let $S'$ be the image of $S$ in $\n$ under this shifting. Let $x_1,x_2,x_3,$ and $\widetilde{x}_2$ be labelings as in Figure~\ref{fig:mapTheoremProofFigure} (note that the $x_1$ and $x_3$ labels just shift).

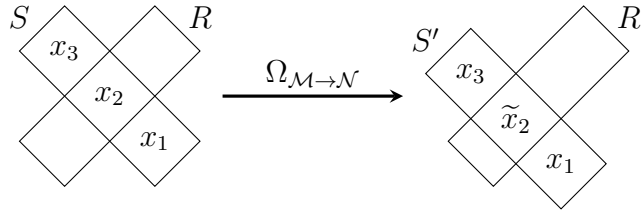
\begin{figure}
\centering
    \begin{tikzpicture}[scale=0.3]
    \node at (-4,9.5) {$S$};
    \node at (4,9.5) {$R$};
    \node at (-2,8) {$x_3$};
    \node at (0,6) {$x_2$};
    \node at (2,4) {$x_1$};
    \draw (2,2)--(4,4)--(-2,10)--(-4,8)--cycle;
    \draw (-2,2)--(-4,4)--(2,10)--(4,8)--cycle;
    
    \node at (9,7) {$\Omega_{\m \to \n}$};
    \draw [-stealth, very thick] (5,6)--(13,6);
    
    \begin{scope}[shift={(19,0)}]
        \node at (-5,8.5) {$S'$};
        \node at (4,9.5) {$R$};
        \node at (-3,7) {$x_3$};
        \node at (-1,5) {$\widetilde{x}_2$};
        \node at (1,3) {$x_1$};
        \draw (1,1)--(3,3)--(-3,9)--(-5,7)--cycle;
        \draw (-2,2)--(-4,4)--(2,10)--(4,8)--cycle;
    \end{scope}
    \end{tikzpicture}
\caption{The action of $\Omega_{\m \to \n}$ on a moon polyomino. Outside of $R$, the labels in the rectangle $S$ shift, while $\Omega$ is applied inside of $R$.}
\label{fig:mapTheoremProofFigure}
\end{figure}

By Corollary~\ref{Cor:shiftsinonedirection}(a) and Lemma~\ref{Lemma:samePTableaux}, $x_2$ and $\widetilde{x}_2$ have the same $P$-tableaux. By Lemma~\ref{Lemma:placticMonoid}, the labelings $x_1 \concat x_2 \concat x_3$ and $x_1\concat \widetilde{x}_2 \concat x_3$ in Figure~\ref{fig:mapTheoremProofFigure} have the same $P$-tableaux. In particular, by Lemma~\ref{Lemma:samePTableaux}
\[H_{S'}(\Omega_{\m \to \n}(x);d) = H_S(x;d).\]
A similar proof holds using Corollary~\ref{Cor:preservedinonedirection}(b) and $Q$-tableaux for rectangles that do not shift under $\Omega_{\m \to \n}$.

For (b), the transfer map and toggles preserve nonnegativity. We know that $\Omega_{\m \to \n}(x)$ satisfies the chain constraint inequalities because this is the special case of (a) when $d = 1$.
\end{proof}

We can relate any equivalent moon polyominoes $\m$ and $\n$ via a series of shifts of up- and down-diagonals (see Figure~\ref{fig:shapeStraightening} for an example). This allows us to define a map $\Omega_{\m \to \n} \colon \mathbb R^{\m} \to \mathbb R^{\n}$ by composing the respective maps for each of these shifts. It is then clear that Theorem~\ref{maptheorem} applies to $\Omega_{\m \to \n}$ as well. (We will prove in Section~\ref{section:commutationProperties} that $\Omega_{\m \to \n}$ is independent of the choice of shifts from $\m$ to $\n$.) 

As mentioned previously, usually the Ehrhart function of a rational polytope is not a polynomial but only a quasi-polynomial. However, one can use the map in Theorem~\ref{maptheorem} to show that the Ehrhart function is a polynomial, that is, it experiences \emph{period collapse} when $\qstab(\m)$ is not a lattice polytope.

\begin{Cor}
The Ehrhart function of $\qstab(\m)$ is a polynomial for any moon polyomino $\m$. In other words, the number of fillings of $\m$ with maximum weight northeast chain of weight at most $k$ is polynomial in $k$. Moreover, this polynomial depends only on $\m$ up to equivalence.
\end{Cor}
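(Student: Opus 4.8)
The plan is to reduce the problem to a single convenient representative of the equivalence class of $\m$, where the polytope $\qstab(\m)$ can be recognized directly as a chain polytope and is therefore automatically a lattice polytope with a genuine Ehrhart polynomial. Theorem~\ref{maptheorem} does all the real work of transporting the Ehrhart function across the equivalence class; the remaining task is to exhibit one representative whose polytope is integral.

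First I would use the fact that every moon polyomino is equivalent to a partition diagram: since $\m$ is convex and intersection-free, its up-diagonals (and down-diagonals) are totally ordered by inclusion of their index sets, so permuting them into sorted order produces a Ferrers shape $\n$. By the discussion following Theorem~\ref{maptheorem}, this equivalence is realized by a sequence of shifts of up- and down-diagonals, so Theorem~\ref{maptheorem} gives $\Ehr_{\qm}(t)=\Ehr_{\qn}(t)$. Because the coefficients of the Ehrhart series are exactly the values of the Ehrhart function, $\m$ and $\n$ have identical Ehrhart functions. This single step both reduces the polynomiality question to the shape $\n$ and shows that the resulting function is an invariant of the equivalence class, settling the ``moreover'' clause.

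Next I would analyze $G_\n$ for the Ferrers representative. The key geometric observation is that in a partition diagram, any two boxes $(i_1,j_1)$ and $(i_2,j_2)$ with $i_1\le i_2$ and $j_1\le j_2$ already lie in a common rectangle $[i_1,i_2]\times[j_1,j_2]\subseteq\n$; this follows solely from the fact that a Ferrers shape is closed toward its corner (weakly decreasing row lengths force every box of the spanning rectangle to be present). Consequently the boxes of $\n$ form a poset $P_{\n}$ under the product order, and the cliques of $G_\n$ are precisely the chains of $P_{\n}$. Comparing the defining inequalities, $\qstab(\n)$ coincides with the chain polytope $\mathcal{C}(P_{\n})$. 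Since Stanley's description shows that the vertices of a chain polytope are the $0/1$ indicator vectors of antichains, $\qstab(\n)$ is a lattice polytope, so its Ehrhart function $i_{\qstab(\n)}(k)$ is a genuine polynomial.

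Combining these steps completes the argument: the Ehrhart function of $\qstab(\m)$ equals that of the lattice polytope $\qstab(\n)$ and is therefore a polynomial depending only on the equivalence class of $\m$. The reformulation in terms of fillings is then immediate from the earlier lemma identifying the lattice points of $k\,\qstab(\m)$ with the nonnegative integer fillings $x$ satisfying $H_\m(x;1)\le k$. The only point demanding care is the recognition that the Ferrers representative yields a chain polytope --- that is, verifying the nesting of rectangles so that cliques of $G_\n$ are exactly chains of $P_{\n}$ --- but this is a routine consequence of the definition of a partition diagram and presents no genuine obstacle.
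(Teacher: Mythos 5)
Your proposal is correct and follows essentially the same route as the paper: reduce via Theorem~\ref{maptheorem} to a straight partition shape, identify $\qstab$ of that shape with the chain polytope of the poset of its boxes, and conclude polynomiality since chain polytopes are lattice polytopes. The only difference is that you spell out the verification that cliques of $G_{\n}$ for a Ferrers shape are exactly chains of $P_{\n}$ (i.e., that the spanning rectangle of two comparable boxes stays inside the shape), which the paper leaves implicit.
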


\begin{proof}
By Theorem~\ref{maptheorem}, we need only consider moon polyominoes up to equivalence. Any moon polyomino $\m$ is equivalent to some straight partition shape $\lambda$. Consider the poset $P_\lambda$ of boxes of $\lambda$ ordered by $(i,j) \leq (i',j')$ if $i \leq i'$ and $j \leq j'$. The clique constraint inequalities of $\qstab(\lambda)$ are exactly the chain constraint inequalities of the chain polytope of $P_\m$, and so these polytopes are the same. But chain polytopes are lattice polytopes, and so the Ehrhart function is a polynomial.
\end{proof}

\begin{figure}
    \centering
    \begin{tikzpicture}[scale = 0.3]
    \begin{scope}[shift={(0,0)}]
    \draw (1,-1)--(2,0);
    \draw (0,0)--(3,3);
    \draw (-2,0)--(3,5);
    \draw (-3,1)--(2,6);
    \draw (-4,2)--(1,7);
    \draw (-4,4)--(-2,6);
    
    \draw (-2,0)--(-4,2);
    \draw (1,-1)--(-4,4);
    \draw (2,0)--(-3,5);
    \draw (2,2)--(-2,6);
    \draw (3,3)--(0,6);
    \draw (3,5)--(1,7);
    
    \draw [ultra thick](-2,0)--(3,5)--(1,7)--(-4,2)--cycle;
    \end{scope}
    
    \draw [ultra thick, -stealth] (4,3)--(6,3);
    
    \begin{scope}[shift={(12,0)}]
    \draw (0,-2)--(1,-1);
    \draw (-1,-1)--(2,2);
    \draw (-2,0)--(3,5);
    \draw (-3,1)--(2,6);
    \draw (-4,2)--(1,7);
    \draw (-5,3)--(-3,5);
    
    
    \draw (0,-2)--(-5,3);
    \draw (1,-1)--(-4,4);
    \draw (1,1)--(-3,5);
    \draw (2,2)--(-1,5);
    \draw (2,4)--(0,6);
    \draw (3,5)--(1,7);
    
    \draw [ultra thick](-1,-1)--(2,2)--(-1,5)--(-4,2)--cycle;
    \end{scope}
    
    \draw [ultra thick, -stealth] (16,3)--(18,3);
    
    \begin{scope}[shift={(24,0)}]
    \draw (0,-2)--(1,-1);
    \draw (-1,-1)--(4,4);
    \draw (-2,0)--(3,5);
    \draw (-3,1)--(2,6);
    \draw (-4,2)--(-1,5);
    \draw (-5,3)--(-3,5);
    
    
    \draw (0,-2)--(-5,3);
    \draw (1,-1)--(-4,4);
    \draw (1,1)--(-3,5);
    \draw (2,2)--(-1,5);
    \draw (3,3)--(1,5);
    \draw (4,4)--(2,6);
    
    \draw [ultra thick](0,-2)--(1,-1)--(-4,4)--(-5,3)--cycle;
    \end{scope}
    
    \draw [ultra thick, -stealth] (29,3)--(31,3);
    
    \begin{scope}[shift={(37,0)}]
    \draw (0,-2)--(5,3);
    \draw (-1,-1)--(4,4);
    \draw (-2,0)--(3,5);
    \draw (-3,1)--(0,4);
    \draw (-4,2)--(-2,4);
    \draw (-5,3)--(-4,4);
    
    
    \draw (0,-2)--(-5,3);
    \draw (1,-1)--(-4,4);
    \draw (2,0)--(-2,4);
    \draw (3,1)--(0,4);
    \draw (4,2)--(2,4);
    \draw (5,3)--(3,5);
    \end{scope}
    \end{tikzpicture}
    \caption{Equivalence of a moon polyomino to a straight partition shape using diagonal shifts. Shift either the down-diagonals (in the first moon polyomino) or the up-diagonals (in the second and third) intersecting the bolded rectangle to obtain the next shape.}
    \label{fig:shapeStraightening}
\end{figure}
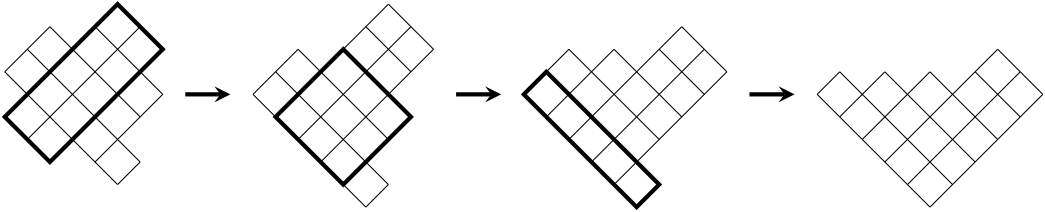

The Ehrhart polynomial of the chain or order polytope of a poset $P$ (also known as the \emph{order polynomial} of $P$) can be calculated using the methods of Stanley \cite{stanley3}. In particular, the normalized volume of this polytope is equal to the number of linear extensions of $P$. In our case, this gives the following immediate corollary.

\begin{Cor}
Let $\m$ be a moon polyomino equivalent to a straight partition shape $\lambda$. Then the normalized volume of $\qm$ is the number of standard Young tableaux of shape $\lambda$.
\end{Cor}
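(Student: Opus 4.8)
The plan is to chain together three facts already available in the excerpt, so that the argument is essentially bookkeeping with no substantive new input. First I would use Theorem~\ref{maptheorem}: since $\m$ is equivalent to $\lambda$, the polytopes $\qm$ and $\qstab(\lambda)$ have equal Ehrhart series. The normalized volume is the leading coefficient of the Ehrhart (quasi-)polynomial scaled by $d!$ and is therefore read off from the Ehrhart series; in particular it agrees for polytopes with equal Ehrhart series, even when one of them fails to be a lattice polytope. Thus the problem reduces to computing the normalized volume of $\qstab(\lambda)$ for the straight partition shape $\lambda$.

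Next I would invoke the identification established in the preceding corollary: for a straight partition shape $\lambda$, the clique-constraint inequalities defining $\qstab(\lambda)$ coincide exactly with the chain-constraint inequalities of the chain polytope $\mathcal{C}(P_\lambda)$, where $P_\lambda$ is the poset of boxes of $\lambda$ ordered by $(i,j)\leq(i',j')$ whenever $i\leq i'$ and $j\leq j'$. Hence $\qstab(\lambda)=\mathcal{C}(P_\lambda)$ as polytopes, so in particular they have the same normalized volume.

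Finally I would apply Stanley's computation of the order polynomial \cite{stanley3}: the normalized volume of the chain polytope $\mathcal{C}(P_\lambda)$ equals the number of linear extensions of $P_\lambda$. It then remains only to observe that a linear extension of $P_\lambda$ is precisely an order-preserving bijection from the boxes of $\lambda$ to $\{1,\dots,\lvert\lambda\rvert\}$, and that the order-preserving condition is exactly the requirement that entries increase along rows and down columns, i.e.\ the definition of a standard Young tableau of shape $\lambda$. This bijection delivers the desired count.

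The only step requiring any attention is this last identification of linear extensions of $P_\lambda$ with standard Young tableaux, and even there the content is completely standard; the genuinely substantive ingredient---Ehrhart invariance under moon-polyomino equivalence---has already been supplied by Theorem~\ref{maptheorem}, so I do not anticipate any real obstacle. As the surrounding text indicates, this is an immediate corollary, and the proof is correspondingly short.
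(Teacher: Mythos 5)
Your proposal is correct and follows the same route the paper takes: Ehrhart equivalence from Theorem~\ref{maptheorem}, the identification of $\qstab(\lambda)$ with the chain polytope of $P_\lambda$ from the preceding corollary, Stanley's linear-extension count for the normalized volume, and the standard bijection between linear extensions of $P_\lambda$ and standard Young tableaux of shape $\lambda$. The paper treats this as an immediate consequence and your write-up supplies exactly the intended bookkeeping.
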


Theorem~\ref{maptheorem} tells us that the maximum weight among all weak northeast chains in $\mathcal{M}$ is preserved by $\Omega_{\m \to \n}$. We close this section by proving the analogous result on strict southeast chains.

\begin{Cor}
Let $\mathcal{M}$ and $\mathcal{N}$ be equivalent moon polyominoes. Then for any $x \in \mathbb{R}_{\geq 0}^\mathcal{M}$, $\Omega_{\m \to \n}$ preserves the maximum length of all strict southeast chains.
\end{Cor}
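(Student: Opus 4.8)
The plan is to reduce the statement to a single-rectangle fact about how the length of the longest strict southeast chain is encoded in the piecewise-linear Greene invariants, and then invoke the weight preservation already established in Theorem~\ref{maptheorem}(a).

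First, since any antichain of nonzero entries lies in some maximal rectangle, and since comparability of two boxes is the same in every rectangle containing them, the maximum southeast-chain length factors as $L_\m(x) = \max_S L_S(x)$, where $S$ ranges over the maximal rectangles of $\m$ and $L_S(x)$ denotes the longest strict southeast chain inside $S$. An equivalence of moon polyominoes carries maximal rectangles of $\m$ bijectively to maximal rectangles of $\n$, and by Theorem~\ref{maptheorem}(a) (applied to each elementary shift and composed) we have $H_S(x;d) = H_{S'}(\Omega_{\m \to \n}(x);d)$ for every such pair $S \leftrightarrow S'$ and all $d$. Thus it suffices to prove the following single-rectangle lemma: for $R = [r] \times [s]$ and $x \in \mathbb{R}_{\ge 0}^R$, the longest strict southeast chain of $x$ equals the number of positive parts of the RSK shape,
\[L_R(x) = \#\{\, d \ge 1 : H_{1,1}^{r,s}(x;d) > H_{1,1}^{r,s}(x;d-1) \,\}.\]
Granting this and applying it inside each maximal rectangle, the data $H_S(\cdot;d)$ are preserved by $\Omega_{\m \to \n}$, so each $L_S$ is preserved, and taking the maximum over $S$ gives $L_\m(x) = L_\n(\Omega_{\m \to \n}(x))$.

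To prove the lemma I would show that the right-hand count depends only on the support $T = \{(i,j) : x_{ij} > 0\}$ and equals the length of the longest antichain of $T$. By the piecewise-linear Greene's theorem (Theorem~\ref{Th:plGreene}), $H_{1,1}^{r,s}(x;d) = \lambda_1 + \cdots + \lambda_d$, where $\lambda_1 \ge \lambda_2 \ge \cdots \ge 0$ is the RSK shape of $x$, so the count is exactly the number of nonzero parts $\max\{d : \lambda_d > 0\}$. This is the real (piecewise-linear) analogue of the antichain half of the Greene--Kleitman duality, whose classical integer version identifies the number of nonzero parts of the shape with the longest antichain of $T$. To transfer it to real fillings, I would argue that the positivity pattern $\{d : \lambda_d(x) > 0\}$ is constant on each support stratum $\mathbb{R}_{>0}^T$: by positive homogeneity it agrees with its value at the $0/1$ indicator $\chi_T$, where the classical theorem applies and yields the longest antichain of $T$, which is precisely $L_R(x)$.

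The main obstacle is exactly this last transfer step. Monotonicity and homogeneity of $H_{1,1}^{r,s}(\cdot;d)$ give only crude two-sided bounds that are too weak to detect when a part $\lambda_d$ vanishes, so constancy of the positivity pattern across a support stratum is not formal. I expect to handle it with an augmenting-path argument mirroring the integer Greene--Kleitman proof but carried out with real weights. On one side, a strict southeast chain (antichain) of size $\ell$ in $T$ lets one route $\ell$ vertex-disjoint monotone paths through $\ell$ distinct positive boxes, forcing $H_{1,1}^{r,s}(x;\ell) > H_{1,1}^{r,s}(x;\ell-1)$. On the other, if $T$ has no antichain of size $\ell$, then Dilworth's theorem covers $T$ by $\ell-1$ chains, and one shows no system of $\ell$ vertex-disjoint paths can out-weigh the best $\ell-1$ of them, so $\lambda_\ell = 0$. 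Both halves are combinatorial conditions on $T$ alone, which is what makes the positivity pattern support-determined and completes the lemma.
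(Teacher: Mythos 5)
Your argument is correct and, at its core, runs along the same lines as the paper's: both use Dilworth's theorem to read off the longest strict southeast chain from the nondecreasing sequence $k \mapsto H(\,\cdot\,;k)$, which Theorem~\ref{maptheorem}(a) shows is preserved. The genuine difference is that the paper argues globally with $H_\m(x;k)$, asserting that this sequence first stabilizes exactly at $k=L_\m(x)$, whereas you localize to each maximal rectangle $S$, characterize $L_S(x)$ as the number of indices $d$ with $H_S(x;d)>H_S(x;d-1)$ (equivalently, the number of positive parts of the RSK shape of $S$), and then take the maximum over $S$. Your localization is in fact the more robust statement: since $H_\m(x;k)=\max_S H_S(x;k)$, the global sequence can become constant strictly before $\max_S L_S(x)$ when one rectangle's total weight dominates the others (for the shape $(5,2)$ with weight $20$ on each box of the long row and weight $1$ at $(2,1)$, one has $H_\m(x;k)=100$ for all $k\ge 1$ yet $L_\m(x)=2$), while the per-rectangle sequence really is strictly increasing up to $L_S(x)$ and constant thereafter. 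So the rectangle-by-rectangle route you take is the right one, and it quietly repairs an imprecision in the paper's own wording.

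Two points in your sketch need tightening. First, both halves of your single-rectangle lemma should be run through the standard identification of $H_{1,1}^{r,s}(x;k)$ with the maximum weight of a union of $k$ disjoint weak northeast chains (the paper already uses this implicitly when it equates $H_\m$ with the $H_S$'s); after that, the ``no antichain of size $\ell$'' direction is immediate from Dilworth, since $\ell-1$ chains cover the support and so $H(x;\ell-1)$ already equals the total weight. Second, in the forward direction, merely routing $\ell$ disjoint paths through the $\ell$ positive antichain boxes shows $H(x;\ell)>0$ but not $H(x;\ell)>H(x;\ell-1)$: the correct augmentation is to take an optimal union of $\ell-1$ disjoint chains, note by pigeonhole that it omits at least one of the $\ell$ antichain elements, and observe that adjoining that element produces a set with largest antichain at most $\ell$, hence coverable by $\ell$ disjoint chains of strictly larger weight. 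With these fixes the positivity pattern of the parts is determined by the support alone, and your proposed detour through the $0/1$ indicator becomes unnecessary.
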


\begin{proof}
It suffices to consider the case when $\m$ and $\n$ differ by a single shift of down-diagonals. 
Consider the nondecreasing sequence
\[H_\m(x;1) \leq H_\m(x;2) \leq \dots.\]
This sequence stabilizes at some $H_\m(x;k)$, where $k$ is the minimum number such that for all maximal rectangles $R$ of $\m$, the entries of $R$ such that $x_{ij} \neq 0$ can be partitioned into $k$ weak northeast chains. Recall that Dilworth's theorem states that the size of the largest antichain of a poset is the same as the minimum number of chains that partition the elements of the poset. It follows that the length of the longest strict southeast chain is the value of $k$ at which the previous sequence stabilizes. But by Theorem~\ref{maptheorem}(a) we have $H_\m(x;k) = H_\n(\Omega_{\m \to \n}(x);k)$, so the stabilization occurs at the same $k$ for both labelings.
\end{proof}

\section{Evacuation and Commutation}
\label{section:commutationProperties}

In this section, we investigate properties of piecewise-linear (and birational) evacuation and connections to Striker-Williams promotion. We also show that the piecewise-linear (and birational) analogue of $\Omega_{\m \to \n}$ is canonically defined for any equivalent moon polyominoes $\m$ and $\n$ by proving the appropriate commutation property for $\Omega$ applied to different maximal rectangles as shown in the combinatorial case by Rubey \cite{rubey}.

\subsection{Evacuation}
Let $S$ be the induced subposet of $R$ on elements $(i,j)$ weakly left of $(r,s)$ as in Section~\ref{section:promotionBackground}. Let $F^{ij}_k$ denote the intersection of the file indexed by $k$ and the order ideal generated by $(i,j)$. Recall that $\Pro_{ij}$ is the following composition of toggles:
\[\Pro_{ij} = \rho_{F^{ij}_{i-j+1}} \circ \cdots \circ \rho_{F^{ij}_{i-1}}.\]
We have the following definition of evacuation.

\begin{Def}
\emph{Piecewise-linear evacuation} $\evac\colon \mathbb R^S \to \mathbb R^S$ is the composition
\[\evac = \Pro_{r,2} \circ \cdots \circ \Pro_{r,s-1} \circ \Pro_{r,s}.\]
We similarly define $\ei$ on $\mathbb R^R$, as well as $\ie$ by transposition.
\end{Def}
It will also be helpful to define the variant
\[\ei_{i,j} = \proid_{i,2} \circ \proid_{i,3} \circ \cdots \circ \proid_{i,j}\]
and $\ie_{i,j}$ similarly.

The evacuation operation is of interest from a chain shifting perspective because it is a composition of $\proid$ maps, which we know shift chain statistics downward in the poset.

\begin{Prop}
\label{Prop:evacuationChainShifting1}
Let $x \in \mathbb{R}^R$ and let $1 \leq u \leq s$. Then for all $k \in \mathbb{Z}_{\geq 0}$
\[H_{1,s-u+1}^{r,s}(x;k) = H_{1,1}^{r,u}(\RSK^{-1} \circ \ei \circ \RSK(x);k).\]
\end{Prop}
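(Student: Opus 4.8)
The plan is to evaluate the right-hand side in $\RSK$-coordinates and match it to the left-hand side using the telescoping structure of evacuation together with the chain shifting lemma. Since $(r,u)$ lies on the last up-diagonal of $R$, piecewise-linear Greene's theorem (Theorem~\ref{Th:plGreene}) gives
\[H_{1,1}^{r,u}(\RSK^{-1} \circ \ei \circ \RSK(x);k) = \sum_{t=0}^{k-1} \ei(\RSK(x))_{r-t,u-t},\]
so everything reduces to understanding $\ei(y)$ on the file through $(r,u)$, where $y = \RSK(x)$.

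First I would isolate the part of $\ei$ that affects these entries. Writing $\ei = \ei_{r,u} \circ G$ with $G = \proid_{r,u+1} \circ \cdots \circ \proid_{r,s}$, observe that $\ei_{r,u} = \proid_{r,2} \circ \cdots \circ \proid_{r,u}$ is a composition of file-rowmotions whose file indices are all $\geq r-u+1$. Since a toggle alters only the coordinate at which it acts, $\ei_{r,u}$ never changes any entry on the file of index $r-u$, which is exactly the file containing all the points $(r-t,u-t)$. Hence $\ei(y)_{r-t,u-t} = G(y)_{r-t,u-t}$. (When $u=s$ this already finishes the argument: $G$ is empty and the identity is just Greene's theorem for $H_{1,1}^{r,s}$.)

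The computational heart is a telescoping evaluation of $G \circ \mathcal{RSK}_{r,s}$. Using $\proid_{r,j} = \mathcal{RSK}_{r+1,j} \circ \mathcal{RSK}_{r,j}^{-1}$ (Lemma~\ref{Lemma:proidRSK}) together with the general form $\mathcal{RSK}_{a+1,b+1} = \mathcal{RSK}_{a,b} \circ \rho^{-1}_{a,b}$ of the relation noted in Section~\ref{section:chainshifting}, the rightmost factor $\proid_{r,s}$ cancels $\mathcal{RSK}_{r,s}^{-1}\circ\mathcal{RSK}_{r,s}$ and leaves $\mathcal{RSK}_{r+1,s}$; each successive $\proid_{r,j}$ then cancels against the accumulated $\mathcal{RSK}$ and peels off a single principal rowmotion. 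I expect this to collapse to
\[G \circ \mathcal{RSK}_{r,s} = \mathcal{RSK}_{r,u} \circ \rho^{-1}_{r,u} \circ \rho^{-1}_{r,u+1} \circ \cdots \circ \rho^{-1}_{r,s-1}.\]
Carrying out this cancellation cleanly—in particular keeping track of the $\phi^{-1}$ tails so that $\mathcal{RSK}_{r,u} = \RSK_{r,u}\circ\phi$ works out—is the step I anticipate being the main obstacle; the surrounding arguments are routine.

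Granting the telescoping, I would conclude as follows. Substituting $y = \mathcal{RSK}_{r,s}(\phi^{-1}(x))$, the displayed identity yields $G(y) = \RSK_{r,u}(w)$ with $w = \phi \circ \rho^{-1}_{r,u} \circ \cdots \circ \rho^{-1}_{r,s-1} \circ \phi^{-1}(x)$. By Proposition~\ref{miniRSK} and Greene's theorem applied inside $[r]\times[u]$ (where $(r,u)$ is the corner),
\[\sum_{t=0}^{k-1} G(y)_{r-t,u-t} = H_{1,1}^{r,u}(w;k).\]
Finally, writing $w = (\phi \circ \rho^{-1}_{r,u} \circ \phi^{-1}) \circ \cdots \circ (\phi \circ \rho^{-1}_{r,s-1} \circ \phi^{-1})(x)$ and applying the transposed form of the chain shifting lemma (Lemma~\ref{Lemma:orderidealshifting}) once for each rowmotion in the order $j = u, u+1, \dots, s-1$ raises the column interval by one at each step—the ideal $[r]\times[j]$ always contains the required $[r]\times[j]$—carrying $H_{1,1}^{r,u}$ through $H_{1,2}^{r,u+1}, \dots$ up to $H_{1,s-u+1}^{r,s}(x;k)$, which is the left-hand side.
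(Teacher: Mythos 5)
Your proof is correct, but it takes a genuinely different route from the paper's. The paper inducts on $s-u$: the base case $u=s$ is Greene's theorem (using that $\ei$ fixes the file through $(r,s)$), and the inductive step peels off one factor $\Omega = \RSK^{-1}\circ\proid\circ\RSK$ via Corollary~\ref{Cor:shiftsinonedirection}, applies the inductive hypothesis to the restriction to $[r]\times[s-1]$ via Proposition~\ref{miniRSK}, and then commutes $\idpro_{r,s-1} = \RSK\circ\RSK_{r,s-1}^{-1}$ past $\ei_{r,s-1}$. You instead unfold evacuation explicitly. Your telescoping identity $G\circ\mathcal{RSK}_{r,s} = \mathcal{RSK}_{r,u}\circ\rho^{-1}_{r,u}\circ\cdots\circ\rho^{-1}_{r,s-1}$ does hold: each step is $\proid_{r,j}\circ\mathcal{RSK}_{r,j} = \mathcal{RSK}_{r+1,j} = \mathcal{RSK}_{r,j-1}\circ\rho^{-1}_{r,j-1}$, where the second equality follows from $\min(r+1,j)=\min(r,j-1)+1$ and the definition; and since the whole identity lives among the toggle-only maps $\mathcal{RSK}_{a,b}$, the $\phi^{-1}$ bookkeeping you flagged as the main obstacle never actually arises---$\phi$ enters only once at the end when you rewrite $\mathcal{RSK}_{r,u}(\rho^{-1}_{r,u}\circ\cdots\circ\rho^{-1}_{r,s-1}(\phi^{-1}(x)))$ as $\RSK_{r,u}(w)$. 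The remaining steps (the file argument for $\ei_{r,u}$, two applications of Greene's theorem, and $s-u$ applications of the transposed Lemma~\ref{Lemma:orderidealshifting} with $I=[r]\times[j]$, whose hypotheses $1<u'\leq v'\leq s$ and $[r]\times[v'-1]\subseteq I$ are satisfied at every step) all check out. What your route buys is a lighter dependency footprint: you never invoke Lemma~\ref{Lemma:doubleRSK} or Corollary~\ref{Cor:shiftsinonedirection}, only Lemma~\ref{Lemma:proidRSK}, Proposition~\ref{miniRSK}, Greene's theorem, and the basic chain shifting lemma. The paper's induction is shorter given that Corollary~\ref{Cor:shiftsinonedirection} is already in hand, since that corollary packages exactly the one-step shift you reconstruct by hand.
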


The symmetric statement given by transposing also holds.

\begin{proof}
We induct on $s-u$. The case $u=s$ follows from Theorem~\ref{Th:plGreene} since $\ei$ does not change any entries in the file containing $(r,s)$.

If $s>u$, 
by Corollary~\ref{Cor:shiftsinonedirection}, 
\[H_{1,s-u+1}^{r,s}(x;k) = H_{1,s-u}^{r,s-1}(\Omega(x);k).\]
The inductive hypothesis applied to the restriction of $\Omega(x)$ to the subrectangle $[r] \times [s-1]$ (using Proposition~\ref{miniRSK}) implies
\[H_{1,s-u}^{r,s-1}(\Omega(x);k) =  H_{1,1}^{r,u}({\RSK_{r,s-1}^{-1}} \circ {\ei_{r,s-1}} \circ {\RSK_{r,s-1}} \circ \Omega(x);k).\]
But by Lemma~\ref{Lemma:proidRSK}, $\RSK \circ \RSK_{r,s-1}^{-1} = \idpro_{r,s-1}$ commutes with $\ei_{r,s-1}$, so this equals \begin{align*}
    H_{1,1}^{r,u}({\RSK^{-1}} \circ {\ei_{r,s-1}} \circ {\RSK} \circ \Omega(x);k) &=  H_{1,1}^{r,u}(\RSK^{-1} \circ \ei_{r,s-1} \circ \proid \circ \RSK(x);k)\\
    &= H_{1,1}^{r,u}(\RSK^{-1} \circ \ei \circ \RSK(x);k),
\end{align*}
as desired.
%
\end{proof}

Let $x^*$ be the dual labeling obtained by rotating $x$ by $180^\circ$, so $x^*_{ij} = x_{r+1-i,s+1-j}$. This rotation map satisfies Lemma~\ref{Lemma:evacuationrotates}.

\begin{Lemma}
\label{Lemma:evacuationrotates}
\[\RSK^{-1} \circ \ee \circ \RSK(x) = x^*.\]
\end{Lemma}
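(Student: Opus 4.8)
The plan is to prove the equivalent identity $\ee \circ \RSK(x) = \RSK(x^*)$; since $\RSK$ is a bijection, this is the same as the stated equation. Setting $z = \RSK^{-1}\circ\ee\circ\RSK(x)$, both $\RSK(z)=\ee\circ\RSK(x)$ and $\RSK(x^*)$ are determined by their principal Greene statistics $H_{1,1}^{r,j}(\,\cdot\,;k)$ and $H_{1,1}^{i,s}(\,\cdot\,;k)$ through Theorem~\ref{Th:plGreene}, so it suffices to match these two families, after which $\RSK(z)=\RSK(x^*)$ forces $z=x^*$. First I would record the elementary rotation identity
\[H_{u_1,v_1}^{u_2,v_2}(x^*;k) = H_{r+1-u_2,\,s+1-v_2}^{\,r+1-u_1,\,s+1-v_1}(x;k),\]
which holds because $(a,b)\mapsto(r+1-a,s+1-b)$ sends weight-$x^*$ systems of nonintersecting paths to weight-$x$ systems with the indicated endpoints. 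In particular the target values are $H_{1,1}^{r,j}(x^*;k) = H_{1,s-j+1}^{r,s}(x;k)$ and $H_{1,1}^{i,s}(x^*;k) = H_{r-i+1,1}^{r,s}(x;k)$.

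The two structural facts I would establish are that $\ei$ and $\ie$ commute and that each fixes one side of the main file. Since $\proid$ is composed of toggles in files strictly left of the file $F_{r-s}$ through $(r,s)$, the evacuation $\ei$ touches only coordinates strictly left of $(r,s)$; dually $\ie$ touches only coordinates strictly right of $(r,s)$. By Observation~\ref{Obs:togglesCommute}(b) they commute, so $\ee = \ei\circ\ie = \ie\circ\ei$. Moreover, if $w = \RSK^{-1}\circ\ie\circ\RSK(x)$ then $\RSK(w) = \ie\circ\RSK(x)$ agrees with $\RSK(x)$ at every coordinate weakly left of $(r,s)$, so $P(w)=P(x)$; symmetrically $Q(w')=Q(x)$ for $w' = \RSK^{-1}\circ\ei\circ\RSK(x)$.

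To compute the first family I would write $z = \RSK^{-1}\circ\ei\circ\RSK(w)$ and apply Proposition~\ref{Prop:evacuationChainShifting1} to $w$, giving $H_{1,1}^{r,j}(z;k) = H_{1,s-j+1}^{r,s}(w;k)$. As $H_{1,s-j+1}^{r,s}$ is a full-height, partial-width statistic, Lemma~\ref{Lemma:samePTableaux} together with $P(w)=P(x)$ yields $H_{1,s-j+1}^{r,s}(w;k) = H_{1,s-j+1}^{r,s}(x;k)$, which is exactly $H_{1,1}^{r,j}(x^*;k)$. Running the transposed argument — writing $z = \RSK^{-1}\circ\ie\circ\RSK(w')$, applying the transpose of Proposition~\ref{Prop:evacuationChainShifting1}, and using $Q(w')=Q(x)$ with the transpose of Lemma~\ref{Lemma:samePTableaux} — gives $H_{1,1}^{i,s}(z;k) = H_{r-i+1,1}^{r,s}(x;k) = H_{1,1}^{i,s}(x^*;k)$, and the two families agree at the corner $(r,s)$. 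Thus all principal Greene statistics of $z$ and $x^*$ coincide, so $z=x^*$. The main obstacle is the bookkeeping in the middle step: one must notice that conjugating the one-sided evacuations by $\RSK$ preserves the $P$- or $Q$-tableau, so that the ``wrong-shape'' statistics $H_{1,s-j+1}^{r,s}(w;k)$ produced by the evacuation shifting lemma can be transported back to $x$ via Lemma~\ref{Lemma:samePTableaux}.
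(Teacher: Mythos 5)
Your proof is correct and follows essentially the same route as the paper's: Proposition~\ref{Prop:evacuationChainShifting1} together with the rotation identity for $H$, combined with Lemma~\ref{Lemma:samePTableaux} and the fact that $\ei$ and $\ie$ toggle on opposite sides of the main file, to match the $P$- and $Q$-tableaux (equivalently, the principal Greene statistics) of $\RSK^{-1} \circ \ee \circ \RSK(x)$ and $x^*$. The only cosmetic difference is that you apply the chain-shifting proposition to the intermediate labeling $w = \RSK^{-1} \circ \ie \circ \RSK(x)$ and transport the resulting statistic back to $x$ via Lemma~\ref{Lemma:samePTableaux}, whereas the paper applies it to $x$ directly and then observes that post-composing with $\ie$ leaves the $P$-tableau unchanged.
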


\begin{proof}
By Proposition~\ref{Prop:evacuationChainShifting1} we have 
\[H_{1,1}^{r,u}(\RSK^{-1} \circ \ei \circ \RSK(x);k) = H_{1,s-u+1}^{r,s}(x;k) = H_{1,1}^{r,u}(x^*;k).\]
By Lemma~\ref{Lemma:samePTableaux} the $P$-tableaux of $x^*$ and $\RSK^{-1} \circ \ei \circ \RSK(x)$ agree, as does then that of $\RSK^{-1} \circ \ee \circ \RSK(x)$ (since $\ie$ will only affect the $Q$-tableau). Likewise the $Q$-tableaux of $x^*$ and $\RSK^{-1} \circ \ee \circ \RSK$ are also the same, so they must be equal. 
\end{proof}
Note that Lemma~\ref{Lemma:evacuationrotates} implies that $\evac$ is an involution (this can also be proved easily from the toggle definition).

We can also prove a chain shifting lemma, which we state below for $\ei$ (an analogous statement holds for $\ie$).

\begin{Lemma}
\label{Lemma:evacuationChainShifting2}
Let $x \in \mathbb{R}^R$ and let $k \in \mathbb{Z}_{\geq 0}$. 
\begin{enumerate}[(a)]
    \item If $1 \leq u \leq v \leq r$, then
    \[H_{u,1}^{v,s}(x;k) = H_{u,1}^{v,s}(\RSK^{-1} \circ \ei \circ \RSK(x);k).\]
    \item If $1 \leq u \leq v \leq s$, then
    \[H_{1,u}^{r,v}(x;k) = H_{1,s+1-v}^{r,s+1-u}(\RSK^{-1} \circ \ei \circ \RSK(x);k).\]
\end{enumerate}
\end{Lemma}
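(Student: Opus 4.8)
The plan is to treat the two parts separately, reducing each to a statement about $P$- or $Q$-tableaux via Lemma~\ref{Lemma:samePTableaux} (and its transpose), and to bypass induction entirely by exploiting the rotation behind Lemma~\ref{Lemma:evacuationrotates}. Throughout I would write $E = \RSK^{-1} \circ \ei \circ \RSK$, the map appearing on the right-hand side of both parts.

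For part (a), the key observation is that $\ei = \evac^P$ is built from the maps $\proid_{r,j}$, which toggle only files strictly left of the central file $F_{r-s}$; hence $\ei$ fixes every coordinate weakly right of $(r,s)$. Since $\RSK(E(x)) = \ei(\RSK(x))$, this forces $Q(E(x)) = Q(x)$. The statistics $H_{u,1}^{v,s}(\,\cdot\,;k)$ with $1 \le u \le v \le r$ are exactly those determined by the $Q$-tableau (the transpose of Lemma~\ref{Lemma:samePTableaux}), so equality of $Q$-tableaux yields part (a) immediately.

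For part (b), I would first record the \emph{rotation identity} $H_{1,u}^{r,v}(y^*;k) = H_{1,s+1-v}^{r,s+1-u}(y;k)$, valid for every $y$: the $180^\circ$ rotation $(i,j) \mapsto (r+1-i,s+1-j)$ carries a system of $k$ nonintersecting paths attaining the left-hand side to one attaining the right-hand side of equal weight, and a short check confirms that the start columns $\{u,\dots,u+k-1\}$ and end columns $\{v-k+1,\dots,v\}$ map to the endpoints defining $H_{1,s+1-v}^{r,s+1-u}$. Taking $y = E(x)$ reduces part (b) to $H_{1,u}^{r,v}(x;k) = H_{1,u}^{r,v}(E(x)^*;k)$ for all $1 \le u \le v \le s$, which by Lemma~\ref{Lemma:samePTableaux} is equivalent to the single statement $P(x) = P(E(x)^*)$.

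It then remains to identify $E(x)^*$. Using Lemma~\ref{Lemma:evacuationrotates} in the form $x^* = \RSK^{-1} \circ \ee \circ \RSK(x)$ with $\ee = \ei \circ \ie$, together with $\RSK(E(x)) = \ei(\RSK(x))$, I obtain $E(x)^* = \RSK^{-1} \circ \ei \circ \ie \circ \ei \circ \RSK(x)$. The crux is that $\ei$ and $\ie$ commute: $\ei$ toggles only files with $i-j > r-s$ while $\ie$ toggles only files with $i-j < r-s$, so Observation~\ref{Obs:togglesCommute}(b) applies. Since $\ei$ is an involution, $\ei \circ \ie \circ \ei = \ie$, giving $E(x)^* = \RSK^{-1} \circ \ie \circ \RSK(x)$; and because $\ie = \evac^Q$ fixes all coordinates weakly left of $(r,s)$, this map preserves the $P$-tableau, so $P(E(x)^*) = P(x)$, finishing part (b). I expect the main obstacle to be the bookkeeping in part (b)—verifying the endpoint shifts in the rotation identity precisely and pinning down exactly which files $\ei$ and $\ie$ toggle so that Observation~\ref{Obs:togglesCommute}(b) is legitimately applicable—whereas part (a) should be essentially immediate once the $Q$-tableau invariance of $\ei$ is noted.
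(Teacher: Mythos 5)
Your proof is correct and takes essentially the same approach as the paper: part (a) is identical ($\ei$ fixes the $Q$-tableau, then apply the transposed Lemma~\ref{Lemma:samePTableaux}), and part (b) uses the same ingredients---Lemma~\ref{Lemma:evacuationrotates}, the $180^\circ$ rotation identity for $H$, and Lemma~\ref{Lemma:samePTableaux}. The only cosmetic difference in (b) is that you rotate the image and identify $\left(\RSK^{-1}\circ\ei\circ\RSK(x)\right)^* = \RSK^{-1}\circ\ie\circ\RSK(x)$ via commutation of $\ei$ with $\ie$, whereas the paper equivalently observes directly that $\RSK^{-1}\circ\ei\circ\RSK(x)$ and $x^*$ share a $P$-tableau.
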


\begin{proof}
For (a), $\RSK^{-1} \circ \ei \circ \RSK$ does not change the $Q$-tableau of $x$, so the result follows by Lemma~\ref{Lemma:samePTableaux} (for the $Q$ tableaux).
By Lemma~\ref{Lemma:evacuationrotates}, $\RSK^{-1} \circ \ei \circ \RSK(x)$ and $x^*$ have the same $P$-tableaux. Then (b) follows from Lemma~\ref{Lemma:samePTableaux} since $H_{1,u}^{r,v}(x;k) = H_{1,s+1-v}^{r,s+1-u}(x^*;k)$.
\end{proof}

\subsection{Striker-Williams Promotion}
In this subsection we use properties of evacuation to prove a chain shifting lemma for Striker-Williams promotion.

\begin{Def}
Let $F_k$ denote the $k$th file of $R$. Then \emph{Striker-Williams promotion} is the map
\[\swpro = \rho_{F_{1-s}} \circ \rho_{F_{2-s}} \circ \dots \circ \rho_{F_{r-1}}.\]
\end{Def}

Striker and Williams introduce this map for \emph{rc-posets} in \cite{strikerwilliams}. Visually, we toggle at every element in the files of $R$ from the leftmost file to the rightmost. Striker-Williams promotion can be written in the $\proid$ notation as
\[\swpro = (\idpro)^{-1} \circ \rho_{F_{r-s}} \circ \proid.\]
We can use this factorization to prove the following chain shifting lemma for $\swpro$.

\begin{Lemma}
\label{Lemma:SWProChainShifting}
Let $R = [r] \times [s]$ and let $k \in \mathbb{Z}_{\geq 0}$.
\begin{enumerate}[(a)]
    \item If $2 \leq u \leq v \leq s$, then
    \[H_{1,u}^{r,v}(x;k) = H_{1,u-1}^{r,v-1}(\RSK^{-1} \circ \swpro \circ \RSK(x);k).\]
    \item If $1 \leq u \leq v \leq r-1$, then
    \[H_{u,1}^{v,s}(x;k) = H_{u+1,1}^{v+1,s}(\RSK^{-1} \circ \swpro \circ \RSK(x);k).\]
\end{enumerate}
\end{Lemma}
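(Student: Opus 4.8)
The plan is to exploit the factorization $\swpro = (\idpro)^{-1} \circ \rho_{F_{r-s}} \circ \proid$ given just before the lemma statement, and to reduce each part to the chain shifting results already established for $\Omega$, for $\idpro$, and for the single rowmotion $\rho_{F_{r-s}} = \rho_{r,s}$. The key observation is that conjugating by $\RSK$ turns $\proid$ into $\Omega = \RSK^{-1} \circ \proid \circ \RSK$, so I would first rewrite
\[\RSK^{-1} \circ \swpro \circ \RSK = \left(\RSK^{-1} \circ \idpro^{-1} \circ \RSK\right) \circ \left(\RSK^{-1} \circ \rho_{r,s} \circ \RSK\right) \circ \Omega.\]
This breaks the problem into three consecutive conjugated maps, each of which I already understand from a chain shifting standpoint.

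For part (a), I would track a chain statistic $H_{1,\bullet}^{r,\bullet}$ through these three maps reading right to left. Applying $\Omega$ first: by Corollary~\ref{Cor:shiftsinonedirection}(b), the statistic $H_{1,u}^{r,v}(x;k)$ equals $H_{1,u-1}^{r,v-1}(\Omega(x);k)$, shifting the down-diagonal window down by one. Next I must argue that the middle factor $\RSK^{-1} \circ \rho_{r,s} \circ \RSK$ and the outer factor $\RSK^{-1} \circ \idpro^{-1} \circ \RSK$ leave this down-diagonal statistic $H_{1,u-1}^{r,v-1}$ unchanged. The cleanest route is to pass to RSK coordinates: the toggles of $\rho_{r,s}$ lie in the file containing $(r,s)$, and the toggles of $\idpro^{-1}$ lie strictly right of $(r,s)$ (they act on the $Q$-tableau). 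Since the down-diagonal statistics with $v \leq s$ depend only on coordinates weakly left of $(r,s)$ — exactly the content of Lemma~\ref{Lemma:samePTableaux} tying $H_{1,u}^{r,v}$ to the $P$-tableau — these factors preserve the relevant $H$. Chaining the three steps gives $H_{1,u}^{r,v}(x;k) = H_{1,u-1}^{r,v-1}(\RSK^{-1} \circ \swpro \circ \RSK(x);k)$, as desired. The hypothesis $u \geq 2$ is exactly what is needed so that $u-1 \geq 1$ and Corollary~\ref{Cor:shiftsinonedirection}(b) applies.

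For part (b), I would run the analogous argument but now tracking an up-diagonal (file) statistic $H_{u,1}^{v,s}$, reading the three conjugated factors in the transposed roles. Here the outer factor $\idpro^{-1}$ is the one that produces the shift: by the transpose of Corollary~\ref{Cor:shiftsinonedirection}(b) (or equivalently the transposed Lemma~\ref{Lemma:doubleRSK}), conjugated $\idpro^{-1}$ shifts the file window $H_{u,1}^{v,s}$ up by one to $H_{u+1,1}^{v+1,s}$, using $v \leq r-1$ so the shifted endpoint $v+1 \leq r$ stays in range. I would then check that $\Omega$ and the middle factor $\RSK^{-1}\circ\rho_{r,s}\circ\RSK$ preserve this file statistic: by Corollary~\ref{Cor:shiftsinonedirection}(a), $\Omega$ fixes every $H_{u,1}^{v,s}$, and the single file of toggles in $\rho_{r,s}$ similarly does not disturb it (this is the transposed counterpart of the $P$-tableau argument, via the $Q$-tableau invariance in Lemma~\ref{Lemma:samePTableaux}).

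The main obstacle I anticipate is the bookkeeping for the two ``preservation'' claims — showing that the middle and one outer factor do not move the statistic being tracked in each part. Rather than computing with explicit toggle formulas, I would phrase these uniformly through Lemma~\ref{Lemma:samePTableaux}: a factor preserves all down-diagonal statistics $H_{1,u}^{r,v}$ (with $v \leq s$) if and only if it preserves the $P$-tableau, and it preserves all file statistics $H_{u,1}^{v,s}$ if and only if it preserves the $Q$-tableau. Since $\idpro$ acts only on coordinates strictly right of $(r,s)$ it fixes the $P$-tableau, and $\rho_{r,s}$ together with $\Omega$ must be handled by their explicit descriptions as toggles in or near the file of $(r,s)$. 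I expect the subtlety to be confirming that $\rho_{r,s}$ genuinely preserves both the $P$- and $Q$-tableaux away from the central file, which should follow from Observation~\ref{Obs:togglesCommute} on how its toggles interact with the $\RSK$ toggle sequence, but this is the step where one must be careful that the stated index ranges ($u \geq 2$ in (a), $v \leq r-1$ in (b)) are exactly what keeps every shifted window inside the rectangle.
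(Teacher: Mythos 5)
Your decomposition $\RSK^{-1}\circ\swpro\circ\RSK=\left(\RSK^{-1}\circ\idpro^{-1}\circ\RSK\right)\circ\left(\RSK^{-1}\circ\rho_{F_{r-s}}\circ\RSK\right)\circ\Omega$ is exactly the engine behind the paper's first proof of part~(a), and your three-step chain (shift via Corollary~\ref{Cor:shiftsinonedirection}(b), then preservation by the two remaining factors) is sound. Two points of comparison. First, the one step you correctly flag as the subtlety is where your stated justification is not quite right: you say the statistics $H_{1,u-1}^{r,v-1}$ depend only on coordinates of $\RSK$ \emph{weakly} left of $(r,s)$, but the toggles of $\rho_{F_{r-s}}$ sit in the file of $(r,s)$ and do change those coordinates, so that claim alone proves nothing for the middle factor. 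What you need, and what the paper supplies, is the sharper statement that for $v-1\le s-1$ these statistics depend only on the restriction of $x$ to $[r]\times[s-1]$, whose $P$-tableau is (by Theorem~\ref{Th:plGreene} and Proposition~\ref{miniRSK}) the set of coordinates of $\RSK(x)$ \emph{strictly} left of $(r,s)$; since $\idpro^{-1}\circ\rho_{F_{r-s}}$ only alters coordinates weakly right of $(r,s)$, Lemma~\ref{Lemma:samePTableaux} applied to the projection finishes the argument. This is a fixable imprecision rather than a gap, but it is the load-bearing step. (Also note that in the paper's conventions $\rho_{r,s}$ denotes rowmotion on the principal order ideal generated by $(r,s)$, i.e.\ full rowmotion, so your identification $\rho_{F_{r-s}}=\rho_{r,s}$ is a notational clash you should avoid.) Second, for part~(b) the paper simply transposes and invokes part~(a), since $\swpro^{-1}$ on the transpose is again $\swpro$; your direct argument tracking $H_{u,1}^{v,s}$ through the three factors (preserved by $\Omega$ and the middle factor, shifted by the conjugated $\idpro^{-1}$) is a valid alternative and checks out, at the cost of redoing the preservation bookkeeping on the $Q$-side. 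The paper also gives a second, independent proof via the conjugation $\swpro=E\circ\rho^{-1}\circ E^{-1}$ with $E=\evac^Q\circ\mathcal{RSK}$ (Proposition~\ref{Prop:conjugate} and Lemma~\ref{Lemma:Einmapsweknow}), which your proposal does not anticipate.
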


\begin{proof}
We first prove (a). Let $\widetilde{x} = \RSK^{-1} \circ \swpro \circ \RSK(x)$, and let $\pi(x)$ denote the coordinate projection of $x$ to coordinates indexed by $[r] \times [s-1]$. By Theorem~\ref{Th:plGreene}, $P(\pi(x))$ is the collection of labels of $P(x)$ strictly left of $(r,s)$. Since $\idproinv \circ \rho_{F_{r,s}}$ only changes coordinates weakly right of $(r,s)$,
\[P(\pi(\widetilde{x})) = P(\pi(\Omega(x))).\]
By Lemma~\ref{Lemma:samePTableaux}
\[H_{1,u-1}^{r,v-1}(\widetilde{x};k) = H_{1,u-1}^{r,v-1}(\Omega(x);k).\]
Then (a) follows from Corollary~\ref{Cor:shiftsinonedirection}.

For (b), note that $x = \RSK^{-1} \circ \swpro^{-1} \circ \RSK(\widetilde{x})$. Let $x^T$ denote the transpose of $x$ and similarly for $\widetilde{x}^T$. Since $\swpro^{-1}$ is given by toggling along files from the rightmost to the leftmost, we have $x^T = \RSK^{-1} \circ \swpro \circ \RSK(\widetilde{x}^{\,T})$. Then (b) becomes
\[H_{1,u}^{s,v}(x^T;k) = H_{1,u+1}^{s,v+1}(\widetilde{x}^{\,T};k),\]
which follows directly from (a).
\end{proof}

For the remainder of this subsection, we reprove Lemma~\ref{Lemma:SWProChainShifting} as a consequence of the fact that $\rho^{-1}$ and $\swpro$ are conjugate to each other in the group generated by all toggles in $R$. Striker and Williams \cite{strikerwilliams} define a composition of toggles $D$ for rc-posets such that
\[\swpro = D \circ \rho^{-1} \circ D^{-1}.\]
We define a similar element here. Let
\[E = \rho_{1,s}^{-1} \circ \rho_{2,s}^{-1} \circ \dots \circ \rho_{r-1,s}^{-1}.\]

\begin{Prop}
\label{Prop:conjugate}
\[\swpro = E \circ \rho^{-1} \circ E^{-1}.\]
\end{Prop}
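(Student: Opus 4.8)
The plan is to show that both $\swpro$ and $E \circ \rho^{-1} \circ E^{-1}$ are equal to one explicit toggle product,
\[D_1^{-1} \circ D_2^{-1} \circ \cdots \circ D_r^{-1},\]
where $D_i = t_{i,1} \circ t_{i,2} \circ \cdots \circ t_{i,s}$ is the operator that toggles the $i$th up-diagonal of $R$ from top to bottom (the toggles within an up-diagonal do \emph{not} commute, so this order matters). The first ingredient is the decomposition $\rho_{i,s} = \rho_{i-1,s} \circ D_i$: since the $i$th up-diagonal is the top layer of the order ideal $[i] \times [s]$, one may pick a linear extension that toggles it first, peeling rowmotion on $[i] \times [s]$ into $D_i$ followed by rowmotion on $[i-1] \times [s]$. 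Iterating gives $\rho_{i,s} = D_1 \circ \cdots \circ D_i$, and since $(r,s)$ generates all of $R$ we have $\rho = \rho_{r,s}$, so $\rho^{-1} = \rho_{r,s}^{-1} = D_r^{-1} \circ \cdots \circ D_1^{-1}$.

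The heart of the argument is a telescoping computation. Set
\[W_m = \rho_{1,s}^{-1} \circ \cdots \circ \rho_{m-1,s}^{-1} \circ \rho_{m,s}^{-1} \circ \rho_{m-1,s} \circ \cdots \circ \rho_{1,s},\]
so that $E \circ \rho^{-1} \circ E^{-1} = W_r$. Using $\rho_{m,s}^{-1} = D_m^{-1} \circ \rho_{m-1,s}^{-1}$, the central three factors collapse:
\[\rho_{m-1,s}^{-1} \circ \rho_{m,s}^{-1} \circ \rho_{m-1,s} = \rho_{m-1,s}^{-1} \circ D_m^{-1}.\]
Since $D_m^{-1}$ toggles only the $m$th up-diagonal while $\rho_{m-2,s}, \ldots, \rho_{1,s}$ toggle up-diagonals at most $m-2$, Observation~\ref{Obs:togglesCommute}(a) (with $k = m-1$) lets me commute $D_m^{-1}$ to the far right, giving the recursion $W_m = W_{m-1} \circ D_m^{-1}$. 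As $W_1 = \rho_{1,s}^{-1} = D_1^{-1}$, unrolling yields $E \circ \rho^{-1} \circ E^{-1} = W_r = D_1^{-1} \circ \cdots \circ D_r^{-1}$.

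It remains to identify $\swpro$ with the same product, which I would do by comparing two toggle words in which each element of $R$ appears exactly once: $\swpro$ toggles the files in order of decreasing $i-j$, while $D_1^{-1} \circ \cdots \circ D_r^{-1}$ toggles up-diagonals in order of decreasing $i$ and, within each, in order of increasing $j$. Two such words define the same element of the toggle group as soon as they agree on the relative order of every non-commuting pair, i.e.\ every cover relation, since one can then sort one word into the other by transposing adjacent commuting toggles. For a cover $(i,j) \lessdot (i+1,j)$ both words toggle $(i+1,j)$ first, and for a cover $(i,j) \lessdot (i,j+1)$ both toggle $(i,j)$ first, so the words agree on all covers; hence $\swpro = D_1^{-1} \circ \cdots \circ D_r^{-1}$, completing the proof.

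I expect the main obstacle to be the bookkeeping in the telescoping step: one must verify carefully that the conjugate $\rho_{m-1,s}^{-1} \circ \rho_{m,s}^{-1} \circ \rho_{m-1,s}$ really collapses to $\rho_{m-1,s}^{-1} \circ D_m^{-1}$, and that the liberated factor $D_m^{-1}$ commutes past every lower rowmotion. Both facts hinge on the up-diagonal decomposition of $\rho_{i,s}$ together with the up-diagonal separation case of Observation~\ref{Obs:togglesCommute}, and keeping track of the correct composition order (these $D_i$ are not involutions) is where the care is needed.
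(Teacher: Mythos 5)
Your proof is correct and follows essentially the same route as the paper's: both decompose $\rho_{i,s}^{-1}$ as (inverse up-diagonal rowmotion) composed with $\rho_{i-1,s}^{-1}$ and then telescope, commuting the liberated up-diagonal factor past the lower rowmotions via Observation~\ref{Obs:togglesCommute}(a). The only difference is cosmetic: you make explicit, via the word-sorting argument on cover relations, the identity $\swpro = D_1^{-1} \circ \cdots \circ D_r^{-1}$ that the paper's induction uses implicitly in the step $\swpro_{i-1,s} \circ \rho_I^{-1} = \swpro_{i,s}$.
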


\begin{proof}
Let $\swpro_{i,j}$ and $E_{i,j}$ denote the composition of toggles on labelings of $R$ analogous to $\swpro$ and $E$ on the order ideal $[i] \times[j]$. Observe that
\[\swpro_{1,s} = E_{1,s} \circ \rho^{-1}_{1,s} \circ E^{-1}_{1,s}\]
since $E_{1,s}$ contains no toggles. By induction suppose that
\[\swpro_{i-1,s} = E_{i-1,s} \circ \rho^{-1}_{i-1,s} \circ E^{-1}_{i-1,s}.\]
If $I = \{i\} \times [s]$, then
\begin{align*}
    E_{i,s} \circ \rho^{-1}_{i,s} \circ E^{-1}_{i,s} &= E_{i,s} \circ \rho^{-1}_I \circ \rho^{-1}_{i-1,s} \circ E_{i,s}^{-1} \\
    &= E_{i,s} \circ \rho^{-1}_I \circ E_{i-1,s}^{-1}
\end{align*}
All toggles in $\rho^{-1}_I$ lie in the $i$th up-diagonal and all toggles in $E_{i-1,s}^{-1}$ lie strictly below the $(i-1)$st up-diagonal. By Observation~\ref{Obs:togglesCommute} these toggle sequences commute, yielding
\[E_{i,s} \circ E_{i-1,s}^{-1} \circ \rho^{-1}_I.\]
We peel $\rho^{-1}_{i-1,s}$ off of $E_{i,s}$ to obtain
\[E_{i-1,s} \circ \rho^{-1}_{i-1,s} \circ E_{i-1,s}^{-1} \circ \rho^{-1}_I.\]
By induction this equals
\[{\swpro_{i-1,s}} \circ \rho^{-1}_I = \swpro_{i,s}.\qedhere\]
\end{proof}

Throughout this paper we use $\mathcal{RSK}_{r,s}$ to map from the setting where one applies rowmotion to the setting where one applies promotion. One desirable quality in a toggle sequence relating rowmotion and Striker-Williams promotion by conjugation is that it has a natural description in terms of RSK. We have the following lemma regarding $E$.

\begin{Lemma}
\label{Lemma:Einmapsweknow}
\[E = {\ie} \circ \mathcal{RSK}.\]
\end{Lemma}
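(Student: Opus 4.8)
The goal is to prove the identity $E = \ie \circ \mathcal{RSK}$, where $E = \rho_{1,s}^{-1} \circ \rho_{2,s}^{-1} \circ \cdots \circ \rho_{r-1,s}^{-1}$ and $\mathcal{RSK} = \mathcal{RSK}_{r,s}$ is the toggle part of $\RSK$. My approach is to establish the more refined family of identities $E_{i,s} = \ie_{i,s} \circ \mathcal{RSK}_{i,s}$ for all $i \in [r]$, and then specialize to $i = r$. This mirrors the inductive strategy used in Proposition~\ref{Prop:conjugate}, where peeling off the top up-diagonal and commuting toggles past one another is the central mechanism.

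**The induction.** First I would check the base case $i=1$: here $\mathcal{RSK}_{1,s}$ and $E_{1,s}$ each contain no rowmotion factors of the relevant type (the subposet $[1]\times[s]$ is a single up-diagonal), and $\ie_{1,s}$ reduces to the identity, so both sides agree essentially trivially. For the inductive step, assuming $E_{i-1,s} = \ie_{i-1,s} \circ \mathcal{RSK}_{i-1,s}$, I would write $E_{i,s} = \rho_{i,s}^{-1} \circ E_{i-1,s}$ directly from the definition of $E$, and then unfold $\rho_{i,s}^{-1} = \rho_{I}^{-1} \circ \rho_{i-1,s}^{-1}$ with $I = \{i\} \times [s]$, exactly as in the proof of Proposition~\ref{Prop:conjugate}. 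Substituting the inductive hypothesis produces a composition that I must rearrange into the form $\ie_{i,s} \circ \mathcal{RSK}_{i,s}$. The key computational input is the nesting relation for $\mathcal{RSK}$ recorded just before Proposition~\ref{miniRSK}, namely that $\mathcal{RSK}_{i,s}$ is obtained from $\mathcal{RSK}_{i-1,s-1}$ (or the appropriate neighbor) by prepending an up-diagonal of toggles, together with Lemma~\ref{Lemma:proidRSK} which identifies $\mathcal{RSK}_{a+1,b} \circ \mathcal{RSK}_{a,b}^{-1}$ with $\proid_{a,b}$, and its transpose giving $\idpro$ factors.

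**The rearrangement.** The heart of the argument is showing how the extra rowmotion factors that distinguish $E_{i,s}$ from $E_{i-1,s}$ combine with the extra toggles distinguishing $\mathcal{RSK}_{i,s}$ from $\mathcal{RSK}_{i-1,s}$ to yield precisely the extra $\idpro$-type factor that distinguishes $\ie_{i,s}$ from $\ie_{i-1,s}$. Concretely, I expect to use Observation~\ref{Obs:togglesCommute}(b): the toggles in $\rho_I^{-1}$ all lie on the $i$th up-diagonal, while the toggles assembling $\mathcal{RSK}_{i-1,s}$ lie strictly below it, so these blocks commute and I can slide $\rho_I^{-1}$ through to land against the appropriate $\mathcal{RSK}$ factor. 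Applying the transpose of Lemma~\ref{Lemma:proidRSK} then converts a product of the form $\mathcal{RSK}_{i,s} \circ \mathcal{RSK}_{i,s-1}^{-1}$ (or the analogous down-diagonal version) into an $\idpro$ factor, which gets absorbed into $\ie_{i,s} = \idpro_{i,2} \circ \cdots \circ \idpro_{i,s}$.

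**The main obstacle.** The principal difficulty will be bookkeeping the exact index ranges so that the toggle blocks genuinely separate along an up-diagonal in the sense required by Observation~\ref{Obs:togglesCommute}(b), and matching the assembled $\idpro$ factors against the definition of $\ie_{i,s}$ term by term. In particular one must be careful that $\ie$ is built from $\idpro$ (transposed promotion) rather than $\proid$, so the commutation lemma must be applied in its transposed form, and the nesting relation for $\mathcal{RSK}$ must be invoked on the correct diagonal. Once the commutations are justified and a single $\idpro_{i,s}$ factor is correctly extracted at each stage, the induction closes and setting $i = r$ yields $E = \ie \circ \mathcal{RSK}$ as claimed.
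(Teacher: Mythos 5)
Your overall route is the same as the paper's: the paper proves $\ie \circ \mathcal{RSK} = E$ by repeatedly peeling an $\idpro$ factor off of $\ie$, converting it via the transpose of Lemma~\ref{Lemma:proidRSK} into a ratio of $\mathcal{RSK}$'s, and collapsing that ratio with the nesting relation $\mathcal{RSK}_{a,b+1} = \mathcal{RSK}_{a-1,b} \circ \rho^{-1}_{a-1,b}$ to emit one $\rho^{-1}_{j,s}$ at a time; your refined family $E_{i,s} = \ie_{i,s} \circ \mathcal{RSK}_{i,s}$ is exactly the paper's intermediate quantity (since $\ie_{i,s}\circ\mathcal{RSK}_{i,s} = \ie_{i-1,s}\circ\mathcal{RSK}_{i,s+1}$), so organizing this as an upward induction is a legitimate repackaging of the same argument.

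However, your inductive step contains a concrete error that would derail the computation if carried out as written. From the definition, the recursion for $E$ is
\[E_{i,s} = E_{i-1,s} \circ \rho^{-1}_{i-1,s},\]
with the new rowmotion factor indexed by $i-1$ and applied \emph{first} (appearing on the right), not $E_{i,s} = \rho^{-1}_{i,s} \circ E_{i-1,s}$ as you claim; in particular $\rho^{-1}_{i,s}$ is not a factor of $E_{i,s}$ at all, so there is nothing to unfold as $\rho_I^{-1}\circ\rho^{-1}_{i-1,s}$, and the planned appeal to Observation~\ref{Obs:togglesCommute}(b) is borrowed from Proposition~\ref{Prop:conjugate} where it is needed for a different reason. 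Once the recursion is corrected, no commutation is required: writing $\ie_{i,s} = \ie_{i-1,s}\circ\idpro_{i,s}$, the transpose of Lemma~\ref{Lemma:proidRSK} gives $\idpro_{i,s}\circ\mathcal{RSK}_{i,s} = \mathcal{RSK}_{i,s+1}$, the nesting relation gives $\mathcal{RSK}_{i,s+1} = \mathcal{RSK}_{i-1,s}\circ\rho^{-1}_{i-1,s}$, and the inductive hypothesis finishes the step. So the strategy is sound and matches the paper, but the bookkeeping you flagged as the main obstacle is precisely where your write-up goes wrong, and the fix also removes the need for the commutation step you anticipated.
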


\begin{proof}
First write
\begin{align*}
    {\ie} \circ \mathcal{RSK} &= {\ie_{r-1,s}} \circ {\idpro} \circ \mathcal{RSK}_{r,s}\\ &= {\ie_{r-1,s}} \circ \mathcal{RSK}_{r,s+1}
\end{align*}
by the transpose of Lemma~\ref{Lemma:proidRSK}. Then by the transpose of Lemma~\ref{Lemma:proidRSK} again,
\begin{align*}
{\ie_{r-1,s}} \circ \mathcal{RSK}_{r,s+1} &= {\ie_{r-2,s}} \circ {\idpro_{r-1,s}} \circ \mathcal{RSK}_{r,s+1}\\
    &={\ie_{r-2,s}} \circ{\mathcal{RSK}_{r-1,s+1}} \circ {\mathcal{RSK}_{r-1,s}^{-1}} \circ \mathcal{RSK}_{r,s+1} \\
    &={\ie_{r-2,s}} \circ{\mathcal{RSK}_{r-1,s+1}} \circ \rho_{r-1,s}^{-1}.
\end{align*}
Repeating this argument, we find that
\begin{align*}
    {\ie_{r-1,s}} \circ \mathcal{RSK}_{r,s+1}
    &= {\ie_{r-2,s}} \circ \mathcal{RSK}_{r-1,s+1} \circ \rho^{-1}_{r-1,s}\\
    &={\ie_{r-3,s}} \circ \mathcal{RSK}_{r-2,s+1} \circ \rho^{-1}_{r-2,s} \circ \rho^{-1}_{r-1,s}\\
    &\phantom{=}\vdots\\
    &= \rho^{-1}_{1,s} \circ \dots \circ \rho^{-1}_{r-1,s}\\ &= E.\qedhere
\end{align*}
\end{proof}

Since $\swpro = E \circ \rho^{-1} \circ E^{-1}$, and $E$ and $\rho^{-1}$ have chain shifting lemmas, we now reprove the previous chain shifting lemma for $\swpro$.


\begin{proof}[Proof of Lemma~\ref{Lemma:SWProChainShifting}]
For (b), if $1 \leq u \leq v \leq r-1$, then by Proposition~\ref{Prop:conjugate} and Lemma~\ref{Lemma:Einmapsweknow},
\begin{align*}
    &H_{u+1,1}^{v+1,s}(\RSK^{-1} \circ \swpro \circ \RSK(x);k) \\
    &= H_{u+1,1}^{v+1,s}(\RSK^{-1} \circ \ie \circ \mathcal{RSK}_{r,s} \circ \rho^{-1} \circ \mathcal{RSK}^{-1}_{r,s} \circ \ie \circ \RSK(x);k).
\end{align*}
By Lemma~\ref{Lemma:evacuationChainShifting2} this expression simplifies to
\[H_{r-v,1}^{r-u,s}(\phi \circ \rho^{-1} \circ \mathcal{RSK}^{-1}_{r,s} \circ \ie \circ \RSK(x);k).\]
By Lemma~\ref{Lemma:orderidealshifting} this equals
\[H_{r-v+1,1}^{r-u+1,s}(\RSK \circ \ie \circ \RSK(x);k),\]
and so (b) follows by another application of Lemma~\ref{Lemma:evacuationChainShifting2}. A similar proof yields (a).
\end{proof}

\begin{Ex} Proposition~\ref{Prop:conjugate} and Lemma~\ref{Lemma:Einmapsweknow} imply that the diagram in Figure~\ref{fig:SWProChainShifting} commutes. All red chains in Figure~\ref{fig:SWProChainShifting} have the same weight, which is the maximum weight in each blue rectangle. Then $\swpro$ shifts the maximum weight among chains in interval $[i,j] \times [s]$ upward in the poset and shifts weight of chains in intervals $[r] \times [i,j]$ downward in the poset. Alternatively, we can think of this upward shifting by first applying $\RSK^{-1} \circ \ie \circ \RSK$, which reflects the rectangle along the axis perpendicular to the direction we shift. Note that chains themselves do not reflect, only the region they lie in. Then applying $\phi \circ \rho^{-1} \circ \phi^{-1}$ shifts the maximum weight of chains downward in the poset. Reflecting again transforms this downward shift into an upward shift. 

\begin{figure}
    \centering
    \begin{tikzpicture}
    \begin{scope}[rotate=45]
    \draw (0,0) grid (2,4);
    
    \draw [blue, fill = blue, opacity = 0.17] (-0.2,0.8)--(2.2,0.8)--(2.2,2.2)--(-0.2,2.2)--cycle;
    
    \draw [red,ultra thick] (0,1)--(1,1)--(1,2)--(2,2);
    
    \foreach \x in {0,...,2}{
    \foreach \y in {0,...,4}{
    \node[bigB] at (\x,\y) {};
    }
    }
    
    \node[wR] at (0,1) {};
    \node[wR] at (1,1) {};
    \node[wR] at (1,2) {};
    \node[wR] at (2,2) {};
    
    \node at (0.35,-0.35) {0.5};
    \node at (1.3,-0.3) {0};
    \node at (2.35,-0.35) {0.2};
    
    \node at (0.35,0.65) {0.1};
    \node at (1.4,0.6) {0.05};
    \node at (2.4,0.6) {0.05};
    
    \node at (0.25,1.75) {0};
    \node at (1.42,1.58) {0.15};
    \node at (2.25,1.75) {0};
    
    \node at (0.35,2.65) {0.2};
    \node at (1.4,2.6) {0.05};
    \node at (2.35,2.65) {0.1};
    
    \node at (0.4,3.6) {0.15};
    \node at (1.25,3.75) {0};
    \node at (2.25,3.75) {0};
    \end{scope}
    
    \node at (3.7,2.52) {$\phi \circ \rho^{-1} \circ \phi^{-1}$};
    \draw [-stealth, ultra thick] (2.7,2.12)--(4.7,2.12);
    
    \begin{scope}[rotate=45, shift = {(6,-6)}]
    \draw (0,0) grid (2,4);
    
    \draw [blue, fill = blue, opacity = 0.17] (-0.2,-0.2)--(2.2,-0.2)--(2.2,1.2)--(-0.2,1.2)--cycle;
    
    \draw [red,ultra thick] (0,0)--(1,0)--(1,1)--(2,1);
    
    \foreach \x in {0,...,2}{
    \foreach \y in {0,...,4}{
    \node[bigB] at (\x,\y) {};
    }
    }
    
    \node[wR] at (0,0) {};
    \node[wR] at (1,0) {};
    \node[wR] at (1,1) {};
    \node[wR] at (2,1) {};
    
    \node at (0.25,-0.25) {0};
    \node at (1.4,-0.4) {0.15};
    \node at (2.4,-0.4) {0.05};
    
    \node at (0.35,0.75) {0};
    \node at (1.35,0.65) {0.1};
    \node at (2.4,0.6) {0.05};
    
    \node at (0.35,1.65) {0.2};
    \node at (1.25,1.75) {0};
    \node at (2.4,1.6) {0.15};
    
    \node at (0.4,2.6) {0.05};
    \node at (1.35,2.65) {0.1};
    \node at (2.25,2.75) {0};
    
    \node at (0.25,3.75) {0};
    \node at (1.25,3.75) {0};
    \node at (2.4,3.6) {0.05};
    \end{scope}
    
    \node at (1.2,-2) {\begin{tabular}{c}$\RSK^{-1} \circ \ie$  \\ $\circ \RSK$\end{tabular}};
    \draw [-stealth, ultra thick] (-0.71,-1)--(-0.71,-3);
    
    \begin{scope}[shift={(8.49,0)}]
        \node at (-2.62,-2) {\begin{tabular}{c}$\RSK^{-1} \circ \ie$  \\ $\circ \RSK$\end{tabular}};
        \draw [-stealth, ultra thick] (-0.71,-1)--(-0.71,-3);
    \end{scope}
    
    \begin{scope}[rotate=45,shift={(-5.75,-5.75)}]
    \draw (0,0) grid (2,4);
    
    \draw [blue, fill = blue, opacity = 0.17] (-0.2,1.8)--(2.2,1.8)--(2.2,3.2)--(-0.2,3.2)--cycle;
    
    \draw [red,ultra thick] (0,2)--(1,2)--(1,3)--(2,3);
    
    \foreach \x in {0,...,2}{
    \foreach \y in {0,...,4}{
    \node[bigB] at (\x,\y) {};
    }
    }
    
    \node[wR] at (0,2) {};
    \node[wR] at (1,2) {};
    \node[wR] at (1,3) {};
    \node[wR] at (2,3) {};
    
    \node at (0.25,-0.25) {0};
    \node at (1.25,-0.25) {0};
    \node at (2.4,-0.4) {0.15};
    
    \node at (0.4,0.6) {0.15};
    \node at (1.25,0.75) {0};
    \node at (2.35,0.65) {0.2};
    
    \node at (0.4,1.6) {0.05};
    \node at (1.35,1.65) {0.1};
    \node at (2.25,1.75) {0};
    
    \node at (0.4,2.6) {0.05};
    \node at (1.4,2.6) {0.15};
    \node at (2.25,2.75) {0};
    
    \node at (0.35,3.65) {0.7};
    \node at (1.25,3.75) {0};
    \node at (2.25,3.75) {0};
    \end{scope}

    \node at (3.7,-5.51) {\begin{tabular}{c}$\RSK^{-1} \circ \swpro$ \\ $\circ \RSK$\end{tabular}};
    \draw [-stealth, ultra thick] (2.7,-6.01)--(4.7,-6.01);
    
    \begin{scope}[rotate=45,shift={(0.25,-11.75)}]
    \draw (0,0) grid (2,4);
    
    \draw [blue, fill = blue, opacity = 0.17] (-0.2,2.8)--(2.2,2.8)--(2.2,4.2)--(-0.2,4.2)--cycle;
    
    \draw [red,ultra thick] (0,3)--(1,3)--(1,4)--(2,4);
    
    \foreach \x in {0,...,2}{
    \foreach \y in {0,...,4}{
    \node[bigB] at (\x,\y) {};
    }
    }
    
    \node[wR] at (0,3) {};
    \node[wR] at (1,3) {};
    \node[wR] at (1,4) {};
    \node[wR] at (2,4) {};
    
    \node at (0.25,-0.25) {0};
    \node at (1.4,-0.4) {0.05};
    \node at (2.25,-0.25) {0};
    
    \node at (0.25,0.75) {0};
    \node at (1.4,0.6) {0.15};
    \node at (2.25,0.75) {0};
    
    \node at (0.4,1.6) {0.15};
    \node at (1.25,1.75) {0};
    \node at (2.35,1.65) {0.2};
    
    \node at (0.4,2.6) {0.05};
    \node at (1.35,2.65) {0.1};
    \node at (2.25,2.75) {0};
    
    \node at (0.4,3.6) {0.05};
    \node at (1.4,3.6) {0.05};
    \node at (2.35,3.65) {0.1};
    \end{scope}
    
    \end{tikzpicture}
    \caption{A commuting diagram showing how the chain shifting lemma for $\rho^{-1}$ induces the chain shifting lemma for $\swpro$.}
    \label{fig:SWProChainShifting}
\end{figure}
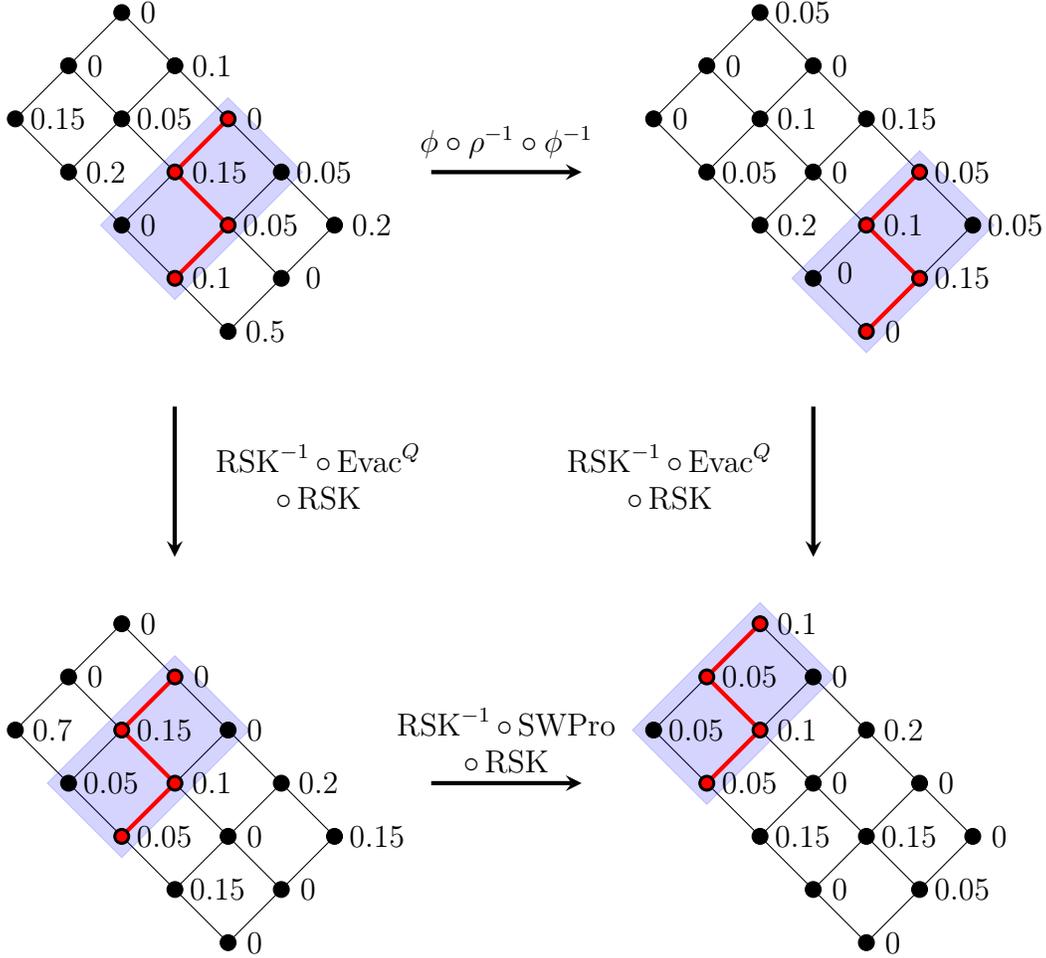

\end{Ex}

\subsection{Proof of Lemma~\ref{Lemma:placticMonoid}} \label{sec:placticMonoid}
In this subsection we prove Lemma~\ref{Lemma:placticMonoid}. For a labeling $x$ of $[r] \times [s]$, let $\overline{Q}(x)$ denote the coordinates of the $Q$-tableau that lie strictly right of $(r,s)$. Given a labeling $(x_1 \concat \dots \concat x_n)$ and some $i \in [n]$, we say that a function $f$ is \emph{independent} of $\overline{Q}(x_i)$ if $f$ can be expressed as a function of $x_1,\dots,x_{i-1},x_{i+1},\dots,x_n,$ and $P(x_i)$.

\begin{Prop}
\label{Prop:RSKcoordinates}
Let $x = x_1 \concat x_2$ be a labeling of $R$, where $(i,s)$ is the maximum element labeled by $x_1$.  Then the coordinates of $\RSK_{i,s}(x)$ weakly left of $(i,s)$ can be expressed invertibly in terms of $P(x_1)$ and $x_2$ (independent of $\overline{Q}(x_1)$).
\end{Prop}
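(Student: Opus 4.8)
The plan is to split the boxes weakly left of $(i,s)$ according to whether they lie in the $x_1$-region $[i]\times[s]$ or in the $x_2$-region (rows $i+1$ through $r$), and to apply Proposition~\ref{miniRSK} to each. First I would note that every box of the $x_2$-region is in fact strictly left of $(i,s)$, since a box $(i',j')$ with $i'>i$ has file index $i'-j'\ge i'-s>i-s$. Thus the coordinates of $\RSK_{i,s}(x)$ weakly left of $(i,s)$ are exactly those boxes in $[i]\times[s]$ weakly left of $(i,s)$, together with the entire $x_2$-region. By Proposition~\ref{miniRSK} with $(a,b)=(i,s)$ (so that the coordinate projection of $x$ is precisely $x_1$), the first group are the entries of $\RSK(x_1)$ weakly left of $(i,s)$; these constitute $P(x_1)$ itself and are trivially functions of $P(x_1)$, involving neither $x_2$ nor $\overline{Q}(x_1)$.

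It remains to handle the $x_2$-region, where Proposition~\ref{miniRSK} gives $\RSK_{i,s}(x)_{i'j'}=\phi^{-1}(x)_{i'j'}$. Here I would invoke the explicit formula for the inverse transfer map, so that $\phi^{-1}(x)_{i'j'}$ is the maximum of $\sum_\ell x_{q_\ell}$ over saturated chains $q_1\lessdot\cdots\lessdot q_k=(i',j')$. Since $i'>i$, any such chain either lies entirely in the $x_2$-region or crosses row $i$ at a unique last vertex $(i,c)$, whose successor is forced to be $(i+1,c)$. Splitting at this crossing decomposes the chain weight as the weight of an $x_1$-chain ending at $(i,c)$ plus the weight of an $x_2$-chain from $(i+1,c)$ to $(i',j')$; maximizing each piece separately yields
\[\phi^{-1}(x)_{i'j'}=\max\Big(\phi^{-1}(x_2)_{i'j'},\ \max_{c\le j'}\big[\phi^{-1}(x_1)_{ic}+M_c\big]\Big),\]
where $M_c$ denotes the maximum weight of an $x_2$-chain from $(i+1,c)$ to $(i',j')$. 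Both $\phi^{-1}(x_2)_{i'j'}$ and $M_c$ depend only on $x_2$.

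The key step is to recognize that the $x_1$-contribution $\phi^{-1}(x_1)_{ic}$ is itself a coordinate of $P(x_1)$. Indeed $(i,c)$ lies in the top up-diagonal of $[i]\times[s]$, which is toggled by none of the rowmotions comprising $\RSK$ on $[i]\times[s]$ (each such rowmotion is associated to an order ideal contained in $[i-1]\times[s-1]$), so $\phi^{-1}(x_1)_{ic}=\RSK(x_1)_{ic}$; equivalently, by Theorem~\ref{Th:plGreene} this is the Greene invariant $H_{1,1}^{i,c}(x_1;1)$. As $(i,c)$ is weakly left of $(i,s)$, this entry belongs to $P(x_1)$. Substituting, the displayed formula becomes a subtraction-free (max-plus) expression in the top-row entries of $P(x_1)$ and the entries of $x_2$, hence detropicalizes to a birational expression in $P(x_1)$ and $x_2$ alone, independent of $\overline{Q}(x_1)$.

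I expect the main obstacle to be the bookkeeping in the chain-splitting step: verifying that the $x_1$-dependence of $\phi^{-1}(x)$ on the $x_2$-region collapses entirely onto the row-$i$ values $\phi^{-1}(x_1)_{ic}$, so that no information about $x_1$ below row $i$—and in particular nothing about $\overline{Q}(x_1)$—survives. Once the identification $\phi^{-1}(x_1)_{ic}=P(x_1)_{ic}$ is established, the remaining assembly of the formula is routine.
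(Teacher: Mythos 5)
Your proof is correct and takes essentially the same route as the paper's: apply Proposition~\ref{miniRSK} to identify the coordinates in $[i]\times[s]$ with $P(x_1)$ and the coordinates above with $\phi^{-1}(x)$, then decompose each maximizing saturated chain at its unique crossing of the $i$th up-diagonal to get $\phi^{-1}(x)_{k,j}=\max_{\ell\le j}\left(\phi^{-1}(x)_{i,\ell}+H_{i+1,\ell}^{k,j}(x;1)\right)$ with the first summand a coordinate of $P(x_1)$ and the second depending only on $x_2$. Your extra case of a chain avoiding the $i$th up-diagonal is vacuous (every saturated chain in the $\phi^{-1}$ formula starts at the minimal element $(1,1)$) and harmless, and the only thing the paper includes that you omit is the observation that the procedure is invertible, which is not needed for the statement itself but is reused later.
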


\begin{proof}
Let $(k,j) \in R$ with $(k,j)$ weakly left of $(i,s)$. If $k \leq i$, then $(k,j) \in [i] \times [s]$ and by Proposition~\ref{miniRSK}
\[\RSK_{i,s}(x)_{k,j} = P(x_1)_{k,j}.\]
If $k > i$, then by Proposition~\ref{miniRSK}
\[\RSK_{i,s}(x)_{k,j} = \phi^{-1}(x)_{k,j} = \max\limits_{\ell \in [1,j]} \left(\phi^{-1}(x)_{i,\ell} + H_{i+1,\ell}^{k,j}(x;1)\right).\]
Note that $\phi^{-1}(x)_{i,\ell}$ is a coordinate of $P(x_1)$ and $H_{i+1,\ell}^{k,j}(x;1)$ depends only on $x_2$.

We can easily invert this procedure to recover $P(x_1)$ (trivial) and $x_2$ by computing \[\phi(\RSK_{i,s}(x))_{k,j} = \RSK_{i,s}(x)_{k,j} - \max \left\{\RSK_{i,s}(x)_{k,j-1}, \RSK_{i,s}(x)_{k-1,j}\right\}\] for $k > i$.
\end{proof}


We are now ready to prove that $P(x_1 \concat x_2)$ only depends on $P(x_1)$ and $P(x_2)$.

\begin{proof}[Proof of Lemma~\ref{Lemma:placticMonoid}]

Let $R = [r] \times [s]$, let $x_1$ be a labeling of $[i] \times [s]$ for some $i<r$, and let $x_2$ be a labeling of $[r-i] \times [s]$. It suffices to show that $P(x_1 \concat x_2)$ is independent of $\overline{Q}(x_1)$ and $\overline{Q}(x_2)$.

We first show that $P(x_1 \concat x_2)$ is independent of $\overline{Q}(x_1)$. Let $y = \RSK_{i,s}(x_1 \concat x_2)$. By Proposition~\ref{Prop:RSKcoordinates} $y_{k,j}$ is a function of $P(x_1)$ and $x_2$ when $(k,j)$ is weakly left of $(i,s)$. Note that
\[\RSK(x) = \RSK \circ \RSK^{-1}_{i,s}(y).\]
By Proposition~\ref{Prop:RSKCancellation}, $\RSK \circ \RSK^{-1}_{i,s}$ is equivalent to a toggle sequence that only contains toggles strictly left of $(i,s)$. The result of each such toggle depends only on entries weakly left of $(i,s)$, which are all functions of $P(x_1)$ and $x_2$.

We now show that $P(x_1 \concat x_2)$ is independent of $\overline{Q}(x_2)$. Let 
$x^*_2 \concat x^*_1 = (x_1 \concat x_2)^*$. By Lemma~\ref{Lemma:evacuationrotates},\[P(x_1 \concat x_2) = \text{Evac}(P(x^*_2 \concat x^*_1)),\]
so it suffices to show that $P(x^*_2 \concat x^*_1)$ is a function of $x_1$ and $P(x_2)$. By the previous argument, we know $P(x^*_2 \concat x^*_1)$ is a function of $P(x^*_2)$ and $x^*_1$. By Lemma~\ref{Lemma:evacuationrotates}, $P(x^*_2) = \evac(P(x_2))$ is a function of $P(x_2)$, and clearly $x^*_1$ is a function of $x_1$.
\end{proof}

\subsection{Commutation}
In this subsection, we prove a commutation theorem for maps on labelings of moon polyominoes. This theorem will apply not only to maps like $\Omega$ applied to a maximal rectangle of a moon polyomino, but also to the map $\phi \circ \rho^{-1} \circ \phi^{-1}$ applied as follows. 

Let $\m$ be a moon polyomino with maximal rectangle $R$. Suppose that the equivalent moon polyomino $\n$ is formed from $\m$ by shifting boxes outside $R$ downward parallel to both sets of sides of $R$. (This differs from the extension of $\Omega$, where we shift boxes parallel to only one side---this reflects the differences in the chain shifting lemmas of $\phi \circ \rho^{-1} \circ \phi^{-1}$ and $\Omega$.) We can extend $\phi \circ \rho^{-1} \circ \phi^{-1}$ to a map $\mathbb{R}^{\m} \to \mathbb{R}^{\n}$ by applying $\phi \circ \rho^{-1} \circ \phi^{-1}$ to the coordinates of $R$ and shifting all other coordinates appropriately.

\begin{Th}
\label{Th:commutation}
Let $\psi_1$ and $\psi_2$ be maps in the set
\[\left\{ \RSK^{-1} \circ \proid \circ \RSK, \RSK^{-1} \circ \idpro \circ \RSK, \phi \circ \rho^{-1} \circ \phi^{-1} \right\}\]
acting on distinct maximal rectangles $R_1$ and $R_2$ of some moon polyomino, respectively (shifting labels outside the rectangle appropriately). Then
\[\psi_1 \circ \psi_2 = \psi_2 \circ \psi_1.\]
\end{Th}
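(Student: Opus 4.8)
The plan is to prove the two composites $\psi_1 \circ \psi_2$ and $\psi_2 \circ \psi_1$ equal by showing they agree box-by-box on their common target. Both compositions carry the boxes of the given moon polyomino to a single polyomino $\m''$ by the same bijection on boxes --- this is a purely combinatorial statement about commuting diagonal shifts, independent of the labels --- so the real content is that the two fillings of $\m''$ coincide. Since every box lies in some maximal rectangle and a labeling of a rectangle is recovered from its $P$- and $Q$-tableaux via $\RSK$, it suffices to show that for each maximal rectangle $T$ of $\m''$ the restrictions $(\psi_1 \circ \psi_2(x))|_T$ and $(\psi_2 \circ \psi_1(x))|_T$ have the same $P$-tableau and the same $Q$-tableau. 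By Lemma~\ref{Lemma:samePTableaux} and its transpose, this in turn reduces to showing that the two orders of composition produce identical chain statistics $H$ spanning $T$ in each diagonal direction. Before doing this I would dispose of the trivial case: whenever the toggle supports of $\psi_1$ and $\psi_2$, together with the boxes their shifts move, are separated by an up-diagonal or a file, Observation~\ref{Obs:togglesCommute} already forces commutation, so I may assume $R_1$ and $R_2$ genuinely interact.

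The engine of the argument is a uniform \emph{shift-and-preserve} description of all three admissible maps. Each of $\RSK^{-1} \circ \proid \circ \RSK$, $\RSK^{-1} \circ \idpro \circ \RSK$, and $\phi \circ \rho^{-1} \circ \phi^{-1}$ leaves the value of every relevant chain statistic $H$ unchanged and merely translates its supporting region: Corollary~\ref{Cor:shiftsinonedirection} governs the first two maps, while Lemma~\ref{Lemma:orderidealshifting}, applied in both diagonal directions, governs the third, exactly as in the proof of Theorem~\ref{maptheorem}(a). Consequently, tracing any spanning statistic of a maximal rectangle $T$ of $\m''$ back through $\psi_1 \circ \psi_2$ and back through $\psi_2 \circ \psi_1$ yields in both cases the same statistic of the original $x$, taken over the same unshifted pullback region, because the region-translations induced by the shift at $R_1$ and the shift at $R_2$ commute. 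This makes the per-rectangle statistics of the two composites agree, which by Lemma~\ref{Lemma:samePTableaux} (in both orientations) gives equality of the $P$- and $Q$-tableaux of $(\psi_1 \circ \psi_2(x))|_T$ and $(\psi_2 \circ \psi_1(x))|_T$, hence of the restrictions, hence of the fillings.

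To carry out the bookkeeping across the overlap of the two rectangles I would reassemble the tableau data from slabs lying inside and outside each rectangle using the plactic monoid Lemma~\ref{Lemma:placticMonoid} and the restriction statement Proposition~\ref{miniRSK}, precisely as in the proof of Theorem~\ref{maptheorem}. This requires the structural input that two distinct maximal rectangles of a moon polyomino overlap in a controlled fashion --- their up-diagonal and down-diagonal extents are oppositely nested, so their intersection is again a rectangle and the families of diagonals moved by the two shifts interlock predictably. Establishing this classification of how $R_1$ and $R_2$ meet is the geometric ingredient that lets the chain-shifting lemmas be applied slab by slab.

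The main obstacle I anticipate is that each $\psi_i$ performs only a \emph{partial} shift: it moves the boxes outside $R_i$ but fixes those inside, so a chain in a maximal rectangle $T$ of $\m''$ may pass through the shifting region of $R_1$, of $R_2$, of both, or of neither, and the chain-shifting lemmas must be invoked for chains living inside the maximal rectangles of the intermediate polyominoes rather than inside a single ambient $[r] \times [s]$. Verifying that these partial shifts commute, and that the hypotheses of Lemma~\ref{Lemma:orderidealshifting} and Corollary~\ref{Cor:shiftsinonedirection} (that the relevant order ideal contains the full shifting slab) hold for every such $T$, is where the oppositely-nested structure of $R_1$ and $R_2$ is essential and where the bulk of the work lies; the case $\psi_1 = \psi_2 = \phi \circ \rho^{-1} \circ \phi^{-1}$, which shifts in both diagonal directions at once, should be the most delicate to track.
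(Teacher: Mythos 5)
There is a genuine gap in the engine of your argument. You claim that each of the three maps ``leaves the value of every relevant chain statistic $H$ unchanged and merely translates its supporting region,'' and that therefore all spanning statistics of every maximal rectangle $T$ of $\m''$ can be traced back to the same statistics of $x$ under either order of composition. But for each maximal rectangle $T$ and each map $\psi_i$, only \emph{one} of the two families of spanning statistics of $T$ is controlled by the chain-shifting results: the family determining the tableau of $T$ in the direction along which $T\cap R_i$ is a full slab (this is exactly what the proof of Theorem~\ref{maptheorem}(a) extracts via Lemma~\ref{Lemma:placticMonoid} and Corollary~\ref{Cor:shiftsinonedirection}). The perpendicular family consists of chains that either fail to span $R_i$ in the direction required by Corollary~\ref{Cor:shiftsinonedirection}, or leave $R_i$ altogether, and no cited lemma says what happens to them. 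Indeed $\psi_i$ genuinely \emph{changes} one of the two tableaux of the overlap region (e.g.\ $\RSK^{-1}\circ\proid\circ\RSK$ preserves the $Q$-tableau of its own rectangle but alters the $P$-tableau), so knowing that both composites preserve the controlled tableau of $T$ does not pin down the other tableau, and your reduction via Lemma~\ref{Lemma:samePTableaux} cannot be completed.

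The missing idea is the functional-independence statement that the paper isolates as Lemma~\ref{Lemma:independenceOfQ} (built on Proposition~\ref{Prop:evacCoordinates}, which in turn needs evacuation and Lemma~\ref{Lemma:evacuationrotates}, together with Proposition~\ref{Prop:maps} to put all three maps in the required form $\RSK^{-1}\circ T\circ\RSK$ or $\RSK^{-1}\circ T\circ\idpro\circ\RSK$). Writing the overlap slab as $x_2$ in a decomposition $x_1\concat x_2\concat x_3$, the point is not merely that $\psi_1$ preserves certain statistics, but that the tableau of $x_2$ which $\psi_1$ \emph{does} change is a function only of $x_1$, $P(x_2)$, and $x_3$, i.e.\ is independent of $\overline{Q}(x_2)$. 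Since $\psi_2$ preserves $P(x_2)$ and merely shifts $x_1$ and $x_3$, applying $\psi_1$ before or after $\psi_2$ then produces the same $P$-tableau on the overlap, and symmetrically the same $Q$-tableau, which is what forces the overlap labels to agree. Your proposal never engages with this independence, and the ``shift-and-preserve'' bookkeeping alone cannot substitute for it; this is where the evacuation machinery of Section~\ref{section:commutationProperties} is genuinely needed. The parts of your plan that do match the paper --- reducing to $R_1\cup R_2$, exploiting the oppositely-nested overlap structure, and comparing $P$- and $Q$-tableaux of the pieces --- are sound but are the easy half of the argument.
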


We prove Theorem~\ref{Th:commutation} at the end of this subsection. In particular, note that (disallowing $\phi \circ \rho^{-1} \circ \phi^{-1}$) this implies that the map $\Omega_{\m \to \n}$ described in Section~\ref{section:fillings} between any equivalent moon polyominoes $\m$ and $\n$ is independent of the choice and order of shifts from one to the other.

To prove Theorem~\ref{Th:commutation}, we will need some preliminary results about how applying various maps depends on the $Q$-tableaux of subrectangles.

\begin{Prop}
\label{Prop:evacCoordinates}
Let $x = x_1 \concat x_2 \concat x_3$ be a labeling of $R$ such that $(i,s)$ and $(j,s)$ are the maximum elements labeled by $x_1$ and $x_2$, respectively. If $\widetilde{x} = \RSK^{-1} \circ \ie_{j,s} \circ \RSK(x) = \widetilde x_2 \concat \widetilde x_1 \concat \widetilde x_3$ (where $\widetilde x_i$ and $x_i$ label rectangles of the same size), then:
\begin{enumerate}[(a)]
    \item $Q(\widetilde{x}_2) = \evac(Q(x_2))$, and
    \item the coordinates of $\RSK(\widetilde x)$ weakly left of $(j,s)$ can be expressed invertibly in terms of $x_1$, $P(x_2)$, and $x_3$ (independent of $\overline Q(x_2)$).
\end{enumerate}
\end{Prop}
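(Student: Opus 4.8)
The plan is to base everything on the single identity $\RSK(\widetilde{x}) = \ie_{j,s}(\RSK(x))$, which follows by cancelling the outer $\RSK \circ \RSK^{-1}$. The first thing I would record is the \emph{support} of $\ie_{j,s}$: it is a composition of $Q$-promotions on the rectangles $[k]\times[s]$ with $k \le j$, so all of its toggles lie in files of index at most $j-s-1$, i.e.\ strictly right of the file through $(j,s)$. Consequently $\ie_{j,s}$ fixes every coordinate of $\RSK(x)$ weakly left of $(j,s)$ and alters only $Q$-coordinates strictly right of $(j,s)$ inside $[j]\times[s]$. I would also set $m = j-i$ and fix the factorization $\RSK_{r,s} = \Psi \circ \RSK_{j,s}$, where by Lemma~\ref{Lemma:proidRSK} (iterated in the row direction) $\Psi = \mathcal{RSK}_{r,s}\circ\mathcal{RSK}_{j,s}^{-1} = \proid_{r-1,s}\circ\cdots\circ\proid_{j,s}$, whose toggles all have file index at least $j-s+1$.

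For (b), since $\ie_{j,s}$ fixes the coordinates weakly left of $(j,s)$, it suffices to prove that those coordinates of $\RSK_{r,s}(x)$ depend only on $x_1$, $P(x_2)$, and $x_3$. Each toggle of $\Psi$ has file index $\ge j-s+1$, hence reads and writes only coordinates weakly left of $(j,s)$, so $\Psi$ restricts to a self-map of those coordinates. Applying Proposition~\ref{Prop:RSKcoordinates} to the splitting $(x_1\concat x_2)\concat x_3$ (whose lower block has top row $j$) shows the coordinates of $\RSK_{j,s}(x)$ weakly left of $(j,s)$ are functions of $P(x_1\concat x_2)$ and $x_3$, and by Lemma~\ref{Lemma:placticMonoid} $P(x_1\concat x_2)$ depends only on $P(x_1)$ and $P(x_2)$. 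Pushing this through the self-map $\Psi$ gives the coordinates of $\RSK_{r,s}(x)$ weakly left of $(j,s)$, establishing (b).

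For (a), I would first reduce to the rectangle $[j]\times[s]$. Because the toggles of $\Psi$ (file index $\ge j-s+1$) and those of $\ie_{j,s}$ (file index $\le j-s-1$) are separated by the file through $(j,s)$, Observation~\ref{Obs:togglesCommute}(b) gives $\Psi^{-1}\circ\ie_{j,s}\circ\Psi = \ie_{j,s}$, whence $\widetilde{x} = \RSK_{j,s}^{-1}\circ\ie_{j,s}\circ\RSK_{j,s}(x)$. Every map on the right is supported on $[j]\times[s]$, and $\phi,\phi^{-1}$ localize there on rows $\le j$, so the bottom $j$ rows of $\widetilde{x}$ equal $\RSK^{-1}\circ\ie\circ\RSK(x_1\concat x_2)$ computed inside $[j]\times[s]$, independent of $x_3$. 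Now I would apply the $\ie$-version of Lemma~\ref{Lemma:evacuationChainShifting2}(b) on $[j]\times[s]$: for $1\le a\le b\le m$,
\[H_{a,1}^{b,s}(\widetilde{x}_2;k) = H_{a,1}^{b,s}(\widetilde{x};k) = H_{m+1-b,1}^{m+1-a,s}(x_2;k),\]
the last equality rewriting the reflected statistic, which lives in rows $i+1,\dots,j$, as a statistic of the block $x_2$. The same lemma applied to $x_2$ alone shows this equals $H_{a,1}^{b,s}(\RSK^{-1}\circ\ie\circ\RSK(x_2);k)$, so the $Q$-analogue of Lemma~\ref{Lemma:samePTableaux} yields $Q(\widetilde{x}_2) = Q(\RSK^{-1}\circ\ie\circ\RSK(x_2)) = \evac(Q(x_2))$, the final equality holding because $\ie$ acts on the $Q$-tableau as a product of Bender--Knuth involutions, i.e.\ as evacuation.

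The hard part will be the localization step in (a): justifying that the bottom $j$ rows of $\widetilde{x}$ are computed entirely within $[j]\times[s]$ regardless of $x_3$. This is precisely where the file-separation commutation $\Psi \leftrightarrow \ie_{j,s}$ is indispensable, and where I would take care to check that the final application of $\phi$ does not leak the transformation upward into the rows occupied by $x_3$ (only rows of index $\le j$ have all their $\phi$-inputs inside $[j]\times[s]$). Once this reduction is secured, both claims reduce to the chain-shifting reflection of evacuation and the tableau-characterization lemmas already in hand, so I expect the remaining bookkeeping with chain statistics to be routine.
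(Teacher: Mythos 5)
Your proof is correct, and part (a) takes a genuinely different route from the paper's. For (b) you essentially reproduce the paper's argument: factor $\RSK = \Psi \circ \RSK_{j,s}$ with $\Psi = \mathcal{RSK}_{r,s}\circ\mathcal{RSK}_{j,s}^{-1}$ a toggle word strictly left of $(j,s)$, apply Proposition~\ref{Prop:RSKcoordinates} to the splitting $(x_1\concat x_2)\concat x_3$ together with Lemma~\ref{Lemma:placticMonoid}, and note that neither $\Psi$ nor $\ie_{j,s}$ can disturb independence of $\overline Q(x_2)$ in the region weakly left of $(j,s)$. For (a) the paper argues in three lines: $Q(x_1\concat x_2)$ is the restriction of $\RSK(x)$ weakly right of $(j,s)$, so $Q(\widetilde x_2\concat\widetilde x_1)=\evac(Q(x_1\concat x_2))=Q(x_2^*\concat x_1^*)$ by Lemma~\ref{Lemma:evacuationrotates}, and restricting to labels weakly right of $(j-i,s)$ gives $Q(\widetilde x_2)=Q(x_2^*)=\evac(Q(x_2))$. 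You instead localize the conjugated map to $[j]\times[s]$ via the file-separation commutation $\Psi^{-1}\circ\ie_{j,s}\circ\Psi=\ie_{j,s}$, and then identify $Q(\widetilde x_2)$ by matching chain statistics through two applications of the transposed Lemma~\ref{Lemma:evacuationChainShifting2}(b) and the $Q$-version of Lemma~\ref{Lemma:samePTableaux}. Your localization step (correctly flagged as the delicate point) is sound: all toggles of $\RSK_{j,s}^{-1}\circ\ie_{j,s}\circ\RSK_{j,s}$ lie in rows below $j$, and $\phi,\phi^{-1}$ localize on rows $\le j$, so $\widetilde x_3=x_3$ and the bottom block is computed inside $[j]\times[s]$. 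The trade-off is that the paper's route is shorter because Lemma~\ref{Lemma:evacuationrotates} already encodes the fact that evacuation reverses the stacking order, which you re-derive at the level of chain statistics (Lemma~\ref{Lemma:evacuationChainShifting2} is itself proved from Lemma~\ref{Lemma:evacuationrotates}, so the two arguments ultimately rest on the same fact). One caveat: the paper's proof of (b) does more than the statement literally asks --- it also shows that the coordinates strictly right of $(j,s)$ but weakly left of $(j-i,s)$ are independent of $\overline Q(x_2)$, and that the whole assignment is invertible; that extra region is what is actually used later in Lemma~\ref{Lemma:independenceOfQ}. Your argument covers only the region weakly left of $(j,s)$, which matches the statement as written, but to support the downstream application you would need to add the second half of the paper's argument (applying Proposition~\ref{Prop:RSKcoordinates} to $x_2^*\concat x_1^*$ via $\evac(Q(x_1\concat x_2))=Q(x_2^*\concat x_1^*)$).
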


\begin{proof}
For (a), by Theorem~\ref{Th:plGreene} we know $Q(x_1 \concat x_2)$ is the restriction of $Q(x)$ to labels weakly right of $(j,s)$. Applying $\ie_{j,s}$, we find that $Q(\widetilde x_2 \concat \widetilde x_1) = \evac(Q(x_1 \concat x_2)),$ which equals $Q(x_2^* \concat x_1^*)$ by Lemma~\ref{Lemma:evacuationrotates}. Then restricting to the labels weakly right of $(j-i,s)$ gives $Q(\widetilde x_2) = Q(x_2^*) = \evac Q(x_2)$.

For (b), write \[\RSK(\widetilde x)=\ie_{j,s} \circ \RSK(x) = {\ie_{j,s}} \circ T \circ \RSK_{j,s}(x),\] where by Proposition~\ref{Prop:RSKCancellation} $T=\RSK \circ \RSK_{j,s}^{-1}$ is equivalent to a composition of toggles that lie strictly left of $(j,s)$. By Proposition~\ref{Prop:RSKcoordinates}, coordinates of $\RSK_{j,s}(x)$ that lie weakly left of $(j,s)$ depend only on $P(x_1 \concat x_2)$ and $x_3$. By Lemma~\ref{Lemma:placticMonoid} $P(x_1 \concat x_2)$ is a function of $P(x_1)$ and $P(x_2)$, so these coordinates are independent of $\overline Q(x_2)$. Since each toggle in $T$ lies strictly left of $(j,s)$, the result of each of these toggles remains independent of $\overline Q(x_2)$. Since $\ie_{j,s}$ does not change labels weakly left of $(j,s)$, we conclude that the coordinates of $\RSK(\widetilde x)$ that are weakly left of $(j,s)$ are independent of $\overline Q(x_2)$.

Now consider labels that are both weakly right of $(j,s)$ and weakly left of $(j-i,s)$. All coordinates weakly right of $(j,s)$ form the $Q$-tableau $\evac(Q(x_1 \concat x_2)) = Q(x_2^* \concat x_1^*)$. By Proposition~\ref{Prop:RSKcoordinates}, the coordinates of $Q(x_2^* \concat x_1^*)$ that are weakly left of $(j-i,s)$ are functions of $P(x_2^*)$ and $x_1^*$. But both $P(x_2^*)=\evac(P(x_2))$ and $x_1^*$ are functions of $P(x_2)$ and $x_1$ and hence independent of $\overline Q(x_2)$.

This procedure can be inverted using Proposition~\ref{Prop:RSKcoordinates}: $P(x_1 \concat x_2)$ and $x_3$ can be recovered from the coordinates of $\RSK(\widetilde x)$ weakly left of $(j,s)$ as in the first half of the argument, and combining $\evac(P(x_1 \concat x_2)) = P(x_2^* \concat x_1^*)$ with the remaining coordinates weakly left of $(j-i,s)$ yields $P(x_2^*)$ and $x_1^*$ as in the second half.
\end{proof}

\begin{Lemma}
\label{Lemma:independenceOfQ}
Let $x = (x_1 \concat x_2 \concat x_3)$ be a labeling of $R$ such that $(i,s)$ and $(j,s)$ are the maximum elements labeled by $x_1$ and $x_2$ respectively. Suppose that we have one of the following scenarios:
\begin{enumerate}[(a)]
    \item Let $T$ be a composition of toggles strictly left of $(j,s)$, and let \[\widetilde{x} = (\widetilde{x}_1 \concat \widetilde{x}_2 \concat \widetilde{x}_3) = {\RSK^{-1}} \circ T \circ \RSK(x),\] where $(i,s)$ and $(j,s)$ are the maximum elements labeled by $\widetilde{x}_1$ and $\widetilde{x}_2$, respectively.
    \item Let $T$ be a composition of toggles weakly left of $(j,s)$, and let \[\widetilde{x} = (\widetilde{x}_1 \concat \widetilde{x}_2 \concat \widetilde{x}_3) = {\RSK^{-1}} \circ T \circ \idpro \circ \RSK(x),\] where $(i-1,s)$ and $(j-1,s)$ are the maximum elements labeled by $\widetilde{x}_1$ and $\widetilde{x}_2$, respectively.
\end{enumerate}
Then $Q(\widetilde{x}_2) = Q(x_2)$, and $\widetilde{x}_1, P(\widetilde{x}_2)$, and $\widetilde{x}_3$ are independent of $\overline{Q}(x_2)$.
\end{Lemma}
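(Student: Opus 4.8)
The plan is to reduce part (b) to part (a), and then to prove (a) via a localization principle supplemented by a reverse-complement argument. For the reduction I would first rewrite the maps: by the transpose of Lemma~\ref{Lemma:proidRSK} at $(a,b)=(r,s)$ one has $\idpro\circ\RSK=\RSK_{r,s+1}$, so in scenario (b) $\widetilde{x}=\RSK^{-1}\circ T\circ\RSK(x')$ with $x'=\RSK^{-1}\circ\idpro\circ\RSK(x)$. Since a composition of toggles weakly left of $(j,s)$ is exactly a composition of toggles strictly left of $(j-1,s)$, this is precisely the map of scenario (a) at corner $(j-1,s)$ applied to $x'$. Writing $x'=x'_1\concat x'_2\concat x'_3$ with $x'_1,x'_2$ having maxima $(i-1,s),(j-1,s)$, the transpose of Corollary~\ref{Cor:shiftsinonedirection}(b) gives $H^{v,s}_{u,1}(x'_2;k)=H^{v,s}_{u,1}(x_2;k)$ on the middle block, so $Q(x'_2)=Q(x_2)$ and hence $\overline Q(x'_2)=\overline Q(x_2)$. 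Thus part (a) at corner $(j-1,s)$ yields part (b) verbatim.

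For part (a) the backbone is the following localization statement: if one fixes $x_1$, $P(x_2)$, and $x_3$ and lets $\overline Q(x_2)$ vary, then $\RSK(x)$ changes only in the coordinates lying strictly left of $(i,s)$ and strictly right of $(j,s)$. Indeed, the restriction of $Q(x)$ to coordinates weakly right of $(i,s)$ is $Q(x_1)$, which is fixed; and the coordinates weakly left of $(j,s)$ are functions of $P(x_1\concat x_2)$ and $x_3$ by Proposition~\ref{Prop:RSKcoordinates} together with Proposition~\ref{Prop:RSKCancellation}, hence fixed by Lemma~\ref{Lemma:placticMonoid}. Because $T$ consists of toggles strictly left of $(j,s)$, each reading only coordinates weakly left of $(j,s)$—all of which are unaffected by $\overline Q(x_2)$—the coordinates of $\RSK(\widetilde x)=T\circ\RSK(x)$ outside this band remain independent of $\overline Q(x_2)$.

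From here the independence of $\widetilde x_3$ and of $P(\widetilde x_1\concat\widetilde x_2)$ is immediate from Proposition~\ref{Prop:RSKcoordinates} at corner $(j,s)$ applied to $\widetilde x$ (the coordinates of $\RSK_{j,s}(\widetilde x)$ weakly left of $(j,s)$, recovered using Proposition~\ref{miniRSK}, give exactly these data). The preservation of the coordinates weakly right of $(j,s)$ yields $Q(\widetilde x_1\concat\widetilde x_2)=Q(x_1\concat x_2)$, and expressing the $Q$-tableau of the top block of a stacked labeling as a fixed function of the whole block's $Q$-tableau—via Lemma~\ref{Lemma:evacuationrotates} (reverse-complement followed by restriction)—then gives $Q(\widetilde x_2)=Q(x_2)$. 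The remaining pieces $\widetilde x_1$ and $P(\widetilde x_2)$ I would extract by the reverse complement: applying the argument above to $\widetilde x^{*}=\RSK^{-1}\circ(\ee\circ T\circ\ee)\circ\RSK(x^{*})$ turns the bottom block $\widetilde x_1$ into a \emph{top} block, recovered from coordinates disjoint from the dependent band, after which Lemma~\ref{Lemma:evacuationrotates} translates the conclusion back.

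The main obstacle is precisely the bottom block $\widetilde x_1$ (and with it $P(\widetilde x_2)$). Its natural recovery region, the coordinates weakly left of $(i,s)$, overlaps the band strictly left of $(i,s)$ and strictly right of $(j,s)$ in which $\RSK(\widetilde x)$ genuinely depends on $\overline Q(x_2)$; consequently $\widetilde x_1$ cannot be read off directly, which is the precise point where the plactic monoid fails to be cancellative. The reverse complement circumvents this because a top block is recovered from coordinates weakly left of a corner lying strictly above the dependent band, as in the clean recovery of $\widetilde x_3$. The delicate verification is that $\ee\circ T\circ\ee$ remains confined strictly left of the corner below this top block, so that the $\widetilde x_3$-style recovery applies; this is the general-$T$ analogue of Proposition~\ref{Prop:evacCoordinates}, which already carries out exactly this recovery of a bottom block in the special case $T=\ie_{j,s}$.
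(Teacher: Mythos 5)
Your localization principle for part (a) is sound, and the pieces it delivers---the independence of $\widetilde x_3$ and of $P(\widetilde x_1 \concat \widetilde x_2)$, and the identity $Q(\widetilde x_2)=Q(x_2)$ via $Q(\widetilde x_1 \concat \widetilde x_2)=Q(x_1 \concat x_2)$---are argued correctly and in the same spirit as the paper. But both of the remaining steps have genuine gaps. The reduction of (b) to (a) is circular: applying (a) to $x'=\RSK^{-1}\circ\idpro\circ\RSK(x)$ expresses $\widetilde x_1$, $P(\widetilde x_2)$, $\widetilde x_3$ as functions of $x'_1$, $P(x'_2)$, $x'_3$, and to conclude independence of $\overline{Q}(x_2)$ you still need $(x'_1,P(x'_2),x'_3)$ to be a function of $(x_1,P(x_2),x_3)$. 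That is precisely the $T=\mathrm{id}$ case of (b), and it cannot come from (a): $\idpro$ is a composition of toggles strictly \emph{right} of $(r,s)$, which act in and around the band of files between $(j,s)$ and $(i,s)$ where $\RSK(x)$ genuinely depends on $\overline{Q}(x_2)$. Knowing $\overline{Q}(x'_2)=\overline{Q}(x_2)$ does not repair this. The paper instead treats (b) directly via the commutation $\ie_{j-1,s}\circ T\circ\idpro = T\circ L\circ\ie_{j,s}$, where $L$ is the part of $\idpro$ weakly left of $(j,s)$; the change of partial evacuation from $\ie_{j,s}$ to $\ie_{j-1,s}$ absorbs exactly the portion of $\idpro$ your reduction cannot control.

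The second gap is the recovery of $\widetilde x_1$ and $P(\widetilde x_2)$, which you rightly flag as the crux. Conjugating by the full evacuation does not work: $\ee\circ T\circ\ee$ is not equivalent to a composition of toggles confined to any half-plane, since $\ee$ toggles throughout $R$ and $T$ fails to commute with the portion of $\ie$ lying in the files between $(j,s)$ and $(i,s)$; at the level of maps the conjugation only restates $\widetilde x^{*}=\bigl(\RSK^{-1}\circ T\circ\RSK(x)\bigr)^{*}$, so the ``$\widetilde x_3$-style recovery'' cannot be applied to $\widetilde x^{*}$. The paper's device is to conjugate by the \emph{partial} evacuation $\ie_{j,s}$ alone. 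Then Proposition~\ref{Prop:evacCoordinates} supplies an invertible dictionary between $(x_1,P(x_2),x_3)$ and the coordinates of $\ie_{j,s}\circ\RSK(x)$ weakly left of $(j-i,s)$, while $T$ requires no conjugation at all: being weakly left of $(j,s)$, it is automatically strictly left of $(j-i,s)$, so it preserves both the dictionary region and the coordinates weakly right of $(j-i,s)$ carrying $\evac(Q(x_2))$. Undoing the partial evacuation with $\ie_{j,s}$ (or $\ie_{j-1,s}$ in case (b)) and invoking Proposition~\ref{Prop:evacCoordinates} again then yields $\widetilde x_1$, $P(\widetilde x_2)$, and $\widetilde x_3$ at once. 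This single mechanism closes both gaps simultaneously, which is why the paper proves (a) and (b) by the same commuting diagram rather than reducing one to the other.
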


\begin{proof}
We will prove (b) (the proof of (a) is similar). Let
\[y = \RSK^{-1} \circ \ie_{j,s} \circ \RSK(x) \quad \text{and} \quad \widetilde{y} = \RSK^{-1} \circ \ie_{j-1,s} \circ \RSK(\widetilde{x}).\]
Here $y = (y_2 \concat y_1 \concat y_3)$ and $\widetilde{y} = (\widetilde{y}_2 \concat \widetilde{y}_1 \concat \widetilde{y}_3)$ where $y_i$ and $x_i$ have the same dimensions, as do $\widetilde{y}_i$ and $\widetilde{x}_i$. Let $L$ be the composition of toggles in $\idpro$ that lie weakly left of $(j,s)$, so that $\idpro = L \circ \idpro_{j,s}$. We claim that the diagram in Figure~\ref{fig:diagram} commutes. Indeed, by Observation~\ref{Obs:togglesCommute}
\begin{align*}
    {\ie_{j-1,s}} \circ T \circ \idpro &= {\ie_{j-1,s}} \circ T \circ L \circ \idpro_{j,s}\\
    &= T \circ L \circ \ie_{j-1,s} \circ \idpro_{j,s}\\
    &= T \circ L \circ \ie_{j,s}.
\end{align*}


By Proposition~\ref{Prop:evacCoordinates}, $Q(y_2) = \evac(Q(x_2))$, while the entries weakly left of $(j-i,s)$ in $\RSK(y)$ are independent of $\overline Q(x_2)$. Toggles in $T$ and $L$ are all strictly left of $(j-i,s)$, so the labels weakly left of $(j-i,s)$ in $\RSK(\widetilde y)$ are still independent of $\overline Q(x_2)$, and $Q(\widetilde y_2) = Q(y_2)$. The proof is completed by an application of Proposition~\ref{Prop:evacCoordinates} since $\widetilde{x} = \RSK^{-1} \circ \ie_{j-1,s} \circ \RSK(\widetilde{y})$.
\end{proof}

\begin{figure}
\centering
\begin{tikzpicture}[scale = 0.95]
\node at (0,0) {$\RSK(x)$};
\node at (5,0) {$\RSK(\widetilde{x})$};

\node at (0,-2) {$\RSK(y)$};
\node at (5,-2) {$\RSK(\widetilde{y})$};

\draw [->] (1,0)--(4,0);
\draw [->] (1,-2)--(4,-2);

\draw [->] (0,-0.5)--(0,-1.5);
\draw [->] (5,-0.5)--(5,-1.5);

\node at (2.5,0.3) {$T \circ \idpro$};
\node at (2.5,-1.7) {$T \circ L$};

\node at (0.85,-1) {$\ie_{j,s}$};
\node at (6.05,-1) {$\ie_{j-1,s}$};

\end{tikzpicture}
\caption{The commuting diagram in the proof of Lemma~\ref{Lemma:independenceOfQ}}
\label{fig:diagram}
\end{figure}

\begin{Prop} \label{Prop:maps}
The maps 
\[\phi \circ \rho^{-1} \circ \phi^{-1}, \, \RSK^{-1} \circ \proid \circ \RSK,\text{ and } \RSK^{-1} \circ \idpro \circ \RSK\]
have the required form for Lemma~\ref{Lemma:independenceOfQ}.
\end{Prop}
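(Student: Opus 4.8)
The plan is to exhibit each of the three maps in one of the two shapes appearing in the hypotheses of Lemma~\ref{Lemma:independenceOfQ}, taking the ambient rectangle to be $R=[r]\times[s]$ and writing $(j,s)$ for the maximum box of the middle block $x_2$, so that $j\le r$. The only inputs I expect to need are the file descriptions of $\proid$ and $\proid_{i,j}$, the relation $\RSK_{r,s+1}=\idpro\circ\RSK$ coming from the transpose of Corollary~\ref{cor:omega}, and Observation~\ref{Obs:togglesCommute}.

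Two of the three maps are immediate. The map $\RSK^{-1}\circ\idpro\circ\RSK$ already has the form of Lemma~\ref{Lemma:independenceOfQ}(b) with $T$ the empty toggle word, so nothing is to be checked. For $\RSK^{-1}\circ\proid\circ\RSK$ I would take $T=\proid$ and match the form of Lemma~\ref{Lemma:independenceOfQ}(a): the toggles of $\proid$ occupy the files $F_{r-s+1},\dots,F_{r-1}$, and since $(j,s)$ lies on the file $F_{j-s}$ with $j\le r$, every such index satisfies $r-s+1>r-s\ge j-s$, so $\proid$ consists of toggles strictly left of $(j,s)$, exactly as required.

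The real content is the rowmotion map $\phi\circ\rho^{-1}\circ\phi^{-1}$, which I expect to be the main obstacle. The first step is to record the identity
\[\phi\circ\rho^{-1}\circ\phi^{-1}=\RSK^{-1}\circ\RSK_{r+1,s+1}.\]
This follows by comparing the diagonal products defining $\mathcal{RSK}_{r,s}$ and $\mathcal{RSK}_{r+1,s+1}$: the latter is the former with the single extra factor $\rho^{-1}_{r,s}$ appended, so $\rho^{-1}=\rho^{-1}_{r,s}=\mathcal{RSK}_{r,s}^{-1}\circ\mathcal{RSK}_{r+1,s+1}$, and conjugating by $\phi$ (using $\mathcal{RSK}_{r,s}=\RSK\circ\phi$ and $\mathcal{RSK}_{r+1,s+1}=\RSK_{r+1,s+1}\circ\phi$) cancels the transfer maps. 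The second step is to peel $\RSK_{r+1,s+1}$ apart as
\[\RSK_{r+1,s+1}=\proid_{r,s+1}\circ\RSK_{r,s+1}=\proid_{r,s+1}\circ\idpro\circ\RSK,\]
where the first equality is the up-diagonal peeling of Lemma~\ref{Lemma:proidRSK} at $(a,b)=(r,s+1)$ and the second is the transpose of Corollary~\ref{cor:omega}. This presents $\phi\circ\rho^{-1}\circ\phi^{-1}$ in the form of Lemma~\ref{Lemma:independenceOfQ}(b) with $T=\proid_{r,s+1}$. To finish I would verify the locality of $T$: the toggles of $\proid_{r,s+1}$ lie in the files $F_{r-s},\dots,F_{r-1}$, each of index $\ge r-s\ge j-s$, so they are weakly left of $(j,s)$, as Lemma~\ref{Lemma:independenceOfQ}(b) demands.

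The one delicate point, and where I would be most careful, is that Lemma~\ref{Lemma:proidRSK} is stated for $(a,b)\in R$, whereas I apply it at $(a,b)=(r,s+1)$, which sits just outside $R$. I would justify this by observing that the argument proving Lemma~\ref{Lemma:proidRSK} uses only Observation~\ref{Obs:togglesCommute} and the convention $\rho^{-1}_{0,\cdot}=\mathrm{id}$, and that at $(r,s+1)$ every toggle that appears already lies inside $R$: any box on a file of index at least $r-s$ whose row is at most $r$ automatically has column at most $s$. Hence the peeling identity holds verbatim, and beyond this file bookkeeping no genuinely new computation is needed.
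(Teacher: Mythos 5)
Your argument is correct, and it lands on exactly the same decomposition the paper uses --- $T \circ \idpro$ with $T = \rho_{F_{r-s}} \circ \proid = \proid_{r,s+1}$ --- but reaches it by a genuinely different route. The paper's proof handles the nontrivial case $\phi \circ \rho^{-1} \circ \phi^{-1}$ by conjugation: Proposition~\ref{Prop:conjugate} and Lemma~\ref{Lemma:Einmapsweknow} give $\phi \circ \rho^{-1} \circ \phi^{-1} = \RSK^{-1} \circ \ie \circ \swpro \circ \ie \circ \RSK$, and then the factorization $\swpro = (\idpro)^{-1} \circ \rho_{F_{r-s}} \circ \proid$ together with Observation~\ref{Obs:togglesCommute} cancels the two copies of $\ie_{r-1,s}$, leaving $\rho_{F_{r-s}} \circ \proid \circ \idpro$. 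You bypass Striker--Williams promotion and evacuation entirely: the identity $\rho^{-1} = \mathcal{RSK}_{r,s}^{-1} \circ \mathcal{RSK}_{r+1,s+1}$ is immediate from the diagonal definition of $\mathcal{RSK}_{a,b}$, and two applications of the peeling identity of Lemma~\ref{Lemma:proidRSK} (one in each direction) yield $\RSK_{r+1,s+1} = \proid_{r,s+1} \circ \idpro \circ \RSK$. What your approach buys is independence from the machinery of Section~5.2, so Proposition~\ref{Prop:maps} could be proved before $\swpro$ is even introduced; what it costs is the need to invoke Lemma~\ref{Lemma:proidRSK} at the boundary point $(a,b)=(r,s+1) \notin R$. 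You correctly flag this as the delicate step, and your justification is sound: every toggle appearing in the peeling at that index already lies in $R$ (indeed $\proid_{r,s+1} = \rho_{F_{r-s}} \circ \proid$ toggles precisely the elements weakly left of $(r,s)$), and the paper already admits indices $a=r+1$, $b=s+1$ in the definition of $\RSK_{a,b}$. The treatment of the other two maps and the file bookkeeping $r-s+1 > r-s \ge j-s$ are both correct and match the paper's (implicit) handling of those cases.
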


\begin{proof}
We consider $\phi \circ \rho^{-1} \circ \phi^{-1}$, which is the only nontrivial case. By Proposition~\ref{Prop:conjugate}
\[\phi \circ \rho^{-1} \circ \phi^{-1} = \RSK^{-1} \circ \ie \circ \swpro \circ \ie \circ \RSK.\]
We will express $\ie \circ \swpro \circ \ie$ as $T \circ \idpro$ for some composition $T$ of toggles weakly left of $(r,s)$. Recall that $\swpro = \left(\idpro\right)^{-1} \circ \rho_F \circ \proid$, where $F$ is the file containing $(r,s)$. Then
\begin{align*}
    \ie \circ \swpro \circ \ie &= \ie \circ \left(\idpro\right)^{-1} \circ \rho_F \circ \proid \circ \ie\\
    &={\ie_{r-1,s}} \circ \rho_F \circ {\proid} \circ (\ie_{r-1,s} \circ \idpro).
\end{align*}
By Observation~\ref{Obs:togglesCommute} the instances of $\ie_{r-1,s}$ commute with $\rho_F \circ \proid$ and cancel, yielding
\[\rho_F \circ \proid \circ \idpro = T \circ \idpro,\]
where $T = \rho_F \circ \proid$ contains only toggles weakly left of $(r,s)$.
\end{proof}

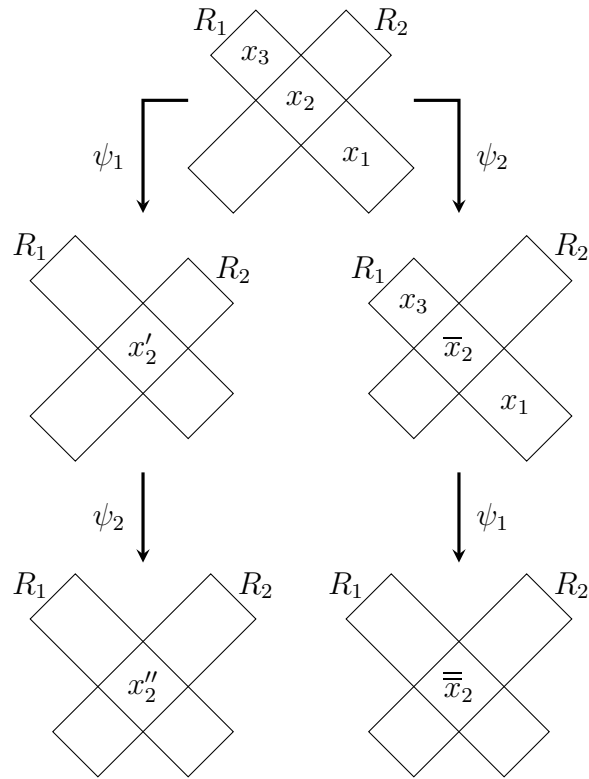
\begin{figure}
    \centering
    \begin{tikzpicture}[scale=0.3]
    
    \node at (-4,9.5) {$R_1$};
    \node at (4,9.5) {$R_2$};
    \node at (2.5,3.5) {$x_1$};
    \node at (0,6) {$x_2$};
    \node at (-2,8) {$x_3$};
    \draw (3,1)--(5,3)--(-2,10)--(-4,8)--cycle;
    \draw (-3,1)--(-5,3)--(2,10)--(4,8)--cycle;
    
    \node at (-8.5,3.5) {$\psi_1$};
    \draw [-stealth, very thick] (-5,6)--(-7,6)--(-7,1);
    
    \begin{scope}[shift={(-8,-10)}]
        \node at (-4,9.5) {$R_1$};
        \node at (5,8.5) {$R_2$};
        \node at (1,5) {$x_2'$};
        \draw (3,1)--(5,3)--(-2,10)--(-4,8)--cycle;
        \draw (-2,0)--(-4,2)--(3,9)--(5,7)--cycle;
    \end{scope}
    
    \node at (-8.5,-12.5) {$\psi_2$};
    \draw [-stealth,very thick] (-7,-10.5)--(-7,-14.5);
    
    \begin{scope}[shift={(-7,-24)}]
        \node at (-5,8.5) {$R_1$};
        \node at (5,8.5) {$R_2$};
        \node at (0,4) {$x_2''$};
        \draw (2,0)--(4,2)--(-3,9)--(-5,7)--cycle;
        \draw (-2,0)--(-4,2)--(3,9)--(5,7)--cycle;
    \end{scope}
    
    \node at (8.5,3.5) {$\psi_2$};
    \draw [-stealth, very thick] (5,6)--(7,6)--(7,1);
    
    \begin{scope}[shift={(8,-10)}]
        \node at (-5,8.5) {$R_1$};
        \node at (4,9.5) {$R_2$};
        \node at (1.5,2.5) {$x_1$};
        \node at (-1,5) {$\overline{x}_2$};
        \node at (-3,7) {$x_3$};
        \draw (2,0)--(4,2)--(-3,9)--(-5,7)--cycle;
        \draw (-3,1)--(-5,3)--(2,10)--(4,8)--cycle;
    \end{scope}
    
    \node at (8.5,-12.5) {$\psi_1$};
    \draw [-stealth,very thick] (7,-10.5)--(7,-14.5);
    
    \begin{scope}[shift={(7,-24)}]
        \node at (-5,8.5) {$R_1$};
        \node at (5,8.5) {$R_2$};
        \node at (0,4) {$\overline{\overline{x}}_2$};
        \draw (2,0)--(4,2)--(-3,9)--(-5,7)--cycle;
        \draw (-2,0)--(-4,2)--(3,9)--(5,7)--cycle;
    \end{scope}
    \end{tikzpicture}
    \caption{Labelings associated to applications of $\psi_1$ and $\psi_2$ on maximal rectangles $R_1$ and $R_2$ of a moon polyomino. Since the maps commute, $x_2'' = \overline{\overline{x}}_2$.}
    \label{fig:commutation}
\end{figure}

We are now ready to prove Theorem~\ref{Th:commutation}. The theorem is nearly immediate from Lemma~\ref{Lemma:independenceOfQ} and is similar to the proof in Rubey \cite{rubey} from this point. 

\begin{proof}[Proof of Theorem~\ref{Th:commutation}]

Consider the diagram in Figure~\ref{fig:commutation}. It suffices to consider only the moon polyomino $R_1 \cup R_2$ since all coordinates outside of this shape are preserved. By Lemma~\ref{Lemma:independenceOfQ} and Proposition~\ref{Prop:maps}, $P(x_2')$ is a function of $x_1$, $P(x_2)$ and $x_3$; and $P(\overline{\overline{x}}_2)$ is the same function of $x_1$, $P(\overline{x}_2)$, and $x_3$. But by Corollary~\ref{Cor:shiftsinonedirection}, $P(x_2) = P(\overline{x}_2)$ and so $P(x_2') = P(\overline{\overline{x}}_2)$. By Corollary~\ref{Cor:shiftsinonedirection} again, $P(x_2'') = P(x_2')$, and so $P(x_2'') = P(\overline{\overline{x}}_2)$. Similarly $Q(x_2'') = Q(\overline{\overline{x}}_2)$, so we must have $x_2''=\overline{\overline{x}}_2$. The other four sections of the moon polyomino can be treated similarly.
\end{proof}

\bibliographystyle{acm}
\bibliography{citations}

\begin{thebibliography}{10}

\bibitem{brouwerschrijver}
{\sc Brouwer, A.~E., and Schrijver, A.}
\newblock {\em On the period of an operator, defined on antichains}.
\newblock Mathematisch Centrum, Afdeling Zuivere Wiskunde ZW 24/74.
  Mathematisch Centrum, Amsterdam, 1974.

\bibitem{cameronfonderflaass}
{\sc Cameron, P.~J., and Fon-Der-Flaass, D.~G.}
\newblock Orbits of antichains revisited.
\newblock {\em European J. Combin. 16}, 6 (1995), 545--554.

\bibitem{danilovkoshevoy}
{\sc Danilov, V.~I., and Koshevoy, G.~A.}
\newblock The octahedron recurrence and {RSK}-correspondence.
\newblock {\em S\'{e}m. Lothar. Combin. 54A\/} (2005/07), Art. B54An, 16.

\bibitem{einsteinpropp2}
{\sc Einstein, D., and Propp, J.}
\newblock Piecewise-linear and birational toggling.
\newblock In {\em 26th {I}nternational {C}onference on {F}ormal {P}ower
  {S}eries and {A}lgebraic {C}ombinatorics ({FPSAC} 2014)}, Discrete Math.
  Theor. Comput. Sci. Proc., AT. Assoc. Discrete Math. Theor. Comput. Sci.,
  Nancy, 2014, pp.~513--524.

\bibitem{einsteinpropp1}
{\sc Einstein, D., and Propp, J.}
\newblock Combinatorial, piecewise-linear, and birational homomesy for products
  of two chains.
\newblock {\em Algebr. Comb. 4}, 2 (2021), 201--224.

\bibitem{fonderflaass}
{\sc Fon-Der-Flaass, D.~G.}
\newblock Orbits of antichains in ranked posets.
\newblock {\em European J. Combin. 14}, 1 (1993), 17--22.

\bibitem{grinbergroby2}
{\sc Grinberg, D., and Roby, T.}
\newblock Iterative properties of birational rowmotion {II}: rectangles and
  triangles.
\newblock {\em Electron. J. Combin. 22}, 3 (2015), Paper 3.40, 49.

\bibitem{grinbergroby1}
{\sc Grinberg, D., and Roby, T.}
\newblock Iterative properties of birational rowmotion {I}: generalities and
  skeletal posets.
\newblock {\em Electron. J. Combin. 23}, 1 (2016), Paper 1.33, 40.

\bibitem{grinbergroby3}
{\sc Grinberg, D., and Roby, T.}
\newblock Birational rowmotion on a rectangle over a noncommutative ring.
\newblock \url{https://arxiv.org/abs/2208.11156}, 2022.

\bibitem{grotschellovaszschrijver}
{\sc Gr\"{o}tschel, M., Lov\'{a}sz, L., and Schrijver, A.}
\newblock {\em Geometric algorithms and combinatorial optimization},
  second~ed., vol.~2 of {\em Algorithms and Combinatorics}.
\newblock Springer-Verlag, Berlin, 1993.

\bibitem{hopkins}
{\sc Hopkins, S.}
\newblock {RSK} via local transformations, 2014.
\newblock \url{https://www-users.cse.umn.edu/~shopkins/docs/rsk.pdf}.

\bibitem{johnsonliu}
{\sc Johnson, J., and Liu, R.~I.}
\newblock Birational rowmotion and the octahedron recurrence.
\newblock \url{https://arxiv.org/abs/2204.04255}, 2022.

\bibitem{josephroby2}
{\sc Joseph, M., and Roby, T.}
\newblock Birational and noncommutative lifts of antichain toggling and
  rowmotion.
\newblock {\em Algebr. Comb. 3}, 4 (2020), 955--984.

\bibitem{josephroby1}
{\sc Joseph, M., and Roby, T.}
\newblock {A birational lifting of the Stanley-Thomas word on products of two
  chains}.
\newblock {\em {Discrete Mathematics \& Theoretical Computer Science} {vol. 23
  no. 1}\/} (Aug. 2021).

\bibitem{kirillov}
{\sc Kirillov, A.~N.}
\newblock Introduction to tropical combinatorics.
\newblock In {\em Physics and combinatorics, 2000 ({N}agoya)}. World Sci.
  Publ., River Edge, NJ, 2001, pp.~82--150.

\bibitem{krattenthaler}
{\sc Krattenthaler, C.}
\newblock Growth diagrams, and increasing and decreasing chains in fillings of
  {F}errers shapes.
\newblock {\em Adv. in Appl. Math. 37}, 3 (2006), 404--431.

\bibitem{musikerroby}
{\sc Musiker, G., and Roby, T.}
\newblock Paths to understanding birational rowmotion on products of two
  chains.
\newblock {\em Algebr. Comb. 2}, 2 (2019), 275--304.

\bibitem{noumiyamada}
{\sc Noumi, M., and Yamada, Y.}
\newblock Tropical {R}obinson-{S}chensted-{K}nuth correspondence and birational
  {W}eyl group actions.
\newblock In {\em Representation theory of algebraic groups and quantum
  groups}, vol.~40 of {\em Adv. Stud. Pure Math.} Math. Soc. Japan, Tokyo,
  2004, pp.~371--442.

\bibitem{pak}
{\sc Pak, I.}
\newblock Hook length formula and geometric combinatorics.
\newblock {\em S\'{e}m. Lothar. Combin. 46\/} (2001/02), Art. B46f, 13.

\bibitem{propproby}
{\sc Propp, J., and Roby, T.}
\newblock Homomesy in products of two chains.
\newblock {\em Electron. J. Combin. 22}, 3 (2015), Paper 3.4, 29.

\bibitem{rubey}
{\sc Rubey, M.}
\newblock Increasing and decreasing sequences in fillings of moon polyominoes.
\newblock {\em Adv. in Appl. Math. 47}, 1 (2011), 57--87.

\bibitem{sagan}
{\sc Sagan, B.~E.}
\newblock {\em The symmetric group}, second~ed., vol.~203 of {\em Graduate
  Texts in Mathematics}.
\newblock Springer-Verlag, New York, 2001.
\newblock Representations, combinatorial algorithms, and symmetric functions.

\bibitem{stanley3}
{\sc Stanley, R.~P.}
\newblock {\em Ordered structures and partitions}.
\newblock Memoirs of the American Mathematical Society, No. 119. American
  Mathematical Society, Providence, R.I., 1972.

\bibitem{stanley2}
{\sc Stanley, R.~P.}
\newblock Two poset polytopes.
\newblock {\em Discrete Comput. Geom. 1}, 1 (1986), 9--23.

\bibitem{strikerwilliams}
{\sc Striker, J., and Williams, N.}
\newblock Promotion and rowmotion.
\newblock {\em European J. Combin. 33}, 8 (2012), 1919--1942.

\bibitem{thomaswilliams}
{\sc Thomas, H., and Williams, N.}
\newblock Rowmotion in slow motion.
\newblock {\em Proc. Lond. Math. Soc. (3) 119}, 5 (2019), 1149--1178.

\end{thebibliography}

\end{document}